\definecolor{egraf}{rgb}{0.2,0.4,0}
\newcommand{\abs}[1]{\left\vert#1\right\vert}
\newcommand{\set}[1]{\left\{#1\right\}}
\newcommand\restr[2]{{
    \left.\kern-\nulldelimiterspace 
      #1 
      \littletaller 
    \right|_{#2} 
  }}
\newcommand{\littletaller}{\mathchoice{\vphantom{\big|}}{}{}{}}
\newcommand{\diam}{\mathop{\mathrm{diam}}\nolimits}
\newcommand{\norm}[1]{\left\Vert#1\right\Vert}
\newcommand{\duality}[1]{\left\langle#1\right\rangle}
\newcommand{\csp}[1]{\overline{\mathrm{span}}^{#1}}
\newcommand{\linsp}{{\mathrm{span}}}
\newcommand{\Lip}{{\mathrm{Lip}}}
\newcommand{\supp}{\mathop\mathrm{supp}}
\newcommand{\ext}{\mathop\mathrm{ext}}
\newcommand{\eps}{\varepsilon}
\DeclareMathOperator{\conv}{conv}
\DeclareMathOperator{\clco}{\overline{conv}}
\newcommand{\lipfree}[1]{\mathcal{F}({#1})}                 
\newcommand{\lipnorm}[1]{\norm{#1}_L}                       
\newcommand{\cl}[1]{\overline{#1}}                          
\DeclareMathOperator{\lip}{lip}                             
\newcommand{\N}{\mathbb N}
\newcommand{\R}{\mathbb R}
\newcommand{\RNP}{Radon--Nikod\'{y}m property\xspace}
\newcommand{\nnorm}[1]{\left\vvvert#1\right\vvvert}
\theoremstyle{plain}
\newtheorem{thm}{Theorem}[section]
\newtheorem{cor}[thm]{Corollary}
\newtheorem{lem}[thm]{Lemma}
\newtheorem{prop}[thm]{Proposition}
\newtheorem*{thm*}{Theorem}
\newtheorem*{prop*}{Proposition}
\theoremstyle{definition}
\newtheorem{defn}[thm]{Definition}
\newtheorem{rem}[thm]{Remark}
\newtheorem{example}[thm]{Example}
\newtheorem{quest}[thm]{Question}
\author[T.~A.~Abrahamsen]{Trond A.~Abrahamsen}
\address[T.~A.~Abrahamsen]{Department of Mathematics, University of
  Agder, Postboks 422, 4604 Kristiansand, Norway.}
\email{trond.a.abrahamsen@uia.no}
\urladdr{http://home.uia.no/trondaa/index.php3}
\author[R.~J.~Aliaga]{Ram\'on J. Aliaga}
\address[R.~J.~Aliaga]{Instituto Universitario de Matem\'atica Pura y Aplicada,
Universitat Polit\`ecnica de Val\`encia,
Camino de Vera S/N,
46022 Valencia, Spain}
\email{raalva@upv.es}
\author[V.~Lima]{Vegard Lima}
\address[V.~Lima]{Department of Engineering Sciences, University of Agder,
Postboks 509, 4898 Grimstad, Norway.}
\email{vegard.lima@uia.no}
\author[A.~Martiny]{Andr\'e Martiny}
\address[A.~Martiny]{Department of Mathematics, University of
  Agder, Postboks 422, 4604 Kristiansand, Norway.}
\email{andre.martiny@uia.no}
\author[Y.~Perreau]{Yoël Perreau}
\address[Y.~Perreau]{University of Tartu, Institute of Mathematics and Statistics, Narva mnt 18, 51009 Tartu
  linn, Estonia}
\email{yoel.perreau@ut.ee}
\author[A.~Prochazka]{Anton\'in Prochazka}
\address[A.~Prochazka]{Université de Franche-Comté,
Laboratoire de mathématiques de Besançon,
UMR CNRS 6623,
16 route de Gray,
25000 Besançon,
France}
\email{antonin.prochazka@univ-fcomte.fr}
\author[T.~Veeorg]{Triinu Veeorg}
\address[T.~Veeorg]{University of Tartu, Institute of Mathematics and Statistics, Narva mnt 18, 51009 Tartu
  linn, Estonia}
\email{triinu.veeorg@ut.ee}
\title{Delta-points and their implications for the geometry of Banach spaces}
\begin{document}

\begin{abstract}
  We show that the Lipschitz-free space with the
  Radon--Nikod\'{y}m property
  and a Daugavet point recently constructed by Veeorg
  is in fact a dual space isomorphic to $\ell_1$.
  Furthermore, we answer an open problem from the literature
  by showing that there exists a superreflexive
  space, in the form of a renorming of $\ell_2$, with a
  $\Delta$-point.
  Building on these two results, we are able to renorm
  every infinite-dimensional Banach space to have a $\Delta$-point.

  Next, we establish powerful relations between existence of $\Delta$-points in
  Banach spaces and their duals. As an application, we obtain sharp results about
  the influence of $\Delta$-points for the asymptotic geometry of Banach spaces.
  In addition, we prove that if $X$ is a Banach space with a shrinking
  $k$-unconditional basis with $k < 2$, or if $X$ is a Hahn--Banach
  smooth space with a dual satisfying the Kadets--Klee property,
  then $X$ and its dual $X^*$ fail to contain $\Delta$-points. In particular, we get
  that no Lipschitz-free space with a Hahn--Banach smooth predual
  contains $\Delta$-points.
  
  Finally we present a purely metric characterization of the molecules
  in Lipschitz-free spaces that are $\Delta$-points, and we solve an
  open problem about representation of finitely supported
  $\Delta$-points in Lipschitz-free spaces.
\end{abstract}
\maketitle
\markleft{\textsc{Abrahamsen et al.}}

\section{Introduction}
\label{sec:introduction}
Let $X$ be a (real) Banach space with unit ball $B_X$, unit sphere $S_X$
and topological dual $X^*$.
For $x \in S_X$ we will write
\begin{equation*}
  D(x) := \set{x^* \in S_{X^*} : x^*(x) = 1}.
\end{equation*}
A \emph{slice} of a non-empty, bounded and convex subset $C$ of $X$
is a non-empty intersection of $C$ with an open half-space of $X$.
Thus a slice of $B_X$ can be written
\begin{equation*}
  S(B_X,x^*,\varepsilon) = \set{ y \in B_X : x^*(y) > 1 - \varepsilon}
\end{equation*}
where $x^* \in S_{X^*}$ and $\varepsilon > 0$.
We usually omit $B_X$ and write $S(x^*,\varepsilon)$ instead
of $S(B_X,x^*,\varepsilon)$ when it is clear from the
context what set is being sliced.
If $X$ is a dual space and the defining functional $x^*$
is in the predual of $X$, then we call the slice a weak$^*$-slice.

The main characters in our story are pointwise versions of
the well-known Daugavet property,
the slightly less known space with bad projections,
and the more obscure property $\mathfrak{D}$.
We start by recalling the definitions from \cite{AHLP}
and \cite{MPRZ}.

\begin{defn}\label{defn:slice_diametral_points}
  Let $X$ be a Banach space, and let $x\in S_X$.
  We say that
  \begin{enumerate}
  \item
    $x$ is a \emph{Daugavet point} if
    $\sup_{y \in S} \norm{x - y} = 2$ for every slice $S$ of $B_X$.
  \item
    $x$ is a \emph{$\Delta$-point} if
    $\sup_{y \in S}\norm{x - y} = 2$ for every slice $S$ of $B_X$
    with $x \in S$.
  \item $x$ is a \emph{$\mathfrak{D}$-point} if
    $\sup_{y \in S}\norm{x - y} = 2$ for every slice
    $S = S(x^*,\varepsilon)$ of $B_X$ with $x^* \in D(x)$
    and $\varepsilon > 0$.
  \item
    $x$ is a \emph{super $\Delta$-point}
    if $\sup_{y \in W}\norm{x - y} = 2$ for every
    relatively weakly open subset $W$ of $B_X$ with $x \in W$.
  \end{enumerate}
\end{defn}
For dual spaces we will also consider the natural weak$^*$ versions of
Daugavet and $\Delta$-points where we simply replace
the phrase ``every slice $S$'' in the definition with
``every weak$^*$-slice $S$'',
and replace ``weakly open'' by ``weak$^*$ open'' for super $\Delta$-points.

Note that if $X$ is a subspace of a Banach space $Y$
and $x \in S_X$ is a $\Delta$-point, $\mathfrak{D}$-point,
or super $\Delta$-point, then $x$ is still such a point
regarded as an element in $Y$.
This is not the case for Daugavet points as can be
seen by regarding $C[0,1]$ as a subspace of $C[0,1] \oplus_2 C[0,1]$
(this is Example~4.7 from \cite{AHLP}).

In Sections~\ref{sec:separable-dual-space}
through~\ref{sec:renorm-Banach-spaces} we show that Daugavet points,
super $\Delta$-points and $\Delta$-points can exist in some well
behaved Banach spaces.
The role played by $\mathfrak{D}$-points will be more negative,
in the sense that spaces with certain properties do not
even admit a $\mathfrak{D}$-point.

We start the paper with a study of the metric space $\mathcal{M}$
constructed by Veeorg in \cite{VeeorgStudia}.
The Lipschitz-free space $\lipfree{\mathcal{M}}$ was the first example of
a Banach space with the \RNP admitting a Daugavet point.
We will show that in fact $\lipfree{\mathcal{M}}$ is
isomorphic to $\ell_1$, and is isometrically a dual space.
Thus there exits a separable dual space with a Daugavet point
(see Theorem~\ref{thm:veeorg_space}).

It was shown in \cite[Corollary~6.10]{ALMP} that finite dimensional
Banach spaces do not admit $\Delta$-points, and it was asked
if the same holds for superreflexive spaces in \cite[Question~6.1]{ALMP}
and \cite[Question~7.7]{MPRZ}.
In Section~\ref{sec:superr-banach-space} we answer this question
negatively. We modify a renorming of $\ell_2$ from \cite{DKR+}
to show that $\ell_2$ can be renormed so that both $\ell_2$
and its dual have a super $\Delta$-point.
These super $\Delta$-points are not Daugavet points as there are
strongly exposed bits of the original unit ball which are still left
in the new unit ball and which are at distance strictly less than 2 to them. 
We also provide a positive answer to \cite[Question~7.12]{MPRZ} by proving that those super $\Delta$-points
actually belong to the closure of the set of strongly exposed-points for the new norm.

In Section~\ref{sec:renorm-Banach-spaces} we combine
the ideas and results from the previous two sections and show that
any infinite dimensional Banach space admits a renorming
with a $\Delta$-point.
Spaces failing the Schur property (and in particular
spaces which do not contain a copy of $\ell_1$) always contain
a normalized weakly null basic sequence, and using this as a starting point,
we can adapt the (dual) renorming of $\ell_2$ from Section~\ref{sec:superr-banach-space}
to get a renorming with a super $\Delta$-point.
For spaces containing $\ell_1$, the $\ell_1$ isomorphism from 
Section~\ref{sec:separable-dual-space}  together with a classic norm extension result
will allow for a renorming with a $\Delta$-point.

Having established that Daugavet points and $\Delta$-points
are very much isometric notions we go looking for
conditions that will prevent the existence of
such points in a Banach space.
Let us mention that it is known that neither
uniformly non-square \cite[Corollary~2.4]{ALMP},
asymptotically uniformly smooth Banach spaces \cite[Theorem~3.7]{ALMP}
nor real (GM) polyhedral spaces \cite[Corollary~3.8]{MR4405563}
contain $\Delta$-points.
It is obvious that no strongly exposed point can be a
$\mathfrak{D}$-point and that no denting point can be a
$\Delta$-point and, in fact, neither can
quasi-denting points by \cite[Corollary~2.2]{VeeorgFunc}. It was
also recently proved in \cite[Theorem~4.2]{KLT} that no locally 
uniformly non-square point can be a $\Delta$-point.

The main result of Section~\ref{sec:DualityForDelta} is
Theorem~\ref{thm:D-point/weak*-Delta-points_implies_weak*-super-Delta-point}
which says that as soon as a space contains a $\mathfrak{D}$-point
or its dual contains a weak$^*$ $\Delta$-point, then
the dual actually contains a weak$^*$ super $\Delta$-point.
This powerful result will have many implications
so let us mention a few.

In Subsection~\ref{subsec:Delta-duality_and_asymptotic-geometry} we prove
Theorem~\ref{thm:D-point/weak*-Delta-points_implies_weak*-super-Delta-point}
and use it to improve results from \cite{ALMP} and \cite{VeeorgFunc}
and show that asymptotically uniformly smooth spaces
cannot contain $\mathfrak{D}$-points and their duals
cannot contain weak$^*$ $\Delta$-points.

It is well-known that a separable Banach space with the Daugavet
property does not embed into a Banach space with an unconditional
basis \cite[Corollary~2.7]{MR1621757}.
However, for Daugavet points there is no such obstruction.
There exists a Banach space with a 1-unconditional basis
such that the set of Daugavet points is weakly dense
in the unit ball \cite[Theorem~4.7]{ALMT}.
A 1-unconditional basis does however prevent the existence
of super $\Delta$-points \cite[Proposition~2.12]{ALMT}.
The main result in Section~\ref{subsec:unconditional_bases}
is Theorem~\ref{thm:shrinking_RMAP_no_delta}.
The proof of this theorem relies on
Theorem~\ref{thm:D-point/weak*-Delta-points_implies_weak*-super-Delta-point}
and the theorem is used to show that if a Banach space $X$ has
a shrinking $k$-unconditional basis for $k < 2$,
then $X$ contains no $\mathfrak{D}$-points and
$X^*$ contains no weak$^*$ $\Delta$-points
(see Corollary~\ref{cor:uncond_basis_c0l1}).
This result answers \cite[Question~5.6]{ALM} affirmatively and
can be used to strengthen \cite[Proposition~4.6]{ALM}.
If $X$ has a monotone boundedly complete $k$-unconditional basis for $k < 2$,
we only get that $X$ has no $\Delta$-points.
Furthermore Corollary~\ref{cor:refl_ufdd_no_Dpts} says
that if $X$ is a reflexive Banach space
with a $k$-unconditional basis for $k < 2$,
then $X$ and $X^*$ do not contain $\mathfrak{D}$-points.
This strengthens previous results in this direction.

In Section~\ref{subsubsec:Hahn-Banach-smooth-spaces} we study
the implications of
Theorem~\ref{thm:D-point/weak*-Delta-points_implies_weak*-super-Delta-point}
for M-embedded spaces.
A question that has not appeared in print,
but has been in the back of the mind of several people studying
$\Delta$-points, is the following:
Do (non-reflexive) M-embedded Banach spaces and their duals fail
to contain $\Delta$-points?
Recall that $X$ is \emph{M-embedded} if $X$ is an M-ideal
in its bidual, which means that we can write
$X^{***} = X^* \oplus_1 X^\perp$.
Using the renorming of $\ell_2$ from
Section~\ref{sec:superr-banach-space} we can answer
the above question negatively by showing that
there exists a non-reflexive M-embedded Banach space $X$
such that both $X$ and $X^*$ have a super $\Delta$-point.

However, all is not lost.
M-embedded spaces are known to be Hahn--Banach smooth
and Asplund (c.f. \cite[Chapter~III]{HWW}).
We are able to show that if the dual unit ball
is weak$^*$ sequentially compact, then
we get a sequential version of
Theorem~\ref{thm:D-point/weak*-Delta-points_implies_weak*-super-Delta-point}
and that if $X$ contains a $\mathfrak{D}$-point
or if $X^*$ contains weak$^*$ $\Delta$-point,
then $X^*$ fails to be Kadets--Klee.
For Lipschitz-free spaces this implies that
if $M$ is a metric space such that
$\lipfree{M}$ is a dual space and $Y$ is an M-embedded
(or more generally, Hahn--Banach smooth) predual,
then $Y$ contains no $\mathfrak{D}$-points
and $\lipfree{M}$ contains no weak$^*$ $\Delta$-points
(see Corollary~\ref{cor:free-spaces_HBS-preduals}).

Finally, in Section~\ref{sec:metr-char-delta-molecules},
our main goal is to obtain a purely metric characterization of
those molecules $m_{xy}$ that are $\Delta$-points of $\lipfree{M}$,
called simply \emph{$\Delta$-molecules}
(see Theorem~\ref{thm:delta_molecules}).
In particular, we get that, when $M$ is proper, $m_{xy}$ is a
$\Delta$-point if and only if $x$ and $y$ are connected with a
geodesic (see Corollary~\ref{cor:cmsp-no-delta-mpq}).
These results improve Proposition~4.2 and Theorem~4.13
in \cite{JungRueda}, respectively.
We also prove that every $\Delta$-point of $\lipfree{M}$
with finite support is a finite convex sum
of $\Delta$-molecules
(see Theorem~\ref{thm:delta_finite_conv_comb}),
solving \cite[Problem~3]{VeeorgStudia} in the positive.

\subsection*{Notation}

Let $X$ be a Banach space and let $x \in S_X$.
Recall that $x$ is a \emph{denting point} of $B_X$
if for any $\delta > 0$ there exists a slice
$S$ of $B_X$ such that $x \in S$ and $\diam(S) < \delta$.
Furthermore, $x$ is said to be \emph{strongly exposed}
if there exists $x^* \in D(x)$ such that
$\diam S(x^*,\varepsilon) \to 0$ as $\varepsilon \to 0$.
An element $x^* \in B_{X^*}$ is weak$^*$ strongly exposed
if its strongly exposed by some $x \in D(x^*) \cap X$.
Replacing the diameter of the slices with the Kuratowski measure
of non-compactness $\alpha$ we can define the notion of
\emph{$\alpha$-strongly exposed point}.
This is similar to how quasi-denting points were generalized
from denting points by Giles and Moors \cite{MR1188888}.

We will follow standard Banach space notation as found
in the books \cite{AlbiacKalton} and \cite{MR2766381},
but let us also say a few things about Lipschitz-free space notation
since it is not yet completely standard.

For a metric space $(M,d)$ we denote by $B(x,r)$ the closed
ball centered at $x\in M$ with radius $r$, and we define
the \emph{metric segment} between points $x,y\in M$ by
\begin{equation*}
  [x,y] := \set{ z \in M : d(x,z) + d(z,y) = d(x,y)}.
\end{equation*}
If $M$ is \emph{pointed}, that is, it is equipped with
a distinguished base point usually denoted by $0$,
we let $\Lip_0(M)$ be the space
of Lipschitz functions $f : M \to \R$ such that $f(0) = 0$
equipped with the Lipschitz norm
\begin{equation*}
  \lipnorm{f} := \sup \set{
    \frac{|f(x) - f(y)|}{d(x,y)}
    : x, y \in M, x \neq y
  }.
\end{equation*}
Let $\delta\colon M \to \Lip_0(M)^*$ be the map
that assigns each $x \in M$ to the corresponding
point-evaluation $\delta(x)$ in $\Lip_0(M)^*$,
that is $f(x) = \duality{\delta(x), f}$
for $x \in M$ and $f \in \Lip_0(M)$.
It is well-known that $\delta$ is a non-linear isometry
and that $\lipfree{M} = \csp{}(\delta(M))$
is a predual of $\Lip_0(M)$ called the \emph{Lipschitz-free space}
over $M$.
The Lipschitz-free space is also known under the name
Arens--Eells space and the notation $\text{\AE}(M)$ is
sometimes used, e.g. in \cite{Weaver2}.

Recall that a function $f : M \to \R $, where $M$ is a metric space,
is \emph{locally flat} if
\begin{equation*}
  \lim_ {x,y\to z} \frac{f(x) - f(y)}{d(x,y)} = 0
\end{equation*}
for every $z \in M$.
We will follow \cite{AGPP} and define
\begin{equation*}
  \lip_0(M) := \set{ f \in \Lip_0(M) :
    f\ \text{is locally flat and}\ 
    \lim_{r \to \infty} \lipnorm{\restr{f}{M \setminus B(0,r)}} = 0
    }.
\end{equation*}
For compact $M$, $\lip_0(M)$ is simply the set of
locally flat $f:M\to\R$ such that $f(0)=0$.

A \emph{molecule} $m_{xy} \in \lipfree{M}$, $x \neq y$, is
an element of $S_{\lipfree{M}}$ of the form
\begin{equation*}
  m_{xy} := \frac{\delta(x) - \delta(y)}{d(x,y)}.
\end{equation*}
For a subset $K$ of $M$, the Lipschitz-free space
$\lipfree{K\cup\set{0}}$ is identified with the
subspace $\csp{}(\delta(K))$ of $\lipfree{M}$.
The \emph{support} of $\mu \in \lipfree{M}$, denoted $\supp(\mu)$,
is the intersection of all closed $K\subseteq M$ such that
$\mu \in \lipfree{K\cup\set{0}} \subseteq \lipfree{M}$.
It holds that $\mu\in\lipfree{\supp(\mu)\cup\set{0}}$
(see \cite[Section~2]{APPP}).

\section{A separable dual space with a Daugavet point}
\label{sec:separable-dual-space}

This section is dedicated to studying the example
given by Veeorg \cite[Example~3.1]{VeeorgStudia} of a Banach space
with the \RNP whose unit sphere contains a Daugavet point.
We consider a metric space $\mathcal{M}$ constructed from
a subset of $\R^2$ as follows.
Let $p:=(0,0)$, $q:=(1,0)$ and for every $n \in \N$ let
\begin{equation*}
  S_n := \set{(2^{-n}k , 2^{-n}) \, : \, k=0, 1, \ldots, 2^n}
\end{equation*}
and finally $\mathcal{M} := \set{p,q} \cup \bigcup_{n=1}^\infty S_n$.
(See Figure~\ref{figure:TVspace}.)
Endow $\mathcal{M}$ with the metric
\begin{equation*}
  d((x_1,y_1), (x_2,y_2))
  :=
  \begin{cases}
    \abs{x_1-x_2}, & \text{if } y_1 = y_2; \\
    \abs{y_1-y_2} + \min\set{x_1 + x_2, 2 - (x_1 + x_2)},
    & \text{if } y_1 \neq y_2;
  \end{cases}
\end{equation*}
and take $p$ as its base point.

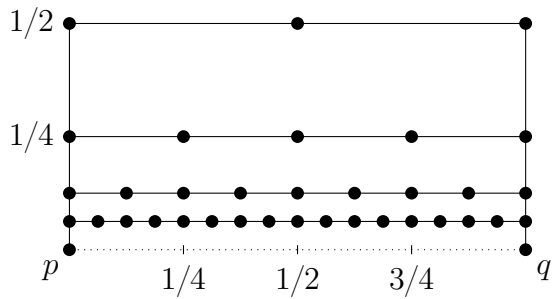
\begin{figure}[ht]
  \centering
  \begin{tikzpicture}[scale=6.0]
    \draw[dotted] (-0.01,0) -- (1.0,0);
    \draw (0,-0.01) -- (0,0.5);
    \draw (1,0) -- (1,1/2);

    \foreach \y in {1/4, 1/2} {
      \draw (0.01,\y) -- (-0.01,\y) node[left] {$\y$};
    }

    \foreach \x in {1/4, 1/2, 3/4} {
      \draw (\x,0.01) -- (\x,-0.01) node[below] {$\x$};
    }

    \fill (0,0) circle[radius=0.4pt] node[below left] {$p$};
    \fill (1,0) circle[radius=0.4pt] node[below right] {$q$};

    \foreach \sn in {1,2,3,4} {
      \draw (0,{2^(-\sn)}) -- (1,{2^(-\sn)});

      \pgfmathtruncatemacro\aa{2^\sn}
      \foreach \k in {0,...,\aa} {
        \fill ({\k*2^(-\sn)},{2^(-\sn)}) circle[radius=0.4pt];
      }
    }
  \end{tikzpicture}
  \caption{The sets $S_0,\ldots,S_4$}
  \label{figure:TVspace}
\end{figure}

Veeorg showed that $\lipfree{\mathcal{M}}$ has the \RNP and
that the molecule $m_{pq}$ is a Daugavet point.
In this section we will show that $\lipfree{\mathcal{M}}$
is isomorphic to $\ell_1$ and that it actually has a predual.
Let us start by introducing our candidate for a predual.

Denote
\begin{equation*}
  V := \set{(x,y) \in \mathcal{M} : x = 0 \text{ or } x = 1}
\end{equation*}
and let $h : \mathcal{M} \to \R$ be the function defined by
\begin{equation*}
  h(x,y) := x .
\end{equation*}
Observe that $h \in S_{\Lip_0(\mathcal{M})}$.
Define
\begin{equation*}
  Y :=
  \set{
    f \in \Lip_0(\mathcal{M}) \,:\,
    \lim_n\lipnorm{\restr{(f-f(q)\cdot h)}{S_n}} \to 0
    \text{ and }
    \restr{f}{V}
    \text{ is locally flat}
  }.
\end{equation*}

We can now state the main theorem of this section.

\begin{thm}\label{thm:veeorg_space}
  Let $\mathcal{M}$ and $Y$ be as defined above.
  Then the following holds:
  \begin{enumerate}
  \item\label{item:thm:veeorg_dual}
    The Banach space $Y$ satisfies
    $Y^* = \lipfree{\mathcal{M}}$;
  \item\label{item:thm:veeorg_l1}
    $\lipfree{\mathcal{M}}$ is isomorphic to $\ell_1$.
  \end{enumerate}
  Thus, there is a separable dual space isomorphic to $\ell_1$
  that admits a Daugavet point.
\end{thm}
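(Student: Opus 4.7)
The plan is to prove (i) by realizing the natural evaluation map $E : \lipfree{\mathcal{M}} \to Y^*$ as an isometric isomorphism, and then to derive (ii) by identifying the predual $Y$ with (a space isomorphic to) $c_0$, so that by duality $\lipfree{\mathcal{M}} = Y^* \simeq \ell_1$.

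For (i), I first verify that $Y$ is a closed linear subspace of $\Lip_0(\mathcal{M})$: linearity is immediate, and closedness follows since Lipschitz-norm limits of functions satisfying the asymptotic condition on the $S_n$'s and local flatness on $V$ still satisfy these conditions. Next, that $E$ is an isometric embedding reduces, by density of finitely supported $\mu \in \lipfree{\mathcal{M}}$, to showing $Y$ norms molecules. Given $\mu$ supported in $F := \{p,q\} \cup \bigcup_{n \leq N} S_n$ and an optimal $f \in \Lip_0(F)$ of norm one with $\langle \mu, f\rangle = \lipnorm{\mu}$, I extend $f$ by McShane--Whitney to $\tilde f \in B_{\Lip_0(\mathcal{M})}$, then modify $\tilde f$ on $\bigcup_{n>N}S_n$ by replacing it with the affine function $x \mapsto \tilde f(q)\cdot x$ on each $S_n$, $n > N$, with a suitable tapering along the two vertical arms of $V$ to secure local flatness at $p$ and $q$. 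A direct check from the explicit metric formula shows the modification still has Lipschitz constant at most one and lies in $Y$, while the pairing with $\mu$ is unchanged. For surjectivity of $E$, the unit ball $B_Y$ is compact in the topology of pointwise convergence on $\mathcal{M}$ (the defining conditions survive pointwise limits under a uniform Lipschitz bound), and a standard Weaver-type argument for lip-spaces then identifies $Y^*$ with $\lipfree{\mathcal{M}}$ via $E$.

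For (ii), I would split $Y = \R h \oplus Y_0$ with $Y_0 := \{f \in Y : f(q) = 0\}$; on $Y_0$ the first condition reduces to $\lipnorm{\restr{f}{S_n}} \to 0$ and local flatness holds at both $p$ and $q$, making $Y_0$ a genuine lip-type space in the spirit of $\lip_0$. Using suitably renormalized hat functions indexed by the points of $\mathcal{M} \setminus \{p,q\}$, together with natural extensions near $p$ and $q$, I would construct a Schauder basis of $Y_0$ and verify that it is equivalent to the canonical $c_0$-basis; this yields $Y \simeq \R \oplus c_0 \simeq c_0$, and (i) then gives $\lipfree{\mathcal{M}} \simeq \ell_1$. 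The main obstacle will be the uniform control of the basis constants: the cycles $p \to (0,2^{-n}) \to (1,2^{-n}) \to q \to p$ in $\mathcal{M}$ (which prevent $\mathcal{M}$ from being a tree) create nontrivial interactions between the values on the $S_n$'s and the values on $V$, so the order and the explicit form of the hat functions have to be chosen carefully for the two defining conditions of $Y$ to translate precisely into the $c_0$ eventually-null condition on coordinates.
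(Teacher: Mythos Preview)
Your approach to (i) has a genuine gap in the surjectivity step. The claim that $B_Y$ is compact in the topology of pointwise convergence is false: granting the conclusion $Y^*=\lipfree{\mathcal{M}}$, one has $Y^{**}=\Lip_0(\mathcal{M})$, and Goldstine's theorem then makes $B_Y$ pointwise (equivalently, weak$^*$) \emph{dense} in $B_{\Lip_0(\mathcal{M})}$; pointwise closedness would force $Y=\Lip_0(\mathcal{M})$, a contradiction. Concretely, neither defining condition of $Y$ survives pointwise limits: the requirement $\lipnorm{\restr{(f-f(q)h)}{S_n}}\to 0$ is a uniformity in $n$ that a diagonal pointwise limit destroys, and local flatness of $\restr{f}{V}$ at $p,q$ is likewise not pointwise-closed. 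The paper sidesteps this by invoking the Petun\={\i}n--Pl\={\i}\v{c}ko theorem: for a closed subspace $Y\subset X^*$ with $X$ separable, it suffices that $Y$ separate points of $X$ and consist entirely of norm-attaining functionals. The norm-attainment check replaces your compactness claim, and the paper carries it out by showing every $f\in S_Y$ attains its Lipschitz constant on some pair in $\mathcal{M}$, splitting along metric segments into the $V$-case and the $S_n$-case and using precisely the two defining conditions of $Y$. Your isometry step is more plausible but also not free: the naive replacement of $\tilde f$ by $\tilde f(q)\cdot h$ on $S_n$, $n>N$, can double the Lipschitz constant across the interface with level $N$ (e.g.\ between $(1,2^{-N})$ and $(1,2^{-(N+1)})$), so the ``suitable tapering'' has to be made explicit.

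For (ii), your route via $Y\simeq c_0$ is entirely different from the paper's and, as written, depends on (i). The paper works directly on the free-space side: it covers $\mathcal{M}$ by three overlapping vertical strips $A,B,C$, uses a Lipschitz partition of unity to exhibit $\lipfree{\mathcal{M}}$ as a complemented subspace of $\lipfree{\cl{A}}\oplus\lipfree{\cl{B}\cup\{p\}}\oplus\lipfree{\cl{C}\cup\{p\}}$, identifies each summand with $\ell_1$ (the outer strips sit in $\R$-trees via Godard; the middle strip is a uniformly separated union of finite sets), and finishes with Pe\l{}czy\'nski's theorem on complemented subspaces of $\ell_1$. This completely bypasses the basis-constant control that you correctly flag as the obstacle in your approach.
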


\begin{proof}[Proof of Theorem~\ref{thm:veeorg_space}~\ref{item:thm:veeorg_dual}]
  According to a theorem by Petunin and Plichko
  \cite[Theorem~4]{PetuninPlichko},
  given a separable Banach space $X$,
  a subspace $Y$ of $X^\ast$ is an isometric predual of $X$
  when it satisfies the following conditions:
  \begin{enumerate}[label={(\alph*)}]
  \item\label{item:ppcond1}
    $Y$ is norm closed,
  \item\label{item:ppcond2}
    $Y$ separates points of $X$,
  \item\label{item:ppcond3}
    all elements of $Y$ attain their norm on $S_X$.
  \end{enumerate}
  We will apply this result to $X:=\lipfree{\mathcal{M}}$
  and the subspace $Y$ of $\Lip_0(\mathcal{M})$.
  Let us verify that $Y$ satisfies all three Petunin--Plichko conditions.

  We start with \ref{item:ppcond1}.
  Suppose that $(f_n)$ is a sequence in $Y$ that converges
  in norm to $f\in\Lip_0(\mathcal{M})$.
  Fix $\varepsilon > 0$, then there is $k$ such that
  $\lipnorm{f - f_k} < \varepsilon$, and we get for every $n$
  \begin{align*}
    \lipnorm{\restr{(f-f(q)\cdot h)}{S_n}}
    &\leq
      \lipnorm{\restr{(f-f_k)}{S_n}}
      + \lipnorm{\restr{(f_k-f_k(q)\cdot h)}{S_n}}
      + \lipnorm{\restr{((f_k(q)-f(q))\cdot h)}{S_n}} \\
    &<
      2\varepsilon + \lipnorm{\restr{(f_k-f_k(q)\cdot h)}{S_n}}.
  \end{align*}
  Since $f_k\in Y$, this will be less than $3\varepsilon$
  for $n$ large enough.
  Moreover, $V$ is compact and so the space
  $\lip_0(V)$ is closed (see e.g. \cite[Corollary~4.5]{Weaver2}).
  Since $\restr{f_k}{V} \to \restr{f}{V}$ in norm,
  we get $\restr{f}{V}\in\lip_0(V)$ as well, so that $f\in Y$.
  Thus $Y$ is closed.

  Let us now verify \ref{item:ppcond2}.
  Fix $\mu \in \lipfree{\mathcal{M}}$, $\mu \neq 0$.
  Suppose first that $\supp(\mu)$
  contains no isolated point of $\mathcal{M}$.
  Then $\supp(\mu)=\set{q}$ as, by definition,
  the base point cannot be an isolated point
  of $\supp(\mu)$. Thus $\mu$
  is a nonzero multiple of $\delta(q)$,
  so that $h(\mu) \neq 0$ with $h\in Y$.
  Now suppose that $\supp(\mu)$ contains some isolated point
  $x$ of $\mathcal{M}$.
  Then $\set{x}$ is an open neighborhood of $x$,
  and so by \cite[Proposition~2.7]{APPP} there exists
  $f \in \Lip_0(\mathcal{M})$, supported on $\set{x}$,
  such that $f(\mu) \neq 0$.
  In other words, $\chi_{\set{x}}(\mu)\neq 0$,
  where $\chi_{\set{x}}$ is the characteristic function of the set $\set{x}$.
  But $\chi_{\set{x}}\in Y$, so this finishes the proof of \ref{item:ppcond2}.

  Finally we check \ref{item:ppcond3}.
  We will see that, in fact, every $f\in Y$ attains its Lipschitz
  constant between two points of $\mathcal{M}$, and thus it attains
  its norm as a functional at some molecule.
  Fix $f\in Y$ and assume that $\lipnorm{f}=1$.
  Suppose that $f$ does not attain its Lipschitz constant.
  Then we may still find a sequence of pairs of points $(u_n,v_n)$ in
  $\mathcal{M}$ such that $f(m_{u_nv_n}) \to 1$.
  Note that $x\in [u,v]$ implies that $m_{uv}$
  is a convex combination of $m_{ux}$ and $m_{xv}$, hence
  \begin{equation*}
    \max \set{f(m_{ux}), f(m_{xv})}
    \geq f(m_{uv}).
  \end{equation*}
  Thus, by replacing each pair $u_n, v_n$ with other points
  in $[u_n,v_n]$ and passing to a subsequence if necessary,
  we may assume that either $u_n,v_n\in V$ for all $n$
  or there is a sequence $(k_n)$ in $\N$ such that
  $u_n, v_n \in S_{k_n}$ for all $n$.

  In the first case, all $u_n,v_n$ belong to the compact set $V$, so
  by passing to a subsequence we get $u_n\to u$, $v_n\to v$ for some
  $u,v\in V$.
  Note that it is impossible to get $u=v$, as that would contradict
  the fact that $\restr{f}{V}$ is locally flat.
  Thus $u\neq v$ and so $f(m_{uv})=1$.

  Now suppose that $u_n,v_n\in S_{k_n}$ for all $n$.
  If $(k_n)$ is bounded, say by $N$, then this implies that $f$
  restricted to the finite set $S_1 \cup \cdots \cup S_N$
  has Lipschitz constant $1$, and so it must attain its
  Lipschitz constant in that set.
  Otherwise we may assume that $k_n\to\infty$.
  Then we have $\lipnorm{\restr{f}{S_{k_n}}} \geq f(m_{u_nv_n})$ and,
  given that $\lipnorm{\restr{f}{S_{k_n}}} \to \abs{f(q)}$,
  we obtain $\abs{f(q)}=1$.
  So $f$ attains its Lipschitz constant between $p$ and $q$.

  Thus, all conditions in the Petunin--Plichko theorem are satisfied, and
  this finishes the proof that $Y^\ast=\lipfree{\mathcal{M}}$.
\end{proof}

\begin{rem}
  The definition of $Y$ is not equivalent if we ask that $f$ is
  locally flat instead of $\restr{f}{V}$. For instance, $h$ is not locally
  flat while $\restr{h}{V}$ is. It is conjectured that whenever a
  Lipschitz-free space is a separable dual, it must admit a predual that
  consists entirely of locally flat functions. Preduals of
  Lipschitz-free spaces are not unique in general, so there might be an
  alternative predual $Y$ satisfying that condition.
\end{rem}

\begin{rem}\label{rem:exmp42_ANPP_is_dual}
  The argument in the proof of
  Theorem~\ref{thm:veeorg_space}~\ref{item:thm:veeorg_dual}
  can be easily adapted to show that the Lipschitz-free space from
  \cite[Example~4.2]{ANPP} is also a dual space.
  The corresponding predual has a simpler description,
  as the local flatness condition can be dropped.
  Note that the molecule $m_{0q}$ in that example is a $\Delta$-point
  (this can be shown e.g. by using Theorem~\ref{thm:delta_molecules}
  below) but not a Daugavet-point as there are denting points
  in the unit ball of this space at distance strictly less than 2 to it.
\end{rem}

Before we engage with the proof of
Theorem~\ref{thm:veeorg_space}~\ref{item:thm:veeorg_l1}
let us note that the proof below
can be adapted to show that the Lipschitz-free space over other similar
metric spaces, such as \cite[Examples 4.2 and 4.3]{ANPP},
is also isomorphic to $\ell_1$.
It is based on the next general lemma, which can be understood
as a finite version of the approach followed in \cite{AACD_sums}.

\begin{lem}\label{lm:finite_decomp}
  Let $M$ be a complete pointed metric space,
  and $\varphi_1,\ldots,\varphi_n$ be non-negative Lipschitz functions
  on $M$ with bounded support and such that
  $\varphi_1 + \cdots + \varphi_n = 1$.
  Suppose that $A_1,\ldots,A_n$ are subsets of $M$ containing
  the base point, and $\supp(\varphi_k)\subset A_k$ for all $k$.
  Then $\lipfree{M}$ is isomorphic to a complemented
  subspace of $\lipfree{A_1} \oplus \cdots \oplus \lipfree{A_n}$.
\end{lem}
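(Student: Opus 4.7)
The plan is to use the partition of unity $(\varphi_k)$ to split the identity of $\lipfree{M}$ as a sum $\sum_{k=1}^n W_k$ of weighting operators, each $W_k$ landing in $\lipfree{A_k}$. This will yield both an isomorphic embedding of $\lipfree{M}$ into $\lipfree{A_1}\oplus\cdots\oplus\lipfree{A_n}$ and a bounded left inverse, which is exactly the desired complementation statement.

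The first step is to realise the $W_k$ as pre-adjoints of pointwise multiplications on $\Lip_0(M)$. For each $k$, define $T_k : \Lip_0(M) \to \Lip_0(M)$ by $T_kf(x) := \varphi_k(x) f(x)$; this kills the base point since $f(0)=0$. Choose $R_k>0$ with $\supp(\varphi_k) \subseteq B(0,R_k)$, so that $\varphi_k$ is in particular bounded and $\abs{f(x)} \le R_k \lipnorm{f}$ for every $x \in \supp(\varphi_k)$. A direct computation, splitting according to whether $x$ or $y$ lies in $\supp(\varphi_k)$, then yields
\[
\lipnorm{T_k f} \le \bigl(R_k\lipnorm{\varphi_k}+\norm{\varphi_k}_\infty\bigr)\lipnorm{f}.
\]
Moreover, $T_k$ is weak$^*$-to-weak$^*$ continuous (pointwise multiplication by a bounded function preserves bounded pointwise convergence), so it admits a pre-adjoint $W_k : \lipfree{M} \to \lipfree{M}$ characterised on evaluation functionals by $W_k(\delta(x)) = \varphi_k(x)\delta(x)$.

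Next, I verify two algebraic properties of the family $(W_k)$. First, each $W_k$ takes values in $\lipfree{A_k}$: on the dense subspace $\spn \set{\delta(x) : x \in M}$ this is immediate, since $W_k(\delta(x))$ vanishes when $\varphi_k(x)=0$ and otherwise $x\in\supp(\varphi_k) \subseteq A_k$, so $\delta(x)\in\lipfree{A_k}$; closedness of $\lipfree{A_k}$ in $\lipfree{M}$ and continuity of $W_k$ extend the conclusion. Second, the partition of unity identity $\sum_k \varphi_k \equiv 1$ gives $\sum_k W_k(\delta(x)) = \delta(x)$ for every $x \in M$, hence $\sum_{k=1}^n W_k = \mathrm{Id}_{\lipfree{M}}$ by linearity and continuity.

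Finally, define
\[
T : \lipfree{M} \to \lipfree{A_1} \oplus \cdots \oplus \lipfree{A_n}, \qquad T(\mu) := (W_1\mu,\ldots,W_n\mu),
\]
and let $S$ be the summation map in the reverse direction, which is bounded thanks to the isometric inclusions $\lipfree{A_k} \hookrightarrow \lipfree{M}$. Then $S\circ T = \sum_{k=1}^n W_k = \mathrm{Id}_{\lipfree{M}}$, so $T$ is an isomorphism onto its image and $T\circ S$ is a bounded projection of the direct sum onto $T(\lipfree{M})$, establishing the lemma. The delicate point, and the only place where the bounded-support hypothesis on the $\varphi_k$ is used essentially, is the boundedness of $T_k$ on $\Lip_0(M)$: without this hypothesis $\abs{f(x)}$ could grow unboundedly on $\supp(\varphi_k)$ and the above estimate would break down.
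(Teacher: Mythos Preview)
Your proof is correct and follows essentially the same route as the paper: define weighting operators $W_k$ on $\lipfree{M}$ dual to multiplication by $\varphi_k$, observe that their ranges lie in $\lipfree{A_k}$, and use $T\mu=(W_1\mu,\ldots,W_n\mu)$ together with the summation map $S$ to get $ST=\mathrm{Id}$ and hence the complementation. The only difference is that the paper outsources the well-definedness and boundedness of $W_k$ to \cite[Section~2]{APPP}, whereas you supply a self-contained argument via the bound $\lipnorm{\varphi_k f}\le (R_k\lipnorm{\varphi_k}+\norm{\varphi_k}_\infty)\lipnorm{f}$ and weak$^*$-to-weak$^*$ continuity of pointwise multiplication; this makes your write-up more elementary and also makes explicit where the bounded-support hypothesis enters.
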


\begin{proof}
  For each $k=1,\ldots,n$ let $W_k \colon \lipfree{M} \to \lipfree{M}$
  be the weighting operator defined by
  \begin{equation*}
    \duality{W_k \mu,f} := \duality{\mu, f \cdot \varphi_k}
  \end{equation*}
  for $\mu \in \lipfree{M}$ and $f \in \Lip_0(M)$.
  By the results in \cite[Section~2]{APPP},
  this is a well-defined bounded operator.
  Moreover, its range is contained in $\lipfree{A_k}$,
  which we identify with the corresponding subspace of
  $\lipfree{M}$.
  Now define operators
  \begin{align*}
    T &\colon
        \lipfree{M} \to
        \lipfree{A_1} \oplus \cdots \oplus\lipfree{A_n} \\
    S &\colon
        \lipfree{A_1} \oplus \cdots \oplus \lipfree{A_n}
        \to
        \lipfree{M}
  \end{align*}
  by $T\mu := (W_1 \mu,\ldots,W_n \mu)$ and
  $S(\mu_1, \ldots, \mu_n) := \mu_1 + \cdots + \mu_n$.
  Both of them are clearly bounded, and
  for every $\mu\in\lipfree{M}$ and $f\in\Lip_0(M)$ we have
  \begin{align*}
    \duality{ST\mu,f}
    &= \duality{W_1 \mu,f} + \cdots + \duality{W_n \mu,f} \\
    &= \duality{\mu,f\cdot\varphi_1} + \cdots + \duality{\mu,f\cdot\varphi_n} \\
    &= \duality{\mu,f}
  \end{align*}
  by the choice of $\varphi_k$.
  Thus $ST$ is the identity on $\lipfree{M}$.
  Therefore $P := TS$ is a projection of
  $\lipfree{A_1} \oplus \cdots \oplus \lipfree{A_n}$
  onto its subspace $T(\lipfree{M})$,
  which is isomorphic to $\lipfree{M}$.
\end{proof}

The following provides a simple sufficient condition allowing
Lemma~\ref{lm:finite_decomp} to be applied.

\begin{lem}\label{lm:dist_suff_cond}
  Let $M$ be a bounded complete metric space,
  and let $U_1,\ldots,U_n$ be an open cover of $M$.
  Suppose that
  \begin{equation*}
    \inf_{x \in M} \sum_{k=1}^n d(x, M\setminus U_k) > 0.
  \end{equation*}
  Then there exist $\varphi_1,\ldots,\varphi_n$,
  non-negative Lipschitz functions on $M$,
  such that $\varphi_1 + \cdots + \varphi_n = 1$
  and each $\varphi_k$ vanishes outside of $U_k$.
\end{lem}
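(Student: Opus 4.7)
The plan is to build the partition of unity explicitly, in the standard way, via distances to complements, and then verify the Lipschitz property using the lower bound on the sum together with boundedness of $M$.

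First, I would set $\psi_k(x) := d(x, M \setminus U_k)$ for each $k=1,\ldots,n$. Each $\psi_k$ is $1$-Lipschitz and nonnegative, with $\psi_k(x)=0$ if and only if $x\in\overline{M\setminus U_k}=M\setminus U_k$ (the complement is closed since $U_k$ is open); in particular $\psi_k$ vanishes outside $U_k$. Setting $\Psi := \psi_1+\cdots+\psi_n$, the hypothesis gives a constant $c>0$ such that $\Psi(x)\geq c$ for every $x\in M$, and $\Psi$ is $n$-Lipschitz.

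Next, I would define
\begin{equation*}
  \varphi_k(x) := \frac{\psi_k(x)}{\Psi(x)}, \qquad k=1,\ldots,n.
\end{equation*}
These are well-defined since $\Psi\geq c$, nonnegative, and vanish outside $U_k$ because $\psi_k$ does. Summing gives $\varphi_1+\cdots+\varphi_n=1$ pointwise on $M$. It remains to check that each $\varphi_k$ is Lipschitz. Since $M$ is bounded, each $\psi_k$ is bounded above by $D:=\diam(M\cup\{0\})$. From $|\Psi(x)-\Psi(y)|\leq n\,d(x,y)$ and $\Psi\geq c$ we get that $1/\Psi$ is Lipschitz with constant $n/c^2$ and bounded by $1/c$. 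Writing
\begin{equation*}
  \varphi_k(x)-\varphi_k(y)
  = \frac{\psi_k(x)-\psi_k(y)}{\Psi(x)} + \psi_k(y)\left(\frac{1}{\Psi(x)}-\frac{1}{\Psi(y)}\right),
\end{equation*}
the triangle inequality yields $\lipnorm{\varphi_k}\leq \tfrac{1}{c}+\tfrac{Dn}{c^2}$, so $\varphi_k$ is Lipschitz.

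Honestly, there is no serious obstacle here: the whole argument is just the classical ``distance-to-complement'' partition of unity, and the only delicate point is the Lipschitz estimate for the quotient $\psi_k/\Psi$. That estimate is where the assumptions on $M$ are used in full: boundedness of $M$ supplies the uniform bound on $\psi_k$, and the strict positivity of $\inf_x\sum_k d(x,M\setminus U_k)$ supplies the lower bound on the denominator. Completeness of $M$ plays no active role in this lemma itself; it is needed only further downstream when Lemma~\ref{lm:finite_decomp} is invoked.
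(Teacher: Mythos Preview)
Your proof is correct and follows essentially the same route as the paper: both define $\varphi_k(x)=d(x,M\setminus U_k)/\sum_i d(x,M\setminus U_i)$, with the paper simply citing a reference (Weaver, Proposition~1.30) for the fact that a quotient of Lipschitz functions with denominator bounded away from zero is Lipschitz, whereas you write out the estimate explicitly.
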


\begin{proof}
  It is enough to take
  \begin{equation*}
    \varphi_k(x)
    :=
    \frac{d(x,M\setminus U_k)}{\sum_{i=1}^n d(x,M\setminus U_i)}
  \end{equation*}
  for $x\in M$.
  The denominator is Lipschitz and bounded below by assumption,
  so each $\varphi_k$ is Lipschitz
  (see e.g. \cite[Proposition 1.30]{Weaver2}),
  and the other conditions are satisfied trivially.
\end{proof}

\begin{proof}[Proof of Theorem~\ref{thm:veeorg_space}~\ref{item:thm:veeorg_l1}]
  Fix real numbers $\alpha, \beta$ with
  $0 < \beta < \alpha < \frac{1}{2}$
  and consider the sets
  \begin{align*}
    A &:= \set{(x,y) \in \mathcal{M} \,:\, x<\alpha} \\
    B &:= \set{(x,y) \in \mathcal{M} \,:\, \beta<x<1-\beta} \\
    C &:= \set{(x,y) \in \mathcal{M} \,:\, x>1-\alpha}
  \end{align*}
  which form an open cover of $\mathcal{M}$ to which we wish to
  apply Lemma~\ref{lm:dist_suff_cond}.
  Note that $A \cap C = \varnothing$ while $B$
  intersects both $A$ and $C$.
  For $z=(x,y)\in\mathcal{M}$ denote
  \begin{equation*}
    D(z)
    :=
    d(z, \mathcal{M}\setminus A)
    + d(z,\mathcal{M}\setminus B)
    + d(z,\mathcal{M}\setminus C).
  \end{equation*}
  Let us see that $D(z)\geq\alpha-\beta$ for all $z$.
  By symmetry, it is enough to verify this when $x \leq \frac{1}{2}$.
  Then we have three cases:
  \begin{itemize}
  \item
    If $z \in A \setminus B$ then
    $D(z) = d(z, \mathcal{M} \setminus A)$.
    So either $z = p$ and then $D(z) \geq \alpha$,
    or $x \leq \beta$ and $y > 0$,
    then the closest point to $z$ in $\mathcal{M} \setminus A$
    has the form $(x_a, y)$ for $x_a\geq\alpha$,
    and so $D(z) = x_a - x \geq \alpha - \beta$.
  \item
    If $z \in A \cap B$ then
    $D(z)
    = d(z,\mathcal{M}\setminus A) + d(z,\mathcal{M}\setminus B)$,
    and we have $y > 0$ and $\beta < x < \alpha$.
    Thus the closest points in $\mathcal{M} \setminus A$ and
    $\mathcal{M} \setminus B$ are $(x_a, y)$ and $(x_b, y)$
    where $x_a \geq \alpha$ and $x_b \leq \beta$,
    respectively, so $D(z) = (x_a-x) + (x-x_b) \geq \alpha - \beta$.
  \item
    If $z \in B \setminus (A\cup C)$
    then $D(z) = d(z,\mathcal{M}\setminus B)$,
    and we have $y > 0$ and $\alpha \leq x \leq\frac{1}{2}$.
    Thus the closest point in $\mathcal{M}\setminus B$ is
    $(x_b,y)$ where $x_b \leq \beta$, and
    $D(z) = x - x_b \geq \alpha - \beta$.
  \end{itemize}
  Thus $\inf D(z)>0$, and we may apply Lemma~\ref{lm:dist_suff_cond}
  followed by Lemma~\ref{lm:finite_decomp} to conclude that
  $\lipfree{\mathcal{M}}$ is isomorphic to a complemented subspace of
  \begin{equation*}
    \lipfree{\cl{A}} \oplus \lipfree{\cl{B} \cup \set{p}}
    \oplus \lipfree{\cl{C} \cup \set{p}}.
  \end{equation*}
  We will prove that each of these three Lipschitz-free spaces
  is isomorphic to $\ell_1$, and then the result will follow
  by Pe{\l}czy\'nski's classical theorem that complemented
  infinite-dimensional subspaces of $\ell_1$ are isomorphic
  to $\ell_1$.

  Note that the set $\cl{A}$ is an infinite weighted tree, i.e.,
  a connected graph with no cycle.
  In particular, it is isometric to a subset of an $\R$-tree
  that contains all of its branching points
  $(0, 2^{-n})$, $n \in \N$.
  Thus $\lipfree{\cl{A}}$ is isometric to $\ell_1$ by
  \cite[Corollary~3.4]{Godard}.

  Let $K_0 := \set{p}$ and
  $K_n := \set{(x,y) \in \cl{B} : y = 2^{-n}}$ for each $n \in \N$.
  Then each $\lipfree{K_n}$ is isometric to a finite dimensional
  $\ell_1$-space, and there is a bound above and below on the
  distance between elements in distinct $K_n$'s.
  Using  \cite[Proposition~5.1]{Godard} we get that
  $\lipfree{\cl{B} \cup \set{p}}$ is isomorphic to $\ell_1$.

  Finally, notice that $\lipfree{\cl{C}} = \lipfree{\cl{A}}$
  as $\cl{C}$ and $\cl{A}$ are isometric.
  On the other hand, $\lipfree{\cl{C}}$ and
  $\lipfree{\cl{C}\cup\set{p}}$
  are isomorphic by \cite[Lemma~2.8]{AACD_sums}.
  Thus $\lipfree{\cl{C}\cup\set{p}}$ is also
  isomorphic to $\ell_1$ and this ends the proof.
\end{proof}

We end this section by showing a couple of geometric
properties of the predual $Y$ of $\lipfree{\mathcal{M}}$.
Recall that a Banach space $X$ is \emph{almost square}
if for every finite subset $x_1,\ldots,x_n \in S_X$
and $\varepsilon > 0$, there exists $y \in S_X$ such that
$\|x_i \pm y\| \le 1 + \varepsilon$ for $i = 1, \ldots, n$
\cite{MR3415738}.
The space $c_0$ is the prototypical almost square Banach space.

For a proper metric space $M$ we have that
$\lip_0(M)$ is, for any $\varepsilon > 0$,
$(1 + \varepsilon)$-isometric to a subspace of $c_0$
by \cite[Lemma~3.9]{MR3376824}.
Hence $\lip_0(M)$ is almost square \cite[Example~3.2]{MR3415738}.
Next we show that the predual of $\lipfree{\mathcal{M}}$
shares this property.
At the end of Section~\ref{subsubsec:Hahn-Banach-smooth-spaces},
we will see that, unlike $\lip_0(M)$ for $M$ proper
and purely 1-unrectifiable, $Y$ is not M-embedded.

\begin{prop}\label{prop:FV_almost_square}
  The predual $Y$ of the space $\lipfree{\mathcal{M}}$
  is almost square.
\end{prop}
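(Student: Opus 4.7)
Given $f_1,\dots,f_N\in S_Y$ and $\eps>0$, the goal is to produce $g\in S_Y$ such that $\|f_i\pm g\|_L\le 1+\eps$ for every $i$. Using the identity $\max(|a+b|,|a-b|)=|a|+|b|$, this is equivalent to finding $g\in S_Y$ of unit Lipschitz norm such that
\[
  \frac{|f_i(u)-f_i(v)|}{d(u,v)}+\frac{|g(u)-g(v)|}{d(u,v)}\le 1+\eps
\]
for every pair $u,v\in\mathcal{M}$ and every $i$. So $g$ must carry its large slopes on pairs where every $f_i$ has small slope.

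The two defining conditions of $Y$ locate two natural flat zones shared by all $f_i\in S_Y$: local flatness makes $\restr{f_i}{V}$ nearly flat near $p$ and $q$, while the condition $\lim_m\lipnorm{\restr{(f_i-f_i(q)h)}{S_m}}=0$ forces the slope of $\restr{f_i}{S_m}$ to approach $|f_i(q)|\le 1$ as $m\to\infty$. This leads to the candidate
\[
  g_n(z):=2^{-n-1}\cdot\mathbbm{1}_{S_n}(z),\qquad z\in\mathcal{M},
\]
with $n$ chosen large depending on $f_1,\dots,f_N$ and $\eps$. Since $g_n$ is constant on each $S_m$ and $g_n(q)=0$, condition (i) in the definition of $Y$ is immediate; and since $\restr{g_n}{V}$ vanishes outside the two-point set $\{(0,2^{-n}),(1,2^{-n})\}$, it is locally flat at $p$ and $q$. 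Inspection of pairs shows that the minimal distance from $S_n$ to $\mathcal{M}\setminus S_n$ is $2^{-n-1}$, realized by $((0,2^{-n}),(0,2^{-n-1}))$, from which $\|g_n\|_L=1$.

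The $g_n$-slope vanishes unless exactly one of the endpoints lies in $S_n$; so take $u\in S_n$ and $v\in S_m\cup\{p,q\}$ with $m\ne n$. If $|2^{-n}-2^{-m}|\ge 2^{-n-1}/\eps$, the $g_n$-contribution is already at most $\eps$ and the bound is immediate. Otherwise the second coordinate $y_2$ of $v$ is close to $2^{-n}$, hence small, so $m$ is necessarily large when $n$ is. In this resonant case, writing $u=(x_1,y_1)$ and $v=(x_2,y_2)$, I would decompose
\[
  |f_i(u)-f_i(v)|\le |f_i(u)-f_i(0,y_1)|+|f_i(0,y_1)-f_i(0,y_2)|+|f_i(0,y_2)-f_i(v)|
\]
(or analogously through $(1,y_1),(1,y_2)$ when $x_1+x_2>1$). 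The two horizontal terms are bounded by $(|f_i(q)|+\eta_k)x_k$ using the asymptotic control on $\lipnorm{\restr{f_i}{S_k}}$ for $k=1,2$, and the vertical term by $\eps|y_1-y_2|$ using the local flatness of $\restr{f_i}{V}$.

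The main obstacle is the algebraic bookkeeping in this resonant case: inserting the above estimates together with $d(u,v)=|y_1-y_2|+\min(x_1+x_2,2-x_1-x_2)$ must yield
\[
  \frac{|f_i(u)-f_i(v)|+2^{-n-1}}{d(u,v)}\le\max(|f_i(q)|+\eta,\,1+\eps)\le 1+\eps,
\]
which reduces to an elementary fraction manipulation. This rests on choosing $n$ large enough that (a) $\eta_k<\eps$ uniformly in $i$ for $k$ above some threshold, (b) local flatness at $p,q$ produces $\eps$-slopes on all $V$-pairs close enough to these points, and (c) for pairs with $m$ in a bounded range, $d(u,v)$ is already large enough to render $g_n$-slopes negligible. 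All three conditions can be met simultaneously for $n$ sufficiently large, giving $\|f_i\pm g_n\|_L\le 1+\eps$ as required, and hence the almost squareness of $Y$.
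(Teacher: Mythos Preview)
Your approach is correct and the algebra does close: the key point you leave implicit is that $|y_1-y_2|=|2^{-n}-2^{-m}|\ge 2^{-n-1}$ whenever $m\ne n$, so in the resonant case the numerator is at most $(|f_i(q)|+\eta)(x_1+x_2)+(1+\eps')|y_1-y_2|$, and since $|f_i(q)|\le 1$ and $\eta,\eps'\le\eps$, comparing termwise with $(1+\eps)\bigl((x_1+x_2)+|y_1-y_2|\bigr)$ gives the bound. The cases $v\in\{p,q\}$ should be mentioned separately (there is no $m$), but they fold into the same decomposition through $(0,2^{-n})$ or $(1,2^{-n})$ with only one horizontal leg.

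The paper takes a genuinely different route. Instead of the constant bump $g_n=2^{-n-1}\mathbbm{1}_{S_n}$, it uses the \emph{linear} function $g=2^{-(k+1)}h\cdot\mathbbm{1}_{S_k}$, which vanishes at $(0,2^{-k})$. The payoff is that the paper never needs the asymptotic slope condition $\lipnorm{\restr{(f_i-f_i(q)h)}{S_m}}\to 0$ at all, and only invokes local flatness at $q$ (not at $p$): when the vertical segment runs along the left edge of $V$, the paper's $g$ is already zero there, so $|\langle f_i\pm g,m_{a'b'}\rangle|=|\langle f_i,m_{a'b'}\rangle|\le 1$ with no $\eps$ needed. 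The paper also organizes the mixed case via the convex-combination identity for molecules (if $a'\in[a,b]$ then $m_{ab}\in\conv\{m_{aa'},m_{a'b}\}$), which lets it bound $|\langle f_i\pm g,m_{ab}\rangle|$ by a maximum over three short legs rather than summing slopes along a path as you do. Your argument is more symmetric and makes visible that both defining conditions of $Y$ supply flat zones; the paper's is more economical in the hypotheses it actually consumes.
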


\begin{proof}
  Let $(f_i)_{i = 1}^N \subset S_Y$ and $\eps > 0.$ As
  $\restr{f_i}{V}$ is locally flat we can choose $\delta > 0$ such
  that for all $a, b \in B(q, \delta) \cap V$
  \begin{equation*}
    |\langle f_i, m_{ab} \rangle| < \eps.
  \end{equation*}
  for all $i = 1, \ldots, N$.
  Choose $k$ such that
  $2^{-k} < \eps$ and such that the points
  $(1,2^{-k+1})$, $(1,2^{-k})$ and
  $(1,2^{-k-1})$ are all in $B(q,\delta)$.
  Define $g \in Y$ by
  \begin{equation*}
    g(a) :=
    \begin{cases}
      2^{-(k+1)} h(a), & a \in S_k \\
      0, &\text{otherwise}.
    \end{cases}
  \end{equation*}
  Let $a_0 = (1,2^{-k}) \in S_k$. It is clear that $\lipnorm{g} = 1$,
  since the closest point to $a_0$ is at distance $2^{-(k+1)}$ and
  $h(a_0) = 1$.
  Let us check that $\lipnorm{f_i \pm g} \le 1 + \eps$.

  If $a,b \notin \supp(g) \subset S_k$, then
  \begin{equation*}
    |\langle f_i \pm g, m_{ab} \rangle|
    =
    |\langle f_i, m_{ab} \rangle| \le 1.
  \end{equation*}
  If $a,b \in S_k$, then
  \begin{equation*}
    |\langle f_i \pm g, m_{ab} \rangle|
    \le
    |\langle f_i, m_{ab} \rangle|
    +
    2^{-(k+1)}
    |\langle h, m_{ab} \rangle|
    \le 1 + \eps.
  \end{equation*}
  If $a \in S_k$ and $b \notin S_k$,
  then we can find $a' \in V \cap S_k$
  and $b' \in V$ (at the level of $b$)
  such that $a' \in [a,b]$ and $b' \in [a',b]$.
  Now $m_{ab}$ is a convex combination
  of $m_{aa'}$ and $m_{a'b}$, and
  $m_{a'b}$ is a convex combination
  of $m_{a'b'}$ and $m_{b'b}$.
  Hence
  \begin{align*}
    |\langle f_i \pm g, m_{ab}\rangle|
    &\le
    \max\{
    |\langle f_i \pm g, m_{aa'}\rangle|,
    |\langle f_i \pm g, m_{a'b'}\rangle|,
    |\langle f_i \pm g, m_{b'b}\rangle|
    \} \\
    &\le
    \max\{
    1 + \eps,
    |\langle f_i \pm g, m_{a'b'}\rangle|
    \}.
  \end{align*}
  If $a'$ and $b'$ have first coordinate $0$, then
  we are done by the above.
  If $a'$ and $b'$ have first coordinate $1$, then
  by the convex combination trick we may assume
  that $b' \in B(q,\delta)$
  (e.g. $b' = (1,2^{-k+1})$ or $b' = (1,2^{-k-1})$).
  But for $a', b' \in B(q,\delta)$ we have
  \begin{equation*}
    |\langle f_i \pm g, m_{a'b'} \rangle|
    \le
    |\langle f_i, m_{a'b'} \rangle|
    +
    |\langle g, m_{a'b'} \rangle|
    \le
    \eps + 1
  \end{equation*}
  as desired.
\end{proof}

So while our predual $Y$ shares some properties with $c_0$
we will now give a short proof that, unlike $c_0$,
$Y$ is not polyhedral.
Recall that a Banach space $X$ is \emph{polyhedral} is the
unit ball of every finite dimensional subspace of $X$ is a polytope.

\begin{prop}\label{prop:FV_not_polyhedral}
  The predual $Y$ of $\lipfree{\mathcal M}$ is not polyhedral.
\end{prop}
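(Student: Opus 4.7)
The plan is to derive a contradiction from the assumption of polyhedrality using an argument based on the Daugavet point $m_{pq} \in Y^*$.

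First, I would verify that $m_{pq}$ is an extreme point of $B_{Y^*}$: by a standard characterization of preserved extreme molecules in Lipschitz-free spaces, it suffices to check $[p,q] = \{p,q\}$ in $\mathcal{M}$. This is a direct computation: any $z = (x,y) \in \mathcal{M} \setminus \{p,q\}$ has $y > 0$, and $d(p,z) + d(z,q) = (y+x) + (y+1-x) = 1 + 2y > 1 = d(p,q)$, so $[p,q] = \{p,q\}$ and $m_{pq}$ is extreme.

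Second, I would argue that polyhedrality of $Y$ would force $m_{pq}$ to exhibit a structural property that is ruled out by its being a Daugavet point. A natural route is via Fonf's characterization of separable polyhedral Banach spaces in terms of strong subdifferentiability of the norm. At $-h \in S_Y$ (a canonical norming functional for $m_{pq}$, with $(-h)(m_{pq}) = h(q) = 1$), strong subdifferentiability would imply that the right derivative in direction $f \in B_Y$ converges uniformly; but the Daugavet property of $m_{pq}$ supplies molecules in $B_{Y^*}$ arbitrarily close to the weak$^*$-slices $S(-h,\varepsilon)$ and at distance $\approx 2$ from $m_{pq}$, which combined with a carefully chosen $f$ should witness non-uniform convergence of the difference quotient $(\lipnorm{-h + tf}-1)/t$ as $t \to 0^+$.

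The main obstacle is extracting a rigorous contradiction from this heuristic: the analogous limit functional $e_0^* \in c^*$ is extreme and a Daugavet point while $c$ is polyhedral, so the argument must exploit something beyond these two properties alone---most plausibly, a feature of the Lipschitz-free structure (e.g.\ the specific form of the molecules norming $-h$, which on every level $S_n$ form an infinite ``spreading family'' whose images under a suitable perturbation $f$ should witness a quantitative failure of uniform convergence in $f \in B_Y$). As a fallback, an explicit construction of a two-dimensional subspace $F = \spn(h, g) \subset Y$ with $B_F$ non-polygonal could be attempted, with $g \in Y$ concentrated near $q$; the chief technical point there is to arrange for the molecules $m_{(1,2^{-n}),p}$ to yield infinitely many extreme points of $B_{F^*}$ lying on a strictly convex curve, without being dominated by ``competing'' molecules between adjacent levels $(1,2^{-n}),(1,2^{-n+1})$.
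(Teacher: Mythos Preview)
Your proposal is not a proof but a sketch with a self-acknowledged gap, and the gap is real. As you yourself note, the combination ``extreme point of $B_{Y^*}$ that is also a Daugavet point'' cannot by itself contradict polyhedrality of $Y$: the limit functional in $c^*$ is exactly such a point and $c$ is polyhedral. Your suggested route through strong subdifferentiability of the norm at $-h$ does not obviously exploit anything beyond that combination, and the fallback of exhibiting a two-dimensional subspace with non-polygonal ball is left entirely to be worked out. So the proposal does not establish the result.

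The paper's argument is short and avoids all of this. It uses the (classical) fact that if $Y$ is polyhedral then the set $E$ of weak$^*$ strongly exposed points of $B_{Y^*}$ is a boundary for $Y$. One then writes down an explicit function $f\in S_Y$, namely $f(x,y)=x(1-y^2)$, and checks that among all molecules $m_{ab}$ with $[a,b]=\{a,b\}$ (which are the only candidates for elements of $E$), the equality $|\langle f,m_{ab}\rangle|=1$ occurs \emph{only} for $m_{pq}$. Since $m_{pq}$ is a Daugavet point it cannot be strongly exposed, hence $f$ does not attain its norm on $E$, and $E$ is not a boundary. The missing idea in your plan is precisely this: rather than trying to squeeze a contradiction out of the Daugavet property of $m_{pq}$ in isolation, one should produce a single $f\in S_Y$ whose only norming preserved-extreme molecule is $m_{pq}$, so that the failure of $m_{pq}$ to be strongly exposed kills the boundary condition directly.
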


\begin{proof}
  It suffices to prove that the set of $E$ of weak$^*$ strongly exposed
  points of $B_{\lipfree{\mathcal M}}$ is not a boundary for $Y$
  (c.f. e.g. Theorem~1.4 in \cite{MR1792984}).
  Now every weak$^*$ strongly exposed point is also a preserved
  extreme point and so, by e.g. \cite[Corollary~3.44]{Weaver2} and
  \cite[Theorem~1.1]{AP20},
  for every $\mu \in E$ there are $a,b \in \mathcal{M}$,
  $a \neq b$, with $[a,b] = \set{a,b}$ such that $\mu = m_{ab}$.
  Now, define the function $f: \mathcal{M} \to \R$ by
  \begin{equation*}
    f(x,y) := x(1 - y^2)
  \end{equation*}
  for $z = (x,y) \in \mathcal{M}$.
  It is easy to check that $f \in S_Y$.

  By construction, $|\duality{f,m_{ab}}| < 1$ for all
  $a, b$ with $[a,b] = \set{a,b},$
  except for $|\duality{f,m_{pq}}| = 1$.
  But $m_{pq}$ is a Daugavet point,
  so $E$ is not a boundary,
  and thus $Y$ is not polyhedral.
\end{proof}

\section{A superreflexive Banach space with a
  \texorpdfstring{$\Delta$}{Delta}-point}
\label{sec:superr-banach-space}

In \cite{DKR+} a renorming of $\ell_2$ was used to show
that there exists a Banach space whose norm is
asymptotically midpoint uniformly convex, but
not asymptotically uniformly convex.
We will use a slight variation of this norm to answer
Question~6.1 from \cite{ALMP} negatively;
there exists an equivalent renorming of $\ell_2$ with
a $\Delta$-point.
We will also show that this $\Delta$-point fails to be a Daugavet point
in a very strong way by providing a sequence of strongly exposed points
of the new unit ball that converges in norm to it. It is still open whether there
exists a reflexive or superreflexive space with a Daugavet point.

Let $(e_n)_{n \geq 1}$ be the usual basis in $\ell_2$ and denote
the biorthogonal elements in $\ell_2$ by $(e_n^*)_{n \geq 1}$.
We follow \cite{DKR+} and introduce an equivalent
norm on $\ell_2$ by defining the unit ball by
\begin{equation*}
  B(\ell_2,\|\cdot\|)
  :=
  \clco\left(B_{\ell_2} \cup \set{ \pm (e_1 + e_n)_{n \ge 2}} \right)
\end{equation*}
where we take the closure of the convex hull in the topology of
$\|\cdot\|_2$.
From \cite[Lemma~2.5]{DKR+} we have that
$\|\cdot\| \le \|\cdot\|_2 \le \sqrt{2}\|\cdot\|$,
that $(e_n)_{n \geq 1}$ is a normalized bimonotone basis
for $(\ell_2,\|\cdot\|)$, and that
$\|e_1 + e_n\| = 1$ for $n \ge 2$.

We trim down the $\|\cdot\|$ norm and define
for $x = \sum_{n=1}^\infty x_n e_n \in \ell_2$
\begin{equation*}
  \nnorm{x} := \max\{\|x\|, \sup_{n \ge 2}|x_1 - 2x_n|\},
\end{equation*}
and $Y := (\ell_2, \nnorm{\cdot}).$
We have $\|y\| \le \nnorm{y} \le 3\|y\|$ for all $y \in Y$.
Figure~\ref{fig:renorml2} shows a picture in $\linsp(e_1,e_n)$ of
$S_{(\ell_2,\|\cdot\|)}$ in red and $S_Y$ in blue.
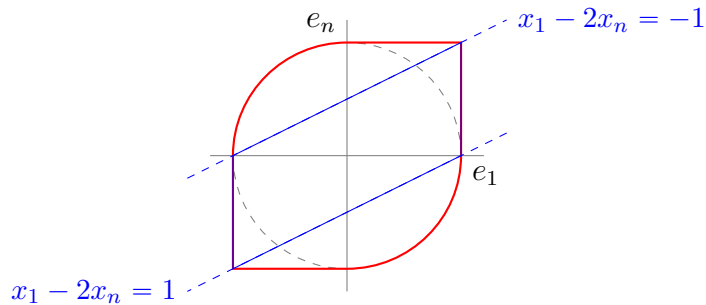
\begin{figure}[ht]
  \centering
  \begin{tikzpicture}[scale=1.5]
    \draw[gray] (-1.2,0) -- (1.2,0); 
    \draw[gray] (0,-1.2) -- (0,1.2); 
    \draw (1,0) node[below right] {$e_1$};
    \draw (0,1) node[above left] {$e_n$};
    \draw[gray, dashed] (1,0) arc(0:90:1);
    \draw[gray, dashed] (-1,0) arc(180:270:1);

    \draw[red,thick] (1,0) -- (1,1) -- (0,1) arc(90:180:1)
    -- (-1,-1) -- (0,-1) arc(270:360:1);

    \draw[blue, dashed] (-1.4,-0.2) -- (1.4,1.2)
    node[right] {\small $x_1 - 2 x_n = -1$};
    \draw[blue, dashed] (1.4,0.2) -- (-1.4,-1.2)
    node[left] {\small $x_1 - 2 x_n = 1$};

    \draw[blue] (-1,-1) -- (1,0) -- (1,1) -- (-1,0) -- (-1,-1);
  \end{tikzpicture}
  \caption{Geometric idea of the renorming}
  \label{fig:renorml2}
\end{figure}
It is clear that $\nnorm{e_1} = 1$, $\nnorm{e_n} = 2$
and $\nnorm{e_1 + e_n} = 1$ for $n \ge 2$.
These are all the ingredients needed to show that
$e_1$ is a $\Delta$-point.

To prove that $e_1$ is not a Daugavet point we will find
strongly exposed points at distance strictly less than $2$ from $e_1$.
As previously mentioned, we will actually prove something more,
as we will show that $e_1$ belongs to the closure
of the set of all strongly exposed points of $B_Y$.
This will be done in the lemmas that follow
the main theorem of this section that we will now state.

\begin{thm}\label{thm:l2_renorm_e1_delta_not_daugavet}
  Let $Y$ be the renorming of $\ell_2$ above.
  We have that both $e_1 \in S_Y$ and $e_1^* \in S_{Y^*}$
  are super $\Delta$-points, but neither of them is a Daugavet point.
\end{thm}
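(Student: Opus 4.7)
The plan is to treat the four assertions separately: $e_1$ is a super $\Delta$-point and is not a Daugavet point, and similarly for $e_1^\ast$. For the positive assertions I would exhibit explicit weakly convergent perturbations of the point in question; for the failure of the Daugavet property I would defer to the subsequent lemmas that supply strongly exposed approximations.

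First, to show that $e_1$ is a super $\Delta$-point of $B_Y$, I would rely on the basis vectors $(e_n)_{n\geq 2}$, which tend to $0$ weakly since $(e_n)$ is a basis. Recalling from \cite{DKR+} that $\|e_1+e_n\|=1$ and noting that $|(e_1+e_n)_1 - 2(e_1+e_n)_m|$ equals $1$ for every $m\geq 2$, we obtain $\nnorm{e_1+e_n}=1$, so that $e_1+e_n \in S_Y$ and $e_1+e_n \to e_1$ weakly. Consequently every weakly open neighborhood $W$ of $e_1$ in $B_Y$ eventually contains $e_1+e_n$, and $\nnorm{(e_1+e_n)-e_1} = \nnorm{e_n} = 2$, since $\|e_n\|\leq \|e_n\|_2 = 1$ while $|(e_n)_1 - 2(e_n)_n| = 2$.

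Second, to show that $e_1^\ast$ is a super $\Delta$-point of $B_{Y^\ast}$, I would exploit the fact that $Y$ is reflexive, so that weak and weak$^\ast$ topologies on $Y^\ast$ coincide. I would first verify that $\nnorm{e_n^\ast}=1$ for $n\geq 2$: the lower bound follows from $\langle e_n^\ast, e_1+e_n \rangle = 1$ with $e_1+e_n\in S_Y$, while the upper bound is obtained by noting that $\nnorm{x}\leq 1$ forces $\|x\|\leq 1$ and hence $|x_n|\leq 1$, since every extreme point of $B(\ell_2,\|\cdot\|)$ satisfies $|x_n|\leq 1$. Next, from the inequality $|x_1-2x_n|\leq \nnorm{x}$ built into the definition of $\nnorm{\cdot}$, it follows directly that $\nnorm{e_1^\ast - 2e_n^\ast}\leq 1$. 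Therefore $e_1^\ast - 2e_n^\ast \in B_{Y^\ast}$, it converges to $e_1^\ast$ weak$^\ast$, and $\nnorm{(e_1^\ast - 2e_n^\ast) - e_1^\ast} = 2\nnorm{e_n^\ast} = 2$, establishing that $e_1^\ast$ is a (weak$^\ast$) super $\Delta$-point.

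Third, to prove that neither $e_1$ nor $e_1^\ast$ is a Daugavet point, I would invoke the subsequent lemmas that produce a sequence $(y_k)\subset S_Y$ of strongly exposed points of $B_Y$ with $y_k \to e_1$ in norm, and similarly a sequence of strongly exposed points of $B_{Y^\ast}$ converging to $e_1^\ast$. For each $k$ there is a slice $S_k$ of $B_Y$ containing $y_k$ with $\diam(S_k) \to 0$; then for any $z\in S_k$ the triangle inequality yields $\nnorm{e_1 - z}\leq \nnorm{e_1 - y_k} + \diam(S_k)$, which becomes strictly less than $2$ for $k$ large enough. Thus some slice witnesses that $e_1$ is not a Daugavet point, and the same argument in $Y^\ast$ handles $e_1^\ast$. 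The main obstacle in the whole proof is hidden in these deferred lemmas: producing strongly exposed points of $B_Y$ (and $B_{Y^\ast}$) arbitrarily close to $e_1$ (resp.\ $e_1^\ast$) requires a delicate geometric analysis of how the $\ell_2$-ball interacts with the bumps at $\pm(e_1+e_n)$ and with the strip constraints $|x_1-2x_n|\leq 1$.
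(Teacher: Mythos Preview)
Your proposal is correct and follows essentially the same approach as the paper: you use the sequence $(e_1+e_n)$ to witness that $e_1$ is a super $\Delta$-point, the sequence $(e_1^\ast-2e_n^\ast)$ for $e_1^\ast$, and you defer the non-Daugavet assertions to the subsequent lemmas producing strongly exposed points near $e_1$ and $e_1^\ast$. The only cosmetic difference is that the paper obtains $\nnorm{e_n^\ast}\leq 1$ directly from the general inequality $\nnorm{\cdot}^\ast\leq\|\cdot\|^\ast$ rather than via an extreme-point argument, and it observes $\nnorm{e_1^\ast-2e_n^\ast}=1$ (not just $\leq 1$) by evaluating at $e_1$.
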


The proof of the theorem uses a few simple lemmas.
In \cite{DKR+}, explicit expressions for how
to calculate the $\|\cdot\|$-norm and its dual
were given. These will turn out to be very useful
even for the $\nnorm{\cdot}$-norm, so let us provide 
right away some more detail.

Recall that in any given vector space $V$, we have
\begin{equation*}
  \conv(A\cup B)
  =
  \{\lambda a+(1-\lambda)b:\ a\in A,\ b\in B,\ \lambda\in[0,1]\}
\end{equation*}
for every convex sets $A, B\subset V$.  In particular, $B(\ell_2,\norm{\cdot})$ is equal to the closure 
of the set
\begin{equation}\label{eqt:dense-set-renorming}
  C :=
  \{\lambda y+(1-\lambda)u :
  y \in B_{\ell_2},\ u\in \conv\{\pm(e_1+e_n),
  \ n\geq 2\},\ \lambda\in[0,1]\}.
\end{equation}
The above description of the unit ball of $(\ell_2,\|\cdot\|)$
is used in \cite[Lemma~2.5~(b)]{DKR+} to show that the norm
of $x^* = \sum_{n=1}^\infty a_n e_n^*$ in the dual is given by
\begin{equation*}
  \|x^*\| = \max \{ \|x^*\|_2, \sup_{n \ge 2} |a_1 + a_n|\}.
\end{equation*}
A more or less identical argument shows that
\begin{equation*}
  B_{Y^*} = \clco \set{
    B_{(\ell_2,\|\cdot\|)^*} \cup \{ \pm (e_1^* -  2e_n^*)_{n \ge 2}\}
  },
\end{equation*}
hence $B_{Y^*}$ is equal to the closure of the set 
\begin{equation}\label{eqt:dense-set-dual-renorming}
  D := \{\lambda y^*+(1-\lambda)u^* :
  \ y^*\in B_{(\ell_2,\|\cdot\|)^*},
  \ u^*\in \conv\{\pm(e_1^*-2e_n^*),
  \ n\geq 2\},\ \lambda\in[0,1]\}.
\end{equation}

Next, let us identify some strongly exposed points of $B_Y$
near $e_1$ and in $B_{Y^*}$ near $e_1^*$.

\begin{lem}\label{lem:seq_conv_norm_e1}
  For each $n \in \N$ define $x := x(n) \in Y$ and
  $x^* := x^*(n) \in Y^*$ by letting $k := 32n-16$ and setting
  \begin{equation*}
    x := \left(1-\frac{1}{n}\right)e_1 + \frac{1}{4n} \sum_{i=2}^{k+1} e_i
    \in Y
  \end{equation*}
  and
  \begin{equation*}
    x^* :=
    \left(1-\frac{1}{n}\right)e^*_1 + \frac{1}{4n} \sum_{i=2}^{k+1} e^*_i
    \in Y^*.
  \end{equation*}
  We have $\nnorm{x} = \|x\| = \|x\|_2 = x^*(x) = 1$,
  and $\nnorm{x^*} = \|x^*\| = \|x^*\|_2 = 1$ for each $n \in \N$.

  Furthermore,
  $\nnorm{e_1 - x} \underset{n}{\to} 0$
  and $\nnorm{e_1^* - x^*} \underset{n}{\to} 0$.
\end{lem}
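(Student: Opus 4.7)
The lemma is a direct verification whose algebraic engine is the observation that the index $k=32n-16$ is rigged so that $\|x\|_2^2 = (1-1/n)^2 + k/(16n^2) = 1$ exactly; by the same expression, $\|x^*\|_2 = 1$ and $x^*(x) = 1$. I would record this identity once and recycle it throughout.

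Next, I would establish the upper bounds $\nnorm{x} \le 1$ and $\nnorm{x^*} \le 1$. For $\nnorm{x}$, the definition $\nnorm{x} = \max(\|x\|, \sup_{i \ge 2}|x_1 - 2 x_i|)$ reduces the task to showing the supremum is at most $1$, which is immediate from a two-case split according to whether $i$ lies inside or outside $[2,k+1]$; the bound on $\|x\|$ is just $\|x\| \le \|x\|_2 = 1$. For $\nnorm{x^*}$, I would exploit the inclusion $B_Y \subseteq B(\ell_2, \|\cdot\|)$ (recorded in the paper as $\|\cdot\| \le \nnorm{\cdot}$), which dualizes to $\nnorm{x^*} \le \|x^*\|_{(\ell_2,\|\cdot\|)^*}$, and then invoke the DKR+ dual formula $\|x^*\|_{(\ell_2,\|\cdot\|)^*} = \max(\|x^*\|_2, \sup_{i \ge 2}|a_1 + a_i|)$, which is again $\le 1$ by the analogous case split on the coordinates $a_i$ of $x^*$. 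Combined with $x^*(x) = 1$, these two upper bounds force $\nnorm{x} = \nnorm{x^*} = 1$ and thereby every equality in the two displayed chains.

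For the convergence, the same identity gives $\|e_1 - x\|_2^2 = 1/n^2 + k/(16n^2) = 2/n$, so $\nnorm{e_1 - x} \to 0$ follows from the comparison $\nnorm{\cdot} \le 3\|\cdot\| \le 3\|\cdot\|_2$ already recorded in the paper. The dual analogue uses $\|e_1^* - x^*\|_2 = \sqrt{2/n}$ together with the dual comparison $\nnorm{\cdot}_{Y^*} \le \|\cdot\|_{(\ell_2,\|\cdot\|)^*} \le \sqrt{2}\|\cdot\|_2$, obtained by dualizing $\|\cdot\|_2 \le \sqrt{2}\|\cdot\|$. There is no substantive obstacle; the only point requiring mild care is getting the duality inequalities the right way round—since $\nnorm{\cdot}$ \emph{dominates} $\|\cdot\|$ on $\ell_2$, the corresponding dual norms go the other way, which is exactly what is needed to control $\nnorm{x^*}$ and $\nnorm{e_1^* - x^*}$ from above.
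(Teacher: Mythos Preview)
Your proposal is correct and follows essentially the same route as the paper: both hinge on the identity $(1-1/n)^2 + k/(16n^2)=1$, then bound $\nnorm{x}$ via the definition and $\nnorm{x^*}$ via the duality inequality $\nnorm{\cdot}_{Y^*}\le\|\cdot\|_{(\ell_2,\|\cdot\|)^*}$ together with the DKR+ formula, and conclude all equalities from $x^*(x)=1$; the convergence is handled exactly as you describe. The only cosmetic difference is that the paper computes $\|x^*\|$ first and then reads off $\nnorm{x}=\|x\|$, whereas you bound both from above independently before invoking $x^*(x)=1$.
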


\begin{proof}
  We have
  \begin{equation*}
    x^*(x) = \|x\|^2_2 = \left(1-\frac{1}{n}\right)^2 + k \cdot \frac{1}{16 n^2}
    = 1 - \frac{2}{n} + \frac{1}{n^2} + \frac{2n-1}{n^2}
    = 1.
  \end{equation*}
  As for the dual we always have $\nnorm{x^*} \le \|x^*\|$
  and if we write $x^* = \sum_{i=1}^\infty a_i e_i^*$, then
  by Lemma~2.5~(b) in \cite{DKR+},
  \begin{equation*}
    \|x^*\| = \max\set{ \|x^*\|_2,\sup_{i\ge 2} |a_1 + a_i|}
    = \max\set{1,1-\frac{3}{4n} } = 1.
  \end{equation*}
  Since $|(1-\frac{1}{n})-\frac{2}{4n}| < 1$ we
  also have $\nnorm{x} = \|x\| \le \|x\|_2 = 1$
  (Lemma~2.5~(a) in \cite{DKR+}).
  Hence $\nnorm{x} = \nnorm{x^*} = 1$.

  Finally
  \begin{equation*}
    \norm{ e_1 - x }_2
    =
\norm{\frac{1}{n}e_1-\frac{1}{4n}\sum_{i=2}^{k+1}e_i}_2= \frac{\sqrt{16+k}}{4n} =\sqrt{\frac{2}{n}}
  \end{equation*}
  and this expression tends to $0$. Hence $\nnorm{e_1-x}\to_n 0$. 
  The calculation for $\nnorm{e_1^* - x^*} \to_n 0$ is similar.
\end{proof}

\begin{lem}\label{lem:xn_close_to_e1_strexp}
  Let $n \in \mathbb{N}$.
  With $x := x(n) \in S_Y$ and $x^* := x^*(n) \in S_{Y^*}$ as in the
  previous lemma we have that $x$ is strongly exposed by $x^*$.
\end{lem}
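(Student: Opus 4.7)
The plan is to reduce strong exposure in $(Y,\nnorm{\cdot})$ to the classical strong exposure in $(\ell_2,\|\cdot\|_2)$, which is immediate from uniform convexity. Since $\nnorm{\cdot}$ and $\|\cdot\|_2$ are equivalent norms on $\ell_2$, it suffices to prove that every sequence $(y^{(m)})$ in $B_Y$ with $x^*(y^{(m)}) \to 1$ satisfies $\|y^{(m)} - x\|_2 \to 0$.

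The key observation is the inclusion $B_Y \subseteq B_{(\ell_2, \|\cdot\|)}$, the latter being by \eqref{eqt:dense-set-renorming} the $\|\cdot\|_2$-closure of the set $C$. First I will evaluate $x^*$ on the extreme points $\pm(e_1+e_j)$, $j\geq 2$: since $x^*$ is supported on $\set{1,\ldots,k+1}$ with the explicit values from Lemma~\ref{lem:seq_conv_norm_e1}, a direct computation gives $x^*(e_1+e_j) \leq 1 - \tfrac{3}{4n}$ (with equality for $2 \leq j \leq k+1$) and $x^*(-(e_1+e_j)) \leq 0$. Consequently $x^*(u) \leq 1 - \tfrac{3}{4n}$ for every $u \in \conv\set{\pm(e_1+e_j) : j\geq 2}$.

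Now, given $y \in B_Y$ with $x^*(y) \geq 1 - \varepsilon$ for small $\varepsilon > 0$, I will approximate $y$ by some $y' = \lambda z + (1-\lambda)u \in C$ within $\|\cdot\|_2$-distance $\eta$, so that $x^*(y') \geq 1 - \varepsilon - \eta$. Combining this with the bound $x^*(u) \leq 1 - \tfrac{3}{4n}$ and $x^*(z) \leq \|z\|_2 \leq 1$ forces $1 - \lambda \leq \tfrac{4n}{3}(\varepsilon + \eta)$, from which one deduces $x^*(z) \geq 1 - O(n(\varepsilon + \eta))$. Since $x^*$ coincides with $x$ regarded as an element of $\ell_2$ of $\ell_2$-norm $1$, the Hilbert identity
\begin{equation*}
  \|z - x\|_2^2 = \|z\|_2^2 - 2\,x^*(z) + 1 \leq 2\bigl(1 - x^*(z)\bigr)
\end{equation*}
yields $\|z - x\|_2 = O(\sqrt{n(\varepsilon+\eta)})$. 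A triangle-inequality estimate combining $\|y - y'\|_2 \leq \eta$, $\|y' - \lambda z\|_2 = (1-\lambda)\|u\|_2 \leq \sqrt{2}(1-\lambda)$ and $\|\lambda z - x\|_2 \leq (1-\lambda) + \|z - x\|_2$, followed by letting $\eta \to 0$, will give $\|y - x\|_2 \leq C_n \sqrt{\varepsilon}$ for some constant $C_n > 0$. Since $\nnorm{\cdot} \leq 3\|\cdot\|_2$, this shows that $x^*$ strongly exposes $x$ in $B_Y$.

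The main obstacle is the bookkeeping in passing from the statement ``$y$ lies in the $\|\cdot\|_2$-closure of $C$'' to effective bounds on the parameters $\lambda, z, u$ in an approximating convex decomposition. Otherwise, the argument is simply a transfer of the standard uniform-convexity proof for $\ell_2$ through the convex-hull structure of $B_{(\ell_2, \|\cdot\|)}$: the crucial feature is the fixed gap $\tfrac{3}{4n}$ between $\sup_u x^*(u)$ and $1$, which is precisely what allows one to isolate the ``$B_{\ell_2}$-component'' $z$ in any near-maximizer of $x^*$ and then to control its proximity to $x$ by Hilbert-space convexity.
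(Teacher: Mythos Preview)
Your proposal is correct and follows essentially the same route as the paper: use the inclusion $B_Y\subset B_{(\ell_2,\|\cdot\|)}=\overline{C}$, exploit the fixed gap $x^*(u)\leq 1-\tfrac{3}{4n}$ on the ``spike'' part of $C$ to force $1-\lambda$ small and $x^*(z)$ close to $1$, and then finish with the uniform convexity of $\ell_2$. The only cosmetic differences are that the paper works directly on the dense set $C$ (avoiding your $\eta$-approximation step) and observes the slightly cleaner bound $x^*(z)>1-\delta$ via a one-line convexity argument, whereas you pass through $1-\lambda$ and the explicit Hilbert identity; both lead to the same conclusion.
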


\begin{proof}  
  Recall that $\|x^*\| = \nnorm{x^*} = 1$.
  The slice $S(B_{\nnorm{\cdot}}, x^*, \delta)$
  is contained in $S(B_{\| \cdot \|}, x^*,\delta)$,
  so it is enough to show that
  $\nnorm{ x - z }\to 0$ uniformly on $S(B_{\| \cdot \|}, x^*, \delta)
    \cap C$ as $\delta\to 0$, where $C$ is the set given by \eqref{eqt:dense-set-renorming},
  and whose closure is equal to $B(\ell_2,\norm{\cdot})$.

  Let $\varepsilon > 0$.
  By uniform convexity there exists $\delta > 0$
  such that $\|x - y\|_2 < \varepsilon$
  whenever $\|x + y\|_2 > 2 - \delta$
  and $\|x\|_2, \|y\|_2 \le 1$.
  We may assume that $\delta < \min \set{\varepsilon,\frac{3}{4n}}$.
  Let $z := \lambda y + (1-\lambda)u$, where $0 \le \lambda \le 1$,
  $y \in B_{\ell_2}$ and $u \in \conv\{\pm (e_1 + e_i)_{i \ge 2}\}$.
  We have
  \begin{equation*}
    |x^*(u)| \le \sup_{i \ge 2} |x^*(e_1 + e_i)| \le 1 - \frac{3}{4n}
    < 1 - \delta.
  \end{equation*}
  So if $x^*(z) > 1 - \delta$, this implies that $x^*(y) > 1 - \delta$.
  Hence
  \begin{equation*}
    \|x + y\|_2 \ge x^*(x + y) > 2 - \delta
  \end{equation*}
  and thus $\|x - y\|_2 < \varepsilon$.
  We also get
  \begin{equation*}
    1 - \delta < \lambda x^*(y) + (1-\lambda)x^*(u)
    \le \lambda + \left(1-\frac{3}{4n}\right)(1-\lambda)
    = 1 - \frac{3}{4n}(1 - \lambda)
  \end{equation*}
  so that $1 - \lambda < \frac{4n}{3}\delta$.
  Now
  \begin{align*}
    \frac{1}{3}\nnorm{ x - z }
    &\le
    \| x - z \|
    \le
    \| x - y\| + \|y - \lambda y\| + \|(1-\lambda)u\|
    \\
    &\le
    \sqrt{2}\|x - y\|_2 + (1-\lambda) + (1-\lambda)
    <
    2\varepsilon + \frac{8n}{3}\delta
    \le
    \left(2 + \frac{8n}{3}\right)\varepsilon,
  \end{align*}
  and since $n$ is a fixed constant, the conclusion follows. 
\end{proof}

\begin{lem}\label{lem:xn_close_to_e1_strexp_dual}
  Let $n \in \mathbb{N}$.
  With $x := x(n) \in S_Y$ and $x^* := x^*(n) \in S_{Y^*}$ as
  in Lemma~\ref{lem:seq_conv_norm_e1}
  we have that $x^*$ is strongly exposed by $x$.
\end{lem}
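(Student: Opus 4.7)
The plan is to mirror Lemma~\ref{lem:xn_close_to_e1_strexp}, now working in the dual $Y^*$ and exploiting the representation of $B_{Y^*}$ as the closure of the set $D$ from~\eqref{eqt:dense-set-dual-renorming}. Since $\nnorm{x}=\nnorm{x^*}=x^*(x)=1$ by Lemma~\ref{lem:seq_conv_norm_e1}, it is enough to show that $\nnorm{z^* - x^*} \to 0$ uniformly for $z^* \in S(B_{Y^*},x,\delta)\cap D$ as $\delta\to 0$. A generic element of that intersection can be written $z^* = \lambda y^* + (1-\lambda)u^*$ with $y^* \in B_{(\ell_2,\|\cdot\|)^*}$ and $u^* \in \conv\set{\pm(e_1^* - 2e_i^*) : i \geq 2}$, and the strategy is to separate the ``rounded'' contribution $y^*$ from the ``polyhedral'' contribution $u^*$.

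The first key estimate is the bound $|u^*(x)| \leq 1 - 1/n$ for every such $u^*$. This uses only the explicit values $e_1^*(x) = 1 - 1/n$ and $e_i^*(x) \in \set{1/(4n), 0}$ for $i\geq 2$, together with convexity. Combined with $y^*(x) \leq 1$ and the hypothesis $z^*(x) > 1-\delta$, it forces $1-\lambda < n\delta$ as soon as $\delta$ is small enough relative to $1/n$, and consequently $y^*(x) \to 1$ uniformly as $\delta \to 0$ (for the fixed $n$).

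With $y^*(x)$ close to $1$, I would apply the parallelogram identity in the Hilbert norm (using $\|y^*\|_2 \leq \|y^*\| \leq 1$ and $\|x^*\|_2 = 1$) to obtain $\|y^* - x^*\|_2 = O(\sqrt{\delta})$. The conversion back to $\nnorm{\cdot}$ goes through two inequalities: the dual norm formula from \cite[Lemma~2.5~(b)]{DKR+} together with the trivial bound $|w^*(e_j)| \leq \|w^*\|_2$ gives $\|w^*\| \leq 2\|w^*\|_2$ for every $w^*$, and the inclusion $B_Y \subset B_{(\ell_2,\|\cdot\|)}$ dually implies $\nnorm{\cdot} \leq \|\cdot\|$ on the dual. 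Hence $\nnorm{y^* - x^*} = O(\sqrt{\delta})$. Since $\pm(e_1^* - 2e_i^*) \in B_{Y^*}$ by construction, $\nnorm{u^*} \leq 1$, and the convex decomposition of $z^*$ yields
\[
\nnorm{z^* - x^*} \leq \nnorm{y^* - x^*} + (1-\lambda)(\nnorm{u^*}+\nnorm{x^*}) \leq O(\sqrt{\delta}) + 2n\delta,
\]
which tends to $0$ as desired.

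The main technical obstacle, compared with the primal setting of Lemma~\ref{lem:xn_close_to_e1_strexp}, is that the dual norm on $(\ell_2,\|\cdot\|)^*$ is not itself uniformly convex, so uniform convexity cannot be invoked directly on the dual. The argument must instead be routed through the ambient Hilbert norm $\|\cdot\|_2$ and then pulled back via the explicit dual-norm formula together with the inequality $\nnorm{\cdot} \leq \|\cdot\|$; once this routing is in place, the rest is bookkeeping analogous to the primal case.
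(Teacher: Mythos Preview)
Your proposal is correct and follows essentially the same approach as the paper: decompose $z^*\in D$ as $\lambda y^*+(1-\lambda)u^*$, use $|u^*(x)|\leq 1-1/n$ to force both $1-\lambda<n\delta$ and $y^*(x)>1-\delta$, then route through the Hilbert norm (the paper phrases this as uniform convexity of $\ell_2$ rather than the parallelogram identity, and uses the sharper constant $\sqrt{2}$ in place of your $2$ for $\|\cdot\|\leq\sqrt{2}\|\cdot\|_2$) before converting back via $\nnorm{\cdot}\leq\|\cdot\|$. The only cosmetic difference is that the paper splits the final triangle inequality as $\nnorm{x^*-y^*}+(1-\lambda)\nnorm{y^*}+(1-\lambda)\nnorm{u^*}$ rather than your $\nnorm{y^*-x^*}+(1-\lambda)(\nnorm{u^*}+\nnorm{x^*})$, but both give the same conclusion.
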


\begin{proof}
  Again, it is enough to show that $\nnorm{ x^* - z^* } \to 0$ uniformly on 
  $S(x, \delta) \cap D$ as $\delta\to 0$,
  where $D$ is the set given by \eqref{eqt:dense-set-dual-renorming},
  and whose closure is equal to $B_{Y^*}$.
  Let $\varepsilon > 0$.
  By uniform convexity there exists $\delta > 0$
  such that $\|u - v\|_2 < \varepsilon$
  whenever $\|u + v\|_2 > 2 - \delta$
  and $\|u\|_2, \|v\|_2 \le 1$.
  We may assume that $\delta < \min(\varepsilon,\frac{1}{n})$.
  Let $z^* := \lambda y^* + (1-\lambda)u^*$, where $0 \le \lambda \le 1$,
  $y^* \in B_{(\ell_2,\|\cdot\|)^*}$ and
  $u^* \in \conv\{\pm (e_1^* - 2e_i^*)_{i \ge 2}\}$. We have
  \begin{equation*}
    |u^*(x)| \le \sup_{i \ge 2} |(e_1^* - 2e_i^*)(x)|
    \le 1 - \frac{1}{n}
    < 1 - \delta.
  \end{equation*}
  So if $z^*(x) > 1 - \delta$, this implies that $y^*(x) > 1 - \delta$.
  Note that $\|y^*\|_2 \le \|y^*\| \le 1$ and $\|x^*\|_2 = \|x^*\| = 1$.
  Hence
  \begin{equation*}
    \|y^* + x^*\|_2 \ge (x^*+y^*)(x) > 2 - \delta
  \end{equation*}
  and thus $\|y^* - x^*\|_2 < \varepsilon$.
  But then $\|y^* - x^*\| \le \sqrt{2} \|y^*-x^*\|_2 \le \sqrt{2}\varepsilon$.
  We also get
  \begin{equation*}
    1 - \delta < \lambda y^*(x) + (1-\lambda)u^*(x)
    \le \lambda + \left(1-\frac{1}{n}\right)(1-\lambda)
    = 1 - \frac{1}{n}(1 - \lambda)
  \end{equation*}
  so that $1 - \lambda < n \delta$.
  Finally,
  \begin{align*}
    \nnorm{ x^* - z^* }
    &\le
    \nnorm{ x^* - y^*}
    +
    (1-\lambda)
    \nnorm{ y^*}
    +
    (1-\lambda)
    \nnorm{ u^* }
    \\
    &\le
    \| x^* - y^* \|
    +
    2(1-\lambda)
    \\
    &\le
    \sqrt{2}\varepsilon + 2n\delta
    \le
    \left(\sqrt{2} + 2n \right)\varepsilon,
  \end{align*}
   and we are done.
\end{proof}

We are now ready to prove the main theorem of this section.

\begin{proof}[Proof of Theorem~\ref{thm:l2_renorm_e1_delta_not_daugavet}]
  By Lemmas~\ref{lem:seq_conv_norm_e1}
  and \ref{lem:xn_close_to_e1_strexp}
  there exists a sequence $(x(n))_{n \geq 1}$
  of strongly exposed points in $S_Y$ converging
  to $e_1$ in norm, so $e_1$ is clearly not a Daugavet point.
  Similarly, by Lemmas~\ref{lem:seq_conv_norm_e1}
  and \ref{lem:xn_close_to_e1_strexp_dual}
  there exists a sequence $(x^*(n))_{n \geq 1}$
  of strongly exposed points in $S_{Y^*}$ converging
  to $e_1^*$ in norm, so $e_1^*$ is clearly not a Daugavet point.

  It is quite obvious that $x\in S_X$ is a super
  $\Delta$-point if and only if there is a net $(x_\alpha)_{\alpha\in\mathcal{A}}\subset S_X$
  such that $x_\alpha\to x$ weakly and $\norm{x-x_\alpha}\to 2$.
  This was observed in \cite[Proposition~3.4]{MPRZ}. Moreover, if 
  $X^*$ is separable, then we can clearly replace the net with a sequences in
  this characterization.
  Thus, taking $x = e_1$ and $x_n = e_1 + e_n$, $n \ge 2$,
  it follows that $e_1$ is a super $\Delta$-point.

  Finally, let us show that $e_1^*$ is a super $\Delta$-point.
  We have $\nnorm{x^*} \le \|x^*\|$ for all $x^* \in Y^*$.
  In particular, $\nnorm{e_n^*} \le \|e_n^*\| = 1$ for all $n \in \N$.
  For $e_1^*$ we have $e_1^*(e_1) = 1$ and for
  $n \ge 2$ we have and $e_n^*(e_1 + e_n) = 1$
  so $e_n^* \in S_{Y^*}$ for all $n \in \N$.
  Next, we have for $x \in Y$ that
  \begin{equation*}
    |(e_1^* - 2e_n^*)(x)|
    =
    |x_1 - 2x_n| \le \nnorm{x}
  \end{equation*}
  and since $(e_1^* - 2e_n^*)(e_1) = 1$ we get
  $\nnorm{e_1^* - 2e_n^*} = 1$. As a conclusion,
  the sequence $(e_1^* - 2e_n^*)_{n\geq 1}\subset S_{Y^*}$
  converges weakly to $e_1^*$ and satisfies 
  \begin{equation*}
    \nnorm{e_1^* - (e_1^* - 2e_n^*)} = 2\nnorm{e_n^*}
    = 2,
  \end{equation*}
  for every $n\geq 2$, so $e_1^*$ is a super $\Delta$-point
  in $B_{Y^*}$.
\end{proof}

We can actually say a bit more about how the points
$e_1$ and $e_1^*$ sit in their respective unit balls
-- they are both extreme points.

\begin{prop}\label{prop:e1_e1star_ext}
  We have that $e_1 \in \ext B_Y$ and $e_1 + e_n \in \ext B_Y$
  for all $n \ge 2$.
  Similarly, $e_1^* \in \ext B_{Y^*}$ and
  $e_1^* - 2e_n^* \in \ext B_{Y^*}$ for all $n \ge 2$.
\end{prop}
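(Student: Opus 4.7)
The plan is to exploit the two-part structure of $\nnorm{\cdot}$: its unit ball is carved out by the norm $\norm{\cdot}$ together with the linear constraints $\abs{x_1 - 2x_m} \leq 1$ for $m \geq 2$. In each of the four cases I will apply the standard face technique, namely decompose the candidate extreme point as $\tfrac{1}{2}(u+v)$ and use norming functionals or vectors to pin down the coordinates of the midpoints one by one.

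For $e_1 \in \ext B_Y$: one first checks that $\nnorm{e_1^*}_{Y^*} = 1$ (since $\abs{e_1^*(x)} = \abs{x_1} \leq \norm{x} \leq \nnorm{x}$ using the dual formula for $\norm{\cdot}$ recalled just after Theorem~\ref{thm:l2_renorm_e1_delta_not_daugavet}). Given $e_1 = \tfrac{1}{2}(u+v)$ with $\nnorm{u},\nnorm{v} \leq 1$, applying $e_1^*$ forces $u_1 = v_1 = 1$. The crucial inequality $\abs{1 - 2u_m} \leq \nnorm{u} \leq 1$ then confines $u_m$ to $[0,1]$, and likewise $v_m \in [0,1]$; since $u_m + v_m = 0$, both vanish and $u = v = e_1$. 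For $e_1 + e_n$ with $n \geq 2$, the same recipe applies with one extra step: alongside $e_1^*$, one exploits the functional $e_1^* - 2e_n^*$, already shown in the proof of Theorem~\ref{thm:l2_renorm_e1_delta_not_daugavet} to have dual norm one and to attain $-1$ at $e_1 + e_n$. This second functional pins down $u_n = v_n = 1$, after which the coordinate inequality handles the remaining indices $m \neq 1,n$ exactly as before.

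For the dual statements the strategy is symmetric: given a decomposition $\tfrac{1}{2}(u^* + v^*) = e_1^*$ (respectively $e_1^* - 2e_n^*$), one tests it against vectors of $S_Y$ rather than functionals. The natural choices are $e_1$ and $e_1 + e_m$ for $m \geq 2$, which all lie in $S_Y$ by the computations in Theorem~\ref{thm:l2_renorm_e1_delta_not_daugavet} and whose linear span contains every basis vector $e_m$. A face argument at $e_1$ yields $u^*(e_1) = v^*(e_1) = 1$, and face arguments at the $e_1 + e_m$ then recover $u^*(e_m) = v^*(e_m)$ for every $m \geq 2$; since $(e_m)$ is a basis of $Y$, this forces $u^* = v^*$. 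For the extremality of $e_1^* - 2e_n^*$, one uses in addition that $-(e_1 + e_n) \in S_Y$ achieves value $1$ against $e_1^* - 2e_n^*$, which fixes $u^*(e_n) = v^*(e_n) = -2$.

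There is no substantial obstacle: the main work was already done in Section~\ref{sec:superr-banach-space} when the relevant norming relations were established. What remains is to identify the correct norming witnesses ($e_1^*$ and $e_1^* - 2e_n^*$ on the primal side; $e_1$ and $e_1 + e_m$ on the dual side) and to notice that the coordinatewise inequalities defining $\nnorm{\cdot}$ are sharp enough to squeeze out the remaining coordinates. Once these ingredients are in place, the proof reduces to short bookkeeping of signs and coordinates.
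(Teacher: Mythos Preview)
Your proof is correct and follows essentially the same route as the paper's. Both arguments amount to showing that $B_Y$ (respectively $B_{Y^*}$) is contained in the half-spaces $\{e_1^*\leq 1\}$, $\{e_n^*\leq 1\}$, $\{\pm(e_1^*-2e_n^*)\leq 1\}$ (respectively $\{e_1\leq 1\}$, $\{e_n\leq 2\}$, $\{e_1+e_n\leq 1\}$), and then reading off extremality of the candidate points from these coordinate constraints via the standard face argument; the paper just phrases this as half-space inclusions rather than as applications of norming functionals, but the content is identical.
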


\begin{proof}
  Let $x := (x_n)_{n\geq1} \in \ell_2$.
  We have the following easy facts.
  \begin{enumerate}
  \item\label{item:e1_extreme_12}
    If $x_n>1$ for some $n \in \N$, then $x \notin B(\ell_2,\|\cdot\|)$.
  \item\label{item:e1_extreme_3}
    If $x_1=1$, and if $x_n<0$ for some $n\geq 2$,
    then $x\notin B_Y$.
  \end{enumerate}
  Let us see why \ref{item:e1_extreme_12} holds.
  We have
  \begin{equation*}
    B_{\ell_2} \cup \{\pm(e_1 +  e_n),\ n\geq 2\}
    \subset
    \{e_1^*\leq1\} \cap \bigcap_{n\geq 2} \{e_n^*\leq 1\},
  \end{equation*}
  and since this set is clearly (weakly) closed and convex,
  we also have
  \begin{equation*}
    B\subset \{e_1^*\leq1\}\cap \bigcap_{n\geq 2} \{e_n^*\leq 1\},
  \end{equation*}
  and \ref{item:e1_extreme_12} follows.
  \ref{item:e1_extreme_3} is clear by definition of $\nnorm{\cdot}$.
  From this it readily follows that $e_1$ and $e_1 + e_n$, $n \ge 2$,
  are extreme points in $B_Y$.

  Next, let $x^* := (x_n)_{n\geq1} \in \ell_2$.
  For the dual case we have following.
  \begin{enumerate}[resume]
  \item\label{item:e1star_extreme_12}
    If $x_1>1$ or $x_n > 2$ for some $n \in \N$, then $x^* \notin B_{Y^*}$.
  \item\label{item:e1star_extreme_3}
    If $x_1=1$, and if $x_n > 0$ for some $n\geq 2$,
    then $x^* \notin B_{Y^*}$.
  \end{enumerate}
  Similarly to the above, we have
  \begin{equation*}
    B_{(\ell_2,\|\cdot\|)^*} \cup \{\pm(e^*_1 -  2e^*_n),\ n\geq 2\}
    \subset
    \{e_1 \leq 1\} \cap \bigcap_{n\geq 2} \{e_n \leq 2\},
  \end{equation*}
  and since this set is clearly (weakly) closed and convex,
  we also have
  \begin{equation*}
    B_{Y^*} \subset \{e_1 \leq1\}\cap \bigcap_{n\geq 2} \{e_n \leq 2\},
  \end{equation*}
  and \ref{item:e1star_extreme_12} follows.
  \ref{item:e1star_extreme_3} is clear
  since $\nnorm{e_1 + e_n} = 1$ and $x^*(e_1 + e_n) = 1 + x_n$.

  From \ref{item:e1star_extreme_12} and \ref{item:e1star_extreme_3} it
  readily follows that
  $e_1^*$ and $e_1^* - 2e_n^*$ are extreme points in $B_{Y^*}$.
\end{proof}

Using notation from \cite{MPRZ} we can say more about
$e_1$ and $e_1^*$.
If $C$ is a convex set, then a given subset
$D$ of $C$ is a
\emph{convex combination of relatively weakly open subsets}
(\emph{ccw} for short) of $C$, if $D$ is of the form
\begin{equation*}
  D := \sum_{i=1}^n \lambda_i W_i,
\end{equation*}
where $n \in \N$, $\lambda_i \in (0,1]$ for $i = 1,\ldots,n$,
$\sum_{i=1}^n \lambda_i = 1$ and $W_i$ are relatively weakly open
subsets of $C$.

Then we have that $e_1$ and $e_1^*$ are not only super $\Delta$-points,
but actually ccw $\Delta$-points in the sense of \cite{MPRZ}.

\begin{cor}\label{cor:e1_e1s_ccw_delta}
  If $C$ is a ccw of $B_Y$ such that $e_1 \in C$,
  then $\sup_{y \in C} \nnorm{e_1 - y} = 2$.
  Similarly,
  if $D$ is a ccw of $B_{Y^*}$ such that $e_1^* \in D$,
  then $\sup_{y^* \in D} \nnorm{e_1^* - y^*} = 2$.
\end{cor}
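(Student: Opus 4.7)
The plan is to exploit two ingredients already established in this section: first, that $e_1$ (resp.\ $e_1^*$) is an extreme point of $B_Y$ (resp.\ $B_{Y^*}$) by Proposition~\ref{prop:e1_e1star_ext}; and second, that $e_1$ is a super $\Delta$-point with the weakly convergent witnessing sequence $(e_1+e_k)_{k\geq 2}\subset B_Y$ satisfying $\nnorm{e_1-(e_1+e_k)}=\nnorm{e_k}=2$ (and similarly that $e_1^*$ admits the witnessing sequence $(e_1^*-2e_k^*)_{k\geq 2}$), as shown in the proof of Theorem~\ref{thm:l2_renorm_e1_delta_not_daugavet}. The idea is that extremeness collapses a ccw decomposition of any neighborhood of $e_1$ into a simultaneous weak neighborhood, and then the super $\Delta$-witnesses take care of the rest.

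More precisely, given $C=\sum_{i=1}^n\lambda_iW_i$ with $e_1\in C$, I would first write $e_1=\sum_{i=1}^n\lambda_ix_i$ with $x_i\in W_i\subset B_Y$. A standard iteration of the extreme-point property (isolating one summand at a time in order to view $e_1$ as a proper convex combination of two elements of $B_Y$) forces $x_i=e_1$ for every $i$, so each $W_i$ is a relatively weakly open neighborhood of $e_1$ in $B_Y$. Since the family $\{W_i\}_{i=1}^n$ is finite and $e_1+e_k\to e_1$ weakly, I can fix $k$ large enough that $e_1+e_k\in\bigcap_{i=1}^n W_i$; writing $e_1+e_k=\sum_{i=1}^n\lambda_i(e_1+e_k)$ then places this point in $C$, and $\nnorm{e_1-(e_1+e_k)}=2$. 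Together with the trivial bound $\nnorm{e_1-y}\leq 2$ for $y\in B_Y$, this forces the supremum to equal $2$.

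The argument for $e_1^*$ in $B_{Y^*}$ is completely analogous: the extreme-point reduction again relies on Proposition~\ref{prop:e1_e1star_ext}, and the super $\Delta$-witnesses are $(e_1^*-2e_k^*)_{k\geq 2}$. The one thing to verify is that these converge to $e_1^*$ in the weak topology of $Y^*$, but this is automatic since $Y$ is isomorphic to $\ell_2$ and hence reflexive, so the weak topology on $Y^*$ coincides with its weak$^*$ topology and $(e_k^*)$ is weak$^*$-null. No serious obstacle is expected; the corollary is essentially a soft consequence of combining the extreme-point property of $e_1$ and $e_1^*$ with the explicit super $\Delta$-witnessing sequences, the former reducing a ccw to a single weak neighborhood and the latter exhibiting the required distance $2$.
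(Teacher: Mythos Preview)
Your argument is correct and is essentially the same approach as the paper's: both rely on the combination of Proposition~\ref{prop:e1_e1star_ext} (extremality of $e_1$ and $e_1^*$) with the super $\Delta$-property from Theorem~\ref{thm:l2_renorm_e1_delta_not_daugavet}. The only difference is packaging: the paper invokes \cite[Proposition~3.13]{MPRZ}, which states in general that an extreme super $\Delta$-point is a ccw $\Delta$-point, whereas you unpack that proposition's proof in this concrete setting (extreme point forces $x_i=e_1$ for all $i$, hence $\bigcap_i W_i$ is a weak neighborhood of $e_1$, and the witnessing sequence enters it). Your version is self-contained; the paper's is shorter by citation.
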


\begin{proof}
  That super $\Delta$-points that are extreme
  are ccw $\Delta$-points is proved in \cite[Proposition~3.13]{MPRZ}
  (see also the remark following the proposition).
\end{proof}

To end the section, let us emphasize that the points $e_1$ and $e_1^*$ satisfy the strongest possible $\Delta$-property in $Y$ and $Y^*$ as they are both ccw $\Delta$, but that they fail to be Daugavet-points in an extreme way as they both belong to the closure of the set of all strongly exposed points in their respective unit balls. In particular, this example provides a positive answer to \cite[Question~7.12]{MPRZ}.

\section{Renorming any Banach space to have a
  \texorpdfstring{$\Delta$}{Delta}-point}
\label{sec:renorm-Banach-spaces}

From \cite[Corollary~6.10]{ALMP} we know that
no finite dimensional Banach space admits a $\Delta$-point.
The following theorem, which is the main theorem in this section,
highlights that the question of whether or not a Banach space
contains a $\Delta$-point is very much an isometric and
not an isomorphic question.
The proof combines ideas and results from the previous two sections.

\begin{thm}\label{thm:all_inf_dim_renorm_delta}
  Let $X$ be an infinite dimensional Banach space.
  Then the following holds:
  \begin{enumerate}
  \item\label{item:renorm1}
    If $X$ fails the Schur property, and in particular
    if $X$ does not contain a copy of $\ell_1$,
    then there exists an equivalent norm on $X$ for which
    $X$ contains a super $\Delta$-point;
  \item\label{item:renorm2}
    If $X$ contains a copy of $\ell_1$,
    then there exists an equivalent norm on $X$ for which
    $X$ contains a $\Delta$-point.
  \end{enumerate}
\end{thm}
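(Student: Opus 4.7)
The plan is to combine the ideas of Sections~\ref{sec:separable-dual-space} and~\ref{sec:superr-banach-space} via suitable embeddings. For~\ref{item:renorm1} I would mimic the renorming of $\ell_2$ from Section~\ref{sec:superr-banach-space}, replacing the standard basis by an appropriate weakly null basic sequence of $X$. For~\ref{item:renorm2} I would transport the norm of $\lipfree{\mathcal M}$ onto an $\ell_1$-copy inside $X$ and lift it to all of $X$ via a classical norm-extension theorem.

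For~\ref{item:renorm1}, if $X$ fails the Schur property then by Bessaga--Pe{\l}czy\'nski it contains a normalized weakly null basic sequence, which I will view as the tail $(x_n)_{n\geq 2}$ of a normalized basic sequence $(x_n)_{n\geq 1}\subset X$. Let $(x_n^*)\subset [x_n]^*$ be the biorthogonal functionals, which are uniformly bounded, and let $\tilde x_n^*\in X^*$ be Hahn--Banach extensions. Mirroring Section~\ref{sec:superr-banach-space}, I would first enlarge the unit ball by
$$
B:=\clco\!\left(B_X\cup\set{\pm(x_1+x_n):n\geq 2}\right),
$$
obtaining an equivalent norm $|\cdot|$ on $X$ with $|x_1+x_n|\leq 1$, and then trim it via
$$
\nnorm{x}:=\max\!\left\{|x|,\;\sup_{k\geq 2}|\tilde x_1^*(x)-2\tilde x_k^*(x)|\right\}.
$$
A direct computation patterned on the lemmas of Section~\ref{sec:superr-banach-space} then yields $\nnorm{x_1}=\nnorm{x_1+x_n}=1$ and $\nnorm{x_n}=2$ for every $n\geq 2$. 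Since $x_1+x_n\to x_1$ weakly, the characterization of super $\Delta$-points by weakly convergent nets recalled in the proof of Theorem~\ref{thm:l2_renorm_e1_delta_not_daugavet} shows that $x_1$ is a super $\Delta$-point in $(X,\nnorm{\cdot})$.

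For~\ref{item:renorm2}, pick a closed subspace $Y\subset X$ isomorphic to $\ell_1$; by Theorem~\ref{thm:veeorg_space}~\ref{item:thm:veeorg_l1}, $Y$ is also isomorphic to $\lipfree{\mathcal M}$. Pulling back the Lipschitz-free norm through this isomorphism gives an equivalent norm $|\cdot|_Y$ on $Y$ under which the image $y_0$ of the molecule $m_{pq}$ is a Daugavet point, hence in particular a $\Delta$-point. I would then invoke the classical fact that any equivalent norm on a closed subspace of a Banach space extends to an equivalent norm on the ambient space with the same restriction to the subspace; one concrete implementation is to define $|x|_X$ as the supremum of $|x^*(x)|$ over all $x^*\in X^*$ whose restriction to $Y$ lies in the unit ball of $(Y,|\cdot|_Y)^*$ and whose $X^*$-norm is bounded by a constant coming from the norm equivalence on $Y$. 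Since the extended norm restricts exactly to $|\cdot|_Y$ on $Y$, the point $y_0$ remains a $\Delta$-point in $(Y,|\cdot|_Y)$, and by the subspace stability of $\Delta$-points recalled in Section~\ref{sec:introduction} it is also a $\Delta$-point in the renormed $X$.

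The main technical point in~\ref{item:renorm1} is the verification of the identities $\nnorm{x_1}=\nnorm{x_1+x_n}=1$: one needs $|x_1|\leq 1$ and $|x_1+x_n|\leq 1$, both of which follow directly from the definition of $B$, combined with the observation that $\tilde x_1^*(x_1+x_n)-2\tilde x_k^*(x_1+x_n)$ equals $1$ when $k\neq n$ and $-1$ when $k=n$, so that the sup-term equals $1$ at both $x_1$ and $x_1+x_n$, while the evaluation at $x_n$ yields $2$. Part~\ref{item:renorm2} is essentially routine once the right extension theorem is identified, the key feature being that the restriction of the new norm to $Y$ is exactly $|\cdot|_Y$ rather than merely equivalent to it.
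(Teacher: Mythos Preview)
Your proposal is correct and follows essentially the same approach as the paper: the same norm-extension lemma (cited from \cite{DGZ}) for~\ref{item:renorm2}, and the same renorming idea via the functionals $e_1^*-2e_n^*$ together with the weak convergence $e_1+e_n\to e_1$ for~\ref{item:renorm1}. The only difference is that the paper takes the \emph{whole} weakly null basic sequence $(e_n)_{n\geq 1}$ (so no need to prepend an extra $x_1$) and skips your ball-enlargement step, defining directly $\nnorm{x}=\max\{\tfrac12\|x\|,\sup_{n\geq 2}|f_n(x)|\}$; your two-step variant mirroring Section~\ref{sec:superr-banach-space} is acknowledged as an alternative in Remark~\ref{rem:renorming_rmks}\ref{item:renorm_rem2}.
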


As we will show in Section~\ref{sec:DualityForDelta}
(see Corollary~\ref{cor:renorming_Delta+weak*dualDelta}),
the existence of a $\Delta$-point in a Banach space automatically
implies the existence of a weak$^*$ super $\Delta$-point in its dual,
so we will essentially focus here on the construction of
$\Delta$-points in our target spaces.

Our first step is showing that we can use similar ideas to the ones 
from Section~\ref{sec:superr-banach-space} to renorm
any infinite dimensional Banach space that fails the Schur property 
with a super $\Delta$-point.

\begin{proof}[Proof of
  Theorem~\ref{thm:all_inf_dim_renorm_delta}~\ref{item:renorm1}.]
  That spaces which do not contain a copy of $\ell_1$ fail the Schur property
  follows from Rosenthal's $\ell_1$~theorem. Now if $X$ fails the Schur
  property, then using classic results on extraction of basic sequences
  (see e.g. \cite[Proposition~1.5.4]{AlbiacKalton}),
  we can construct a weakly null basic sequence $(e_n)_{n\geq 1}\subset S_X$.
  Let $(e_n^*)_{n\geq 1}$ be the sequence of biorthogonal functionals
  on the space $\csp{}(e_n)_{n \geq1}$, and for every $n\geq 2$, take
  a Hahn--Banach extension $f_n\in X^*$ of the functional $e_1^*-2e_n^*$.
  As $(e_n)_{n\geq 1}$ is basic, there exists a constant $K\geq 1$ such 
  that $\sup_{n\geq 2}\norm{f_n}\leq K$.
  We define an equivalent norm $\nnorm{\cdot}$ on $X$ by
  \begin{equation*}
    \nnorm{x}:=\max\left\{\frac{1}{2}\norm{x},\ \sup_{n\geq 2}\abs{f_n(x)}\right\}
  \end{equation*}
  for every $x\in X$.
  Then $\frac{1}{2}\norm{\cdot}\leq \nnorm{\cdot}\leq K\norm{\cdot}$,
  and by construction, we have, for every $n\geq 2$,
  \begin{equation*}
    \nnorm{e_1}=\nnorm{e_1+e_n}=1\ \text{and}\ \nnorm{e_n}=2.
  \end{equation*}
  As $e_1+e_n\to e_1$ weakly, it clearly follows that $e_1$
  is a super $\Delta$-point in $(X,\nnorm{\cdot})$.
\end{proof}

Before proving
Theorem~\ref{thm:all_inf_dim_renorm_delta}~\ref{item:renorm2}
let us state the following lemma that is an easy consequence
of a classical norm extension result.

\begin{lem}\label{lem:renorm_subsp_with_delta}
  Let $X$ be a Banach space.
  If there exists a subspace $Y$ of $X$ that can
  be renormed to admit a $\Delta$-point,
  then $X$ can be renormed to admit a $\Delta$-point.

  The same holds for super $\Delta$-points.
\end{lem}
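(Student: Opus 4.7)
The plan is to extend the good renorming of $Y$ to an equivalent norm on $X$ whose restriction to $Y$ is exactly the given one, and then invoke the observation just after Definition~\ref{defn:slice_diametral_points}, which ensures that being a $\Delta$-point or super $\Delta$-point is inherited from a subspace to any containing space.

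Concretely, starting from an equivalent norm $|\cdot|_Y$ on $Y$ with constants $\alpha\norm{y}\le |y|_Y\le \beta\norm{y}$ under which some $y_0\in S_{(Y,|\cdot|_Y)}$ is a $\Delta$-point, I would extend $|\cdot|_Y$ to all of $X$ by the inf-convolution formula
\begin{equation*}
  |x| := \inf\set{|y|_Y + \beta\norm{x-y} : y \in Y}, \qquad x \in X,
\end{equation*}
a classical device for extending a seminorm from a subspace with controlled Lipschitz constant. One then has to check, via standard triangle-inequality manipulations, that $|\cdot|$ defines an equivalent norm on $X$ (with $|x|\le\beta\norm{x}$ from the choice $y=0$, and $|x|\ge\min(\alpha,\beta)\norm{x}$ from the estimate $|y|_Y+\beta\norm{x-y}\ge\alpha\norm{y}+\beta\norm{x-y}$) and, crucially, that $|y|=|y|_Y$ for every $y\in Y$; the inequality $\le$ is immediate from $y'=y$ in the infimum, while the reverse uses $|y|_Y\le|y'|_Y+|y-y'|_Y\le |y'|_Y+\beta\norm{y-y'}$ for every $y'\in Y$. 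The choice of the constant $\beta$ is exactly what makes this last step go through.

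With this extension in hand, $(Y,|\cdot|_Y)$ is isometrically a subspace of $(X,|\cdot|)$, so the subspace-stability observation following Definition~\ref{defn:slice_diametral_points} immediately turns $y_0$ into a $\Delta$-point of $(X,|\cdot|)$. The super $\Delta$-point case follows in the same way, since the relative weak topology on $Y$ inherited from $X$ coincides with its own weak topology, so every relatively weakly open neighborhood of $y_0$ in $B_{(X,|\cdot|)}$ restricts to a relatively weakly open neighborhood of $y_0$ in $B_{(Y,|\cdot|_Y)}$.

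The only point requiring real care is verifying that the inf-convolution formula genuinely preserves the norm on $Y$; everything else is a direct application of the subspace observation, and I do not anticipate any substantive obstacle beyond this routine verification.
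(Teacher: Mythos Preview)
Your proposal is correct and follows essentially the same approach as the paper: extend the given equivalent norm on $Y$ to an equivalent norm on $X$ that agrees with it on $Y$, and then use the fact (noted after Definition~\ref{defn:slice_diametral_points}) that $\Delta$-points and super $\Delta$-points pass to superspaces. The only difference is that the paper cites \cite[Lemma~II.8.1]{DGZ} for the norm extension, whereas you write out an explicit inf-convolution construction (which is precisely how that lemma is proved); your verification that $|y|=|y|_Y$ on $Y$ is the key step and is correct as stated.
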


\begin{proof}
  Assume that there is a norm $\abs{\cdot}$ on $Y$
  that admits a $\Delta$- or a super $\Delta$-point.
  By \cite[Lemma II.8.1]{DGZ} we can extend
  $\abs{\cdot}$ to an equivalent norm $\nnorm{\cdot}$ on $X$
  which coincides with $\abs{\cdot}$ on $Y$.
  In other words, $(Y,\abs{\cdot})$ is
  isometrically isomorphic to a subspace of $(X,\nnorm{\cdot})$,
  and since both $\Delta$- and super $\Delta$-points pass to superspaces,
  the conclusion follows.
\end{proof}

\begin{proof}[Proof of
  Theorem~\ref{thm:all_inf_dim_renorm_delta}~\ref{item:renorm2}.]
  If $X$ contains a subspace $Y$ which is isomorphic to $\ell_1$,
  then $Y$ can be renormed to admit a $\Delta$-point
  by Theorem~\ref{thm:veeorg_space}.
  We finish by using Lemma~\ref{lem:renorm_subsp_with_delta}.
\end{proof}

Let us end the present section with a few remarks.
  
\begin{rem}\label{rem:renorming_rmks}
  {\ }
  \begin{enumerate}[label=(\alph*)]
  \item\label{item:renorm_rem1}
    Note that it is essential in the proof of 
    Theorem~\ref{thm:all_inf_dim_renorm_delta}~\ref{item:renorm1}
    to go first trough this process of extraction of a basic sequence in
    order to have complete control over the values of $f_n(e_m)$ for 
    distinct $m,n$. It is thus unclear whether a
    similar construction could be done on a weakly null normalized net,
    and in particular whether it could be implemented in $\ell_1$. 
    Also, it is still unknown whether the Daugavet point
    in the space $\lipfree{\mathcal{M}}$ from \cite[Example~3.1]{VeeorgStudia}
    studied in Section~\ref{sec:separable-dual-space} is a super
    $\Delta$-point (see Question~7.1 in \cite{MPRZ} for further discussions). So we do not know 
    whether $\ell_1$ can be renormed with a super $\Delta$-point. 
  \item\label{item:renorm_rem2}
    If $X$ is a Banach space with a normalized weakly null Schauder
    basis $(e_n)_{n\geq 1}$, then the construction from
    Section~\ref{sec:superr-banach-space} can be implemented in a
    natural way on this sequence in order to provide a
    renorming of $X$ for which $e_1$ is a super $\Delta$-point and
    $e_1^*$ is a weak$^*$ super $\Delta$-point.
    So using Lemma~\ref{lem:renorm_subsp_with_delta},
    we get an alternative geometric proof for
    Theorem~\ref{thm:all_inf_dim_renorm_delta}~\ref{item:renorm1}.
  \item\label{item:renorm_rem3}
    Let $X$ be a Banach space with a normalized weakly null Schauder
    basis $(e_n)_{n\geq 1}$. Up to renorming, we may assume that 
    $(e_n)_{n\geq 1}$ is bimonotone. Then it is straightforward to check
    that for either the renorming from 
    Theorem~\ref{thm:all_inf_dim_renorm_delta}~\ref{item:renorm1} or 
    for the renorming copied from Section~\ref{sec:superr-banach-space} that is
    discussed in item \ref{item:renorm_rem2} above, the point $e_1$
    is also an extreme point of the new ball, hence a ccw $\Delta$-point
    in the sense of \cite{MPRZ} (see Proposition~\ref{prop:e1_e1star_ext} and
    Corollary~\ref{cor:e1_e1s_ccw_delta}). 
    The same goes for $e_1^*$ if the
    basis is moreover assumed to be shrinking, and in this case 
    $e_1^*$ becomes a weak$^*$ ccw $\Delta$-point.
    However, it is unclear whether those points pass
    to superspaces in general, and thus we do not know whether
    Theorem~\ref{thm:all_inf_dim_renorm_delta}~\ref{item:renorm1}
    admits an analogue for ccw $\Delta$-points.
    
  \end{enumerate}
 \end{rem}

\section{Duality for \texorpdfstring{$\Delta$}{Delta}-points and
  applications}
\label{sec:DualityForDelta}

In this section, we provide a new powerful duality result for
$\Delta$-points, and collect a few striking consequences for the
geometry of Banach spaces.
The applications range from asymptotic geometry and unconditional bases
to Hahn--Banach smooth spaces.
The main theorem in this section is
Theorem~\ref{thm:D-point/weak*-Delta-points_implies_weak*-super-Delta-point}
where we prove that if a Banach space $X$ contains
a $\mathfrak{D}$-point, or if its dual $X^*$ contains
a weak$^*$ $\Delta$-point, then in both cases $X^*$ actually contains
a weak$^*$ super $\Delta$-point.
As a consequence, we show that $\mathfrak{D}$-points and
weak$^*$ $\Delta$-points are incompatible with some geometric
properties of Banach spaces, such as asymptotic smoothness, shrinking
or monotone boundedly complete $k$-unconditional bases with $k<2$, or
Hahn--Banach smooth spaces that have a dual space with the
Kadets--Klee property.

\subsection{Duality for \texorpdfstring{$\Delta$}{Delta}-points and
  asymptotic geometry}
\label{subsec:Delta-duality_and_asymptotic-geometry}

It was proved in \cite{ALMP} that $\Delta$-points are incompatible
with some asymptotic properties of smoothness and convexity of norms.
The following can be obtained by combining results
from \cite{ALMP} and \cite{VeeorgFunc}.

\begin{thm}\label{thm:asymptotic-properties_Delta-points}
  Let $X$ be a Banach space.
  If $X$ is asymptotically uniformly smooth, then $X$ contains no
  $\Delta$-point, and $X^*$ contains no weak$^*$ $\Delta$-point.
\end{thm}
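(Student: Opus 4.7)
The plan is to prove the two assertions separately, each by combining existing results from the literature, rather than invoking the stronger duality Theorem~\ref{thm:D-point/weak*-Delta-points_implies_weak*-super-Delta-point}. The first assertion, that an AUS Banach space contains no $\Delta$-point, is precisely the content of \cite[Theorem~3.7]{ALMP} and can be invoked directly with no further argument.

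For the second assertion, I would proceed as follows. It is classical that AUS of $X$ is equivalent to a quantitative weak$^\ast$ asymptotic uniform convexity of $X^\ast$, and in particular implies the following: for every $\eta>0$ there exists $\delta>0$ such that $\alpha(S(x,\delta))\leq\eta$ for every $x\in S_X$, where $\alpha$ denotes the Kuratowski measure of non-compactness and $S(x,\delta) = \{y^\ast\in B_{X^\ast} : y^\ast(x)>1-\delta\}$ is the corresponding weak$^\ast$ slice of $B_{X^\ast}$. Assuming for contradiction that some $x^\ast\in S_{X^\ast}$ is a weak$^\ast$ $\Delta$-point, one chooses $\eta<2$, picks $\delta$ as above, and finds $x\in S_X$ with $x^\ast(x)>1-\delta$ (possible by definition of the dual norm), so that $x^\ast$ belongs to the weak$^\ast$ slice $S(x,\delta)$ of small Kuratowski measure. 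The convex combination argument from the proof of \cite[Corollary~2.2]{VeeorgFunc}, applied to this weak$^\ast$ slice, then yields a convex combination of weak$^\ast$ slices containing $x^\ast$ whose supremum of distances to $x^\ast$ is strictly smaller than $2$, contradicting the weak$^\ast$ $\Delta$-point property of $x^\ast$.

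The main obstacle lies in verifying that the argument of \cite[Corollary~2.2]{VeeorgFunc} transfers verbatim to the weak$^\ast$ setting; this is routine because weak$^\ast$ slices of $B_{X^\ast}$ are themselves norm slices of $B_{X^\ast}$ defined by evaluation at predual elements, and the manipulations involved are purely convex-analytic. The other input, namely the duality between the AUS modulus of $X$ and the Kuratowski measure of weak$^\ast$ slices of $B_{X^\ast}$, is well known and essentially due to Johnson--Lindenstrauss--Preiss--Schechtman.
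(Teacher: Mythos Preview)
Your proposal is correct and follows essentially the same route as the paper. The paper likewise treats the theorem as a direct consequence of results already in the literature: it invokes \cite[Theorem~3.5]{ALMP} (equivalently \cite[Theorem~3.7]{ALMP}) for the first assertion, and for the second it uses the duality between AUS and weak$^*$ AUC together with the fact that weak$^*$ $\Delta$-points force every containing weak$^*$ slice to have Kuratowski measure $2$. One minor simplification: rather than adapting \cite[Corollary~2.2]{VeeorgFunc} to the weak$^*$ setting yourself, you can simply cite \cite[Corollary~2.4]{VeeorgFunc}, which is already the weak$^*$ version and removes the need for your final paragraph of justification.
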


The ideas behind this result were related to the duality between
asymptotic smoothness and weak$^*$ asymptotic convexity of the dual on
one side, and to considerations on the
Kuratowski measure of non-compactness $\alpha $ of weak$^*$
slices on the other.
More precisely the two following facts were obtained:
\begin{enumerate}
\item
  if a point $x\in S_X$ is a $\Delta$-point, then
  $\alpha\left( S(x, \delta) \right) = 2$
  for every $\delta > 0$
  \cite[Theorem~3.5]{ALMP} or \cite[Corollary~2.2]{VeeorgFunc}.
  In particular, no \emph{asymptotically smooth point} can be
  a $\Delta$-point \cite[Proposition~3.6]{ALMP}.
  That is, $x$ is not a $\Delta$-point
  if $\lim_{t \to 0} \bar{\rho}(t,x)/t = 0$, where
  $\bar{\rho}(t,x)$ is the modulus of asymptotic smoothness at $x$
  (see e.g. \cite[Definition~14.6.1]{AlbiacKalton}).
\item
  if a point $x^*\in S_{X^*}$ is a weak$^*$ $\Delta$-point,
  then every weak$^*$ slice $S$ of $B_{X^*}$ containing $x^*$ has
  Kuratowski measure $\alpha(S) = 2$
  \cite[Corollary~2.4]{VeeorgFunc}.
  In particular, no \emph{weak$^*$ quasi denting-point} can be
  a weak$^*$ $\Delta$-point.
\end{enumerate}

As every point in the unit sphere of a weak$^*$ asymptotically uniformly
convex dual space is weak$^*$ quasi denting
(see e.g. the discussion following Corollary~4.7 in \cite{ALMP}),
Theorem~\ref{thm:asymptotic-properties_Delta-points} immediately
follows.

The relation between asymptotic properties and $\mathfrak{D}$-points was
left aside in those papers, but let us point out that from the proof of \cite[Theorem~4.2]{ALMP}
we can collect the following lemma. Recall that a unit sphere element
$x$ in a Banach space $X$ is an $\alpha$-strongly exposed point if
there exists $x^* \in D(x)$ such that
$\lim_{\delta \to 0} \alpha(S(x^*, \delta)) = 0$.

\begin{lem}\label{lem:alpha_str_exp_not_Dpt}
  Let $X$ be a Banach space and let $x \in S_X$
  be an $\alpha$-strongly exposed point.
  Then $x$ is not a $\mathfrak{D}$-point.
\end{lem}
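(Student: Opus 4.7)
My plan is to prove the contrapositive: assuming $x\in S_X$ is $\alpha$-strongly exposed by some $x^*\in D(x)$, I will exhibit $y^*\in D(x)$ and $\delta>0$ with $\sup_{z\in S(y^*,\delta)}\norm{x-z}<2$, thereby denying $x$ the $\mathfrak{D}$-point property.

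The starting observation is that the face $F:=\set{z\in B_X:x^*(z)=1}$ is compact: the inclusion $F\subseteq S(x^*,\delta)$ for every $\delta>0$ forces $\alpha(F)=0$, and $F$ is closed. A routine use of $\alpha$-strong exposition then yields a Hausdorff-type convergence, namely that any sequence $z_n\in S(x^*,\delta_n)$ with $\delta_n\to 0$ has a convergent subsequence with limit in $F$; consequently $\lim_{\delta\to 0}\sup_{z\in S(x^*,\delta)}\norm{x-z}=\sup_{z\in F}\norm{x-z}$. If this limiting supremum is strictly less than $2$, then $x^*$ itself is the witness and we are done.

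Otherwise, by compactness of $F$ the supremum is attained on a nonempty compact set $E:=\set{z\in F:\norm{x-z}=2}$. For each $z\in E$, Hahn--Banach supplies $\psi_z^*\in S_{X^*}$ with $\psi_z^*(x-z)=2$, which forces $\psi_z^*\in D(x)$ and $\psi_z^*(z)=-1$; by continuity of $\psi_z^*$ there is an open ball $B(z,\varepsilon_z)$ on which $\psi_z^*$ stays strictly negative. Compactness of $E$ gives a finite subcover indexed by $z_1,\ldots,z_m\in E$, and I would take
\begin{equation*}
  y^*:=\frac{1}{m+1}\bigl(x^*+\psi_{z_1}^*+\cdots+\psi_{z_m}^*\bigr)\in D(x).
\end{equation*}
The inclusion $S(y^*,\delta)\subseteq S(x^*,(m+1)\delta)$, which follows from the fact that each summand of the defining convex combination is bounded by $1$ on $B_X$, ensures that $y^*$ still $\alpha$-strongly exposes $x$, so the Hausdorff-type argument applies with $y^*$ in place of $x^*$. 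It then suffices to check that no $w\in B_X$ with $y^*(w)=1$ satisfies $\norm{x-w}=2$: extremality in the average would force $x^*(w)=\psi_{z_i}^*(w)=1$ for every $i$, hence $w\in E$, so that $w\in B(z_j,\varepsilon_j)$ for some $j$; but there $\psi_{z_j}^*(w)<0$, a contradiction.

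The crux of the argument is the averaging construction: producing a single functional in $D(x)$ that both $\alpha$-strongly exposes $x$ \emph{and} separates $x$ from every point of $E$ at once. The finite averaging trick, made possible by the compactness of $E$, is precisely what allows these two requirements to coexist.
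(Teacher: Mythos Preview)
Your argument is correct. The paper does not actually supply a proof of this lemma; it simply states that the result can be ``collected'' from the proof of \cite[Theorem~4.2]{ALMP}. Your reconstruction---compactness of the face $F$, the Hausdorff-type convergence of slice diameters to $\sup_{z\in F}\norm{x-z}$, and the finite averaging of separating functionals over the compact set $E$ to produce a single $y^*\in D(x)$ that both $\alpha$-strongly exposes $x$ and has no point at distance $2$ from $x$ on its face---is precisely the standard argument one expects here, and every step checks out. In particular, the slice inclusion $S(y^*,\delta)\subseteq S(x^*,(m+1)\delta)$ is correct and guarantees that $y^*$ inherits the $\alpha$-strong exposing property, while the extremality argument forcing $\psi_{z_j}^*(w)=1$ for a point where $\psi_{z_j}^*$ is strictly negative closes the loop cleanly.
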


Recall that a Banach space $X$ has \emph{Rolewicz' property $(\alpha)$}
if for every $x^* \in S_{X^*}$ and $\varepsilon > 0$
there exists $\delta > 0$ such that
$\alpha(S(x^*,\delta)) \le \varepsilon$.
We say that $X$ has \emph{uniform property $(\alpha)$}
if the same $\delta$ works for all $x^* \in S_{X^*}$.
These properties were introduced by Rolewicz in \cite{MR928575}.
Implicit in Rolewicz \cite[Theorem~3]{MR928575}
is the result that $X$ is asymptotically uniformly convex and
reflexive if and only if $X$ has uniform property $(\alpha)$;
Rolewicz uses the term ``$X$ is $\Delta$-uniformly
convex'' instead of $X$ is asymptotically uniformly convex and
reflexive.

As corollaries of Lemma~\ref{lem:alpha_str_exp_not_Dpt}, we get the two following results.
The first corollary is a strengthening of
\cite[Theorem~4.4]{ALMP}.

\begin{cor}\label{cor:property-alpha_D-points}
  Let $X$ be a Banach space.
  If $X$ has Rolewicz' property $(\alpha)$, and in particular if $X$
  has finite dimension or if $X$ is reflexive and asymptotically uniformly
  convex, then $X$ contains no $\mathfrak{D}$-point.
\end{cor}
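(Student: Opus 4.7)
The plan is to deduce the corollary directly from Lemma~\ref{lem:alpha_str_exp_not_Dpt}, by showing that Rolewicz' property $(\alpha)$ forces every element of $S_X$ to be $\alpha$-strongly exposed, so that no point of $S_X$ can be a $\mathfrak{D}$-point.

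First, fix $x \in S_X$ arbitrarily. By the Hahn--Banach theorem, $D(x)$ is nonempty, so pick any $x^* \in D(x)$. Property $(\alpha)$ applied to this particular $x^*$ provides, for every $\varepsilon > 0$, some $\delta > 0$ with $\alpha(S(x^*,\delta)) \leq \varepsilon$; this is precisely the statement that $\lim_{\delta \to 0} \alpha(S(x^*,\delta)) = 0$. Hence $x$ is $\alpha$-strongly exposed (by the very same $x^*$). Lemma~\ref{lem:alpha_str_exp_not_Dpt} now rules out the possibility that $x$ is a $\mathfrak{D}$-point, and since $x$ was arbitrary, this settles the main claim.

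It remains to verify the two sufficient conditions listed in the statement. If $\dim X < \infty$, then every bounded subset of $X$ is relatively compact, so $\alpha$ vanishes identically on the family of bounded sets; in particular $\alpha(S(x^*,\delta)) = 0$ for all $x^* \in S_{X^*}$ and all $\delta > 0$, and property $(\alpha)$ holds trivially. If $X$ is reflexive and asymptotically uniformly convex, then by Rolewicz' theorem \cite[Theorem~3]{MR928575} (as recalled immediately before the corollary), $X$ has \emph{uniform} property $(\alpha)$, which is in particular property $(\alpha)$.

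I do not expect any serious obstacle here: the proof is essentially a two-line combination of Lemma~\ref{lem:alpha_str_exp_not_Dpt} with the definitions, with Hahn--Banach as the only non-trivial ingredient. The mild subtlety is that the notion of $\alpha$-strongly exposed point requires a \emph{specific} supporting functional $x^* \in D(x)$ witnessing that the $\alpha$-diameters of slices shrink to zero, while the definition of $\mathfrak{D}$-point must be tested against \emph{every} $x^* \in D(x)$; fortunately we are free to use the very same $x^*$ at both ends of the argument, so no extra care is needed.
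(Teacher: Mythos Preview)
Your proof is correct and follows exactly the same approach as the paper: the paper's proof is the single sentence ``This immediately follows from Lemma~\ref{lem:alpha_str_exp_not_Dpt} since from Rolewicz' property $(\alpha)$, every $x \in S_X$ is $\alpha$-strongly exposed,'' and you have simply unpacked this by making the Hahn--Banach step and the two special cases explicit.
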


\begin{proof}
  This immediately follows from  Lemma~\ref{lem:alpha_str_exp_not_Dpt}
  since from Rolewicz' property $(\alpha)$,
  every $x \in S_X$ is $\alpha$-strongly exposed.
\end{proof}

\begin{cor}\label{cor:dual_aucst_no_delta}
  Let $X$ be a Banach space such that
  $X^*$ is weak$^*$ asymptotically uniformly convex.
  Then $X^*$ contains no weak$^*$ $\Delta$-points and
  no $x^* \in S_{X^*}$ that attains its norm on $X$
  is a $\mathfrak{D}$-point.
\end{cor}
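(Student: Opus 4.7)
The plan is to prove the two assertions separately, leveraging the tools developed earlier in this subsection.

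For the first assertion, the fact that $X^*$ contains no weak$^*$ $\Delta$-point is essentially a repackaging of the discussion preceding the statement: weak$^*$ asymptotic uniform convexity of $X^*$ ensures that every $x^* \in S_{X^*}$ is weak$^*$ quasi denting (cf.\ the comment following \cite[Corollary~4.7]{ALMP}), while \cite[Corollary~2.4]{VeeorgFunc} precludes any weak$^*$ quasi denting point from being a weak$^*$ $\Delta$-point. Combining the two observations closes the first half.

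For the second assertion, I would fix $x^* \in S_{X^*}$ attaining its norm at some $x \in S_X$ and apply Lemma~\ref{lem:alpha_str_exp_not_Dpt} with $X^*$ in the role of the ambient Banach space. Via the canonical embedding, $x$ belongs to $S_{X^{**}} \cap D(x^*)$, so it suffices to establish that $x^*$ is $\alpha$-strongly exposed by $x$, that is, $\alpha(S(x,\delta)) \to 0$ as $\delta \to 0^+$, where $S(x,\delta) = \{y^* \in B_{X^*} : y^*(x) > 1-\delta\}$ is a weak$^*$ slice of $B_{X^*}$. Once this is in hand, Lemma~\ref{lem:alpha_str_exp_not_Dpt} immediately gives that $x^*$ is not a $\mathfrak{D}$-point.

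The main obstacle is pinning down the $\alpha$-strong exposition with respect to the \emph{specific} norming functional $x$, rather than with respect to some unrelated weak$^*$ slice (which is all that weak$^*$ quasi denting of $x^*$ supplies on its own). My plan is to exploit the pointwise modulus of weak$^*$ asymptotic uniform convexity at $x^*$: given $\varepsilon > 0$, its positivity yields a weak$^*$ closed finite-codimensional subspace $E \subseteq X^*$ and a constant $c > 0$ such that $\|x^* + y^*\| \geq 1 + c$ whenever $y^* \in E$ satisfies $\|y^*\| \geq \varepsilon$. For $\delta \in (0,c)$ and any $z^* \in S(x,\delta)$, decomposing $z^* - x^*$ via the quotient $X^*/E$ forces the $E$-component to have norm at most $\varepsilon$, while the finite-dimensional piece in $X^*/E$ can be covered by finitely many $\varepsilon$-balls by compactness. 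This yields $\alpha(S(x,\delta)) \lesssim \varepsilon$ and is the standard pointwise counterpart of the duality between asymptotic uniform smoothness of $X$ at $x$ and weak$^*$ asymptotic uniform convexity of $X^*$ at $x^*$.
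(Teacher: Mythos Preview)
Your overall strategy matches the paper's proof exactly: the first assertion is handled by the quasi-denting argument (which is precisely the content of Theorem~\ref{thm:asymptotic-properties_Delta-points}), and for the second assertion you correctly identify that it suffices to show that $x^*$ is $\alpha$-strongly exposed by $x$ and then invoke Lemma~\ref{lem:alpha_str_exp_not_Dpt}. The paper does the same, but simply cites \cite[Corollary~3.4]{ALMP} for the $\alpha$-strong exposition step rather than arguing directly.

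Your attempt to prove that step directly from the pointwise weak$^*$ AUC modulus at $x^*$ has a gap. From $z^*\in S(x,\delta)$ you only know $\|z^*\|\le 1$ and $z^*(x)>1-\delta$; if you write $z^*-x^*=e^*+f^*$ with $e^*\in E$ and $f^*$ in a complement, the modulus inequality $\|x^*+e^*\|\ge 1+c$ (valid when $\|e^*\|\ge\varepsilon$) gives no contradiction, because $x^*+e^*=z^*-f^*$ and you have no a~priori bound on $\|f^*\|$. In other words, the constraint $\|z^*\|\le 1$ does not transfer to a bound on $\|x^*+e^*\|$ through an arbitrary projection onto $E$. Moreover, working only with the modulus \emph{at $x^*$} cannot suffice: the slice $S(x,\delta)$ need not be concentrated near $x^*$, and bounding its Kuratowski measure requires comparing arbitrary pairs $z^*,w^*$ in the slice.

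The clean route is the one the paper takes: use that weak$^*$ asymptotic uniform convexity of $X^*$ is dual to asymptotic uniform smoothness of $X$, so in particular $x$ is an asymptotically smooth point, and then \cite[Corollary~3.4]{ALMP} gives $\alpha(S(x,\delta))\to 0$ directly. With that citation in place, your proof is complete and coincides with the paper's.
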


\begin{proof}
  That $X^*$ has no weak$^*$ $\Delta$-points
  is part of Theorem~\ref{thm:asymptotic-properties_Delta-points}.
  Let $x^* \in S_{X^*}$ be such that there exists
  $x \in S_X$ with $x^*(x) = 1$.
  Then $x^*$ is (weak$^*$) $\alpha$-strongly exposed
  by \cite[Corollary~3.4]{ALMP}, hence not a $\mathfrak{D}$-point
  by Lemma~\ref{lem:alpha_str_exp_not_Dpt}.
\end{proof}

\begin{rem}\label{rem:dual_of_c_0_AUC*_D-points}
  Note that this result is sharp, because as the dual of $c_0$,
  we have that $\ell_1$ is weak$^*$ asymptotically uniformly convex
  and that every $x \in S_{\ell_1}$ with infinite
  support is a $\mathfrak{D}$-point
  (see Proposition~2.3 in \cite{AHLP}).
\end{rem}

It is unclear whether either \cite[Theorem~3.5]{ALMP} or
\cite[Corollary~2.2]{VeeorgFunc} admit analogues for
$\mathfrak{D}$-points.  Yet we can provide
stronger duality results for those points that will
in particular give new information about asymptotically
smooth points and asymptotically  uniformly smooth spaces.

\begin{thm}\label{thm:D-point/weak*-Delta-points_implies_weak*-super-Delta-point}
  Let $X$ be a Banach space.
  If $X$ contains a $\mathfrak{D}$-point,
  or if $X^*$ contains a weak$^*$ $\Delta$-point,
  then $X^*$ contains a weak$^*$ super $\Delta$-point.
\end{thm}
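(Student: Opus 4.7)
The plan is to handle both hypotheses through a common construction, rather than via a formal reduction of Case 1 to Case 2 (which is not entirely straightforward). In each case I would exhibit a functional $x^* \in S_{X^*}$ and show that every basic weak$^*$ neighborhood $W = \bigcap_{i=1}^n\{y^* \in B_{X^*} : |y^*(u_i) - x^*(u_i)| < \delta\}$ of $x^*$ contains points at norm-distance arbitrarily close to $2$ from $x^*$, which by \cite[Proposition~3.4]{MPRZ} is exactly the required property.

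In Case 1, let $x$ be the $\mathfrak{D}$-point and fix $x^* \in D(x)$. The $\mathfrak{D}$-point condition produces, for each $\eta > 0$, a point $y \in B_X$ with $x^*(y) > 1 - \eta$ and $\|x - y\| > 2 - \eta$; any norming functional $z^* \in S_{X^*}$ for $x - y$ then lies in the weak$^*$ slice $S(x, \eta) \subseteq B_{X^*}$ and satisfies $\|x^* - z^*\| > 2 - 2\eta$, since $(x^* - z^*)(y) > 2 - 2\eta$. In Case 2, the weak$^*$ $\Delta$-hypothesis yields such witnesses $z^*$ in every weak$^*$ slice through $x^*$. In both cases, we thus obtain a supply of $z^*$'s witnessing the norm-distance $2$ inside a specific family of weak$^*$ slices through $x^*$.

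The heart of the argument is to show that such witnesses can in fact be placed inside any prescribed neighborhood $W$. I would proceed by contradiction: if $\|x^* - y^*\| \le 2 - \varepsilon$ for all $y^* \in W$, then setting $A = \{y^* \in B_{X^*} : \|x^* - y^*\| > 2 - \varepsilon\}$ gives $x^* \notin \overline{A}^{w^*}$. Since $F := \mathrm{span}(x, u_1, \ldots, u_n)$ is finite-dimensional, the restriction map $q : X^* \to F^*$ is weak$^*$-to-norm continuous, so the separation transfers to $q(x^*) \notin \overline{q(A)}$ in $F^*$. A Hahn--Banach separation argument in the finite-dimensional space $F^*$ (whose dual is $F \subseteq X$) would then yield a direction $v \in F$ and a weak$^*$ slice $S(v, \eta)$ through $x^*$ disjoint from $A$, contradicting the distance-$2$ property of the witnesses from the previous paragraph.

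Two technical obstacles need to be resolved. First, the set $A$ is not convex, so the naive separation only produces $x^* \in \overline{\mathrm{co}}^{w^*}(A)$ rather than a separating slice. The expected workaround is to decompose $A$ as the union of the convex slabs $A_v = \{y^* \in B_{X^*} : v(x^* - y^*) > 2 - \varepsilon\}$ indexed by $v \in B_{X^{**}}$, and use weak$^*$ compactness of $B_{X^{**}}$ to extract a fixed $v_\infty$ such that the convex set $A_{v_\infty}$ still meets every relevant weak$^*$ slice through $x^*$; Hahn--Banach then applies cleanly to $A_{v_\infty}$. Second, in Case 1 we only control slices of the form $S(x, \cdot)$, so the separating direction $v$ must be engineered to coincide with $x$ --- this is accomplished by adjoining $x$ to the defining set of $W$ and carefully tuning the separation threshold in $F^*$. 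Reconciling this selection with the quotient structure $X^*/F^\perp = F^*$ is the technical core of the proof.
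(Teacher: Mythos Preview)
Your approach diverges from the paper's in a crucial way, and the divergence is where the gap lies. You fix a candidate $x^*$ in advance --- an arbitrary element of $D(x)$ in Case~1, the given weak$^*$ $\Delta$-point in Case~2 --- and try to prove that \emph{this particular} $x^*$ is a weak$^*$ super $\Delta$-point. That is a strictly stronger statement than the theorem, and your sketch does not establish it. The heart of your argument is the claim that from ``$W\cap A=\varnothing$'' one can manufacture a weak$^*$ \emph{slice} through $x^*$ disjoint from $A$ (or from some convex $A_{v_\infty}$). The obstacle you flag is real and your workaround does not close it: to run Hahn--Banach against a convex $A_{v_\infty}$ you would first need a single $v_\infty\in B_{X^{**}}$ for which $A_{v_\infty}$ meets \emph{every} weak$^*$ slice through $x^*$, and nothing in the weak$^*$ compactness of $B_{X^{**}}$ delivers that --- the witnessing directions $v$ depend on the slice, and passing to a cluster point loses the intersection property. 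In Case~1 the situation is worse, since you only control slices of the specific form $S(x,\eta)$; forcing the separating direction to be $x$ is not something ``adjoining $x$ to $W$'' can achieve.

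The paper's proof avoids this entirely by \emph{not} fixing the target functional. In Case~1 it splits on whether $D(x)$ is a singleton (where your idea does work, and is essentially what the paper does) or infinite; in the infinite case it builds a net $(x_A^*)$ indexed by finite subsets of $D(x)$ and shows that any weak$^*$ cluster point $y^*\in D(x)$ is the desired super $\Delta$-point. In Case~2 it recursively constructs sequences $(x_n)\subset S_X$ and $(x_n^*)\subset S_{X^*}$ so that $x_n^*$ lies in the slice determined by the average of $x_0,\ldots,x_{n-1}$ and almost norms $x_0^*-x_n^*$; a weak$^*$ cluster point $y^*$ of $(x_n^*)$ then satisfies $y^*(x_i)$ close to~$1$ in an averaged sense, and a further subnet extraction yields $\norm{y^*-z_\beta^*}\to 2$ with $z_\beta^*\to y^*$ weak$^*$. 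The point is that weak$^*$ compactness is used to \emph{produce} the super $\Delta$-point, not to uniformize a separation argument around a preassigned one.
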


\begin{proof}
  First, let us assume that $x \in S_X$ is a $\mathfrak{D}$-point.
  We will distinguish between two cases.
  First assume that the set $D(x)$ contains exactly one element $x^*$.
  For every $n\in\N$ there exists $x_n \in S(x^*, 1/n)$
  such that $\|x - x_n\| > 2 - 1/n$.
  Let $x_n^* \in S_{X^*}$ be such that
  $x_n^*(x) - x_n^*(x_n) > 2 - 1/n$.
  By weak$^*$ compactness of $B_{X^*}$, there exists a subnet
  $(y_\alpha^*)_{\alpha\in\mathcal{A}}$ of $(x_n^*)$ that is
  weak$^*$-convergent to some element $y^* \in B_{X^*}$.
  Since $x_n^*(x) > 1 - \frac{1}{n}$ for every $n\in \N$,
  we get $x_n^*(x) \rightarrow 1$ and thus also
  $y_\alpha^*(x) \rightarrow 1$. Hence $y^*\in D(x)$,
  i.e. $y^* = x^*$.
  Furthermore,
  \begin{equation*}
    \|x^* - x_n^*\|
    \ge x^*(x_n) - x_n^*(x_n)
    > 2 - \frac{2}{n}
  \end{equation*}
  for every $n\in\N$.
  Therefore $\|x^* - x_n^*\| \rightarrow 2$ and thus
  also $\|x^* - y_\alpha^*\| \rightarrow 2$,
  meaning $x^*$ is a weak$^*$ super $\Delta$-point.

  Now assume that $D(x)$ has at least two distinct elements.
  Since $D(x)$ is convex, then $D(x)$ is infinite.
  Denote by $\mathcal{A}$ the directed set of all finite subsets of
  $D(x)$ ordered by inclusion.

  For every $A\in \mathcal{A}$ we have
  $\frac{1}{|A|} \sum_{x^*\in A} x^*(x) = 1$,
  thus there exists $x_A\in S_X$ such that
  \begin{equation*}
    \frac{1}{|A|} \sum_{x^*\in A} x^*(x_A)
    >
    1 - \frac{1}{|A|^2}
  \end{equation*}
  and $\|x - x_A\| > 2 - \frac{1}{|A|}$.
  Therefore there also exists $x_A^* \in S_{X^*}$
  such that $x_A^*(x - x_A) > 2 - \frac{1}{|A|}$.
  There exists a subnet $(y_B^*)_{B\in\mathcal{B}}$ of
  $(x_A^*)_{A\in\mathcal{A}}$ that is
  weak$^*$-convergent to some element $y^* \in B_{X^*}$.
  Since $x_A^*(x) > 1 - \frac{1}{|A|}$ for every $A\in \mathcal{A}$,
  we get $x_A^*(x)\rightarrow 1$ and thus also $y_B^*(x)\rightarrow
  1$.
  Hence $y^*\in D(x)$.

  Fix $\alpha > 0$.
  Let $A_0 \in \mathcal{A}$ be such that $y^* \in A_0$ and
  $\frac{2}{|A_0|} < \alpha$.
  Then for all $A \succeq A_0$ we have $y^*\in A$ and thus
  \begin{equation*}
    \frac{1}{|A|} y^*(x_A)
    \ge
    \frac{1}{|A|}\sum_{x^*\in A} x^*(x_A) - 1 + \frac{1}{|A|}
    >
    \frac{1}{|A|} - \frac{1}{|A|^2},
  \end{equation*}
  which gives us
  \begin{equation*}
    \|y^* - x_A^*\|
    \ge
    y^*(x_A) - x_A^*(x_A)
    >
    1 - \frac{1}{|A|} + 1 - \frac{1}{|A|}
    \ge
    2 - \frac{2}{|A_0|}
    >
    2 - \alpha.
  \end{equation*}
  Therefore $\|y^*-x_A^*\|\rightarrow 2$ and thus also
  $\|y^*-y_B^*\|\rightarrow 2$, meaning
  $y^*$ is a weak$^*$ super $\Delta$-point.

  Second, let us assume that $x_0^*\in S_{X^*}$ is a weak$^*$ $\Delta$-point.
  Let $x_0\in S_{X}$ be such that $x_0^*(x_0) > 1/2$.
  We construct recursively two sequences $(x_i)_{i\geq 1}$
  and $(x_i^*)_{i\geq 1}$ in $S_X$ and $S_{X^*}$ in the following way.
  First choose $x_1^*\in S(x_0,1/2)$ such that
  $\|x_0^* - x_1^*\| > 2 - 1/4$,
  and choose $x_1\in S_{X}$ such that
  $x_0^*(x_1) > 1 - 1/4$ and $x_1^*(-x_1) > 1 - 1/4$.

  Then assume that we have found $x_1,\ldots,x_{n-1} \in S_{X}$
  and $x_1^*, \ldots, x_{n-1}^* \in S_{X^*}$ such that
  \begin{equation*}
    -x_k^*(x_k) > 1 - \frac{1}{2^{k+1}}
    \quad\text{ and }\quad
    \sum_{i=0}^{k-1} x_k^*(x_i) > k - 1
    \quad\text{ and }\quad
    x_0^*(x_k) > 1 - \frac{1}{2^{k+1}}
  \end{equation*}
  for every $k \in \{1,\ldots,n-1\}$.
  Then
  \begin{equation*}
    x_0^* \Big(\sum_{i=0}^{n-1}x_i\Big)
    >
    \sum_{i=0}^{n-1} \Big(1 - \frac{1}{2^{i+1}}\Big)
    >
    n - 1.
  \end{equation*}
  Since $x_0^*$ is a weak$^*$ $\Delta$-point,
  there exist $x_n^*\in S_{X^*}$ such that
  \begin{equation*}
    x_n^*\Big(\sum_{i=0}^{n-1}x_i\Big) > n - 1
  \end{equation*}
  and $\|x_0^* - x_n^*\| > 2 - 1/2^{n+1}$.
  Choose $x_n\in S_{X}$ such that
  $x_0^*(x_n) > 1-1/2^{n+1}$ and $-x_n^*(x_n) > 1-1/2^{n+1}$.

  So we end up with two sequences $(x_i)_{i\geq 1}$ and
  $(x_i^*)_{i\geq 1}$ in $S_X$ and $S_{X^*}$ such that
  $x_i^*(x_i) \to -1$.
  Furthermore, we have
  \begin{equation*}
    \sum_{i\in I} x_n^*(x_i)
    \ge
    \sum_{i=0}^{n-1} x_n^*(x_i) - (n - |I|)
    >
    n - 1 - (n - |I|)
    =
    |I| - 1
  \end{equation*}
  for every finite set $I\subseteq\{1,\dots,n-1\}$.
  As $B_{X^*}$ is weak$^*$ compact, there exist subnets
  $(y_\alpha)_{\alpha\in\mathcal{A}}$ and
  $(y_\alpha^*)_{\alpha\in\mathcal{A}}$
  of those sequences such that
  $(y_\alpha^*)_{\alpha\in\mathcal{A}}$ is weak$^*$-convergent
  to some element $y^* \in B_{X^*}$.
  Since
  \begin{equation*}
    x_n^*\Big(\frac{1}{|I|} \sum_{i\in I}x_i\Big)
    >
    1 - \frac{1}{|I|}
  \end{equation*}
  whenever $n>\max I$, we get
  \begin{equation*}
    y^*\Big(\frac{1}{|I|} \sum_{i\in I}x_i\Big)
    >
    1 - \frac{1}{|I|}
  \end{equation*}
  for every finite set $I\subseteq\N$.
  We will finally show that there exists a subnet
  $(z_\beta)_{\beta\in\mathcal{B}}$
  such that $y^*(z_\beta)\rightarrow1$.
  Fix $\gamma > 0$ and $\alpha_0 \in \mathcal{A}$.
  Then we can find a finite set $A \subseteq\{\alpha\in \mathcal{A}\colon \alpha\succeq \alpha_0\}$
  such that $1/|A| < \gamma$ and all elements in
  $A$ correspond to different elements in $\N$.
  Then
  \begin{equation*}
    y^*
    \Big(
    \frac{1}{|A|}\sum_{\alpha\in A} y_\alpha
    \Big)
    >
    1 - \frac{1}{|A|}
  \end{equation*}
  and thus there exists $\alpha\in A$
  such that $y^*(y_\alpha) > 1 - 1/|A| > 1 - \gamma$.
  It follows that $1$ is a cluster point of the net
  $(y^*(y_\alpha))_{\alpha\in\mathcal{A}}$,
  and therefore there exist subnets $(z_\beta)_{\beta\in\mathcal{B}}$
  and $(z_\beta^*)_{\beta\in\mathcal{B}}$ such that
  $y^*(z_\beta)\rightarrow1$.
  As we are working with subnets,
  $z_\beta^*(z_\beta) \rightarrow -1$
  by construction of the original sequences, so
  \begin{equation*}
    \|y^* - z_\beta^*\|
    \ge
    y^*(z_\beta) - z_\beta^*(z_\beta)
    \rightarrow 2.
  \end{equation*}
  Therefore $y^*$ is a weak$^*$ super $\Delta$-point.
\end{proof}

\begin{rem}\label{rem:weak*-super-Delta}
  It is known that there are $\Delta$-points that are not
  super $\Delta$, and that there even exists a Banach space with
  a large subset of Daugavet-points that contains no
  super $\Delta$-point
  (combine Proposition~2.12 and Theorem~3.1 or Theorem~4.7 in
  \cite{ALMT}).
  So the previous result is completely specific to the
  weak$^*$ topology, and it is quite clear from the above proof that
  essentially all comes down to the weak$^*$ compactness
  of the dual unit ball.
\end{rem}

Note that as a corollary of this result and
Theorem~\ref{thm:all_inf_dim_renorm_delta},
we get the following.

\begin{cor}\label{cor:renorming_Delta+weak*dualDelta}

    Let $X$ be an infinite dimensional Banach space. Then $X$ can be renormed so that $X$ admits a $\Delta$-point and $X^*$ admits a weak$^*$ super $\Delta$-point. Moreover, if $X$ fails the Schur property, then $X$ can be renormed so that $X$ admits a super $\Delta$-point and $X^*$ admits a weak$^*$ super $\Delta$-point. 
    
\end{cor}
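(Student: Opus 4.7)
The plan is a direct combination of Theorem~\ref{thm:all_inf_dim_renorm_delta} with Theorem~\ref{thm:D-point/weak*-Delta-points_implies_weak*-super-Delta-point}, bridged by the trivial but essential observation that every $\Delta$-point is a $\mathfrak{D}$-point. Indeed, if $x\in S_X$ is a $\Delta$-point and $x^*\in D(x)$, then $x^*(x)=1$ implies $x\in S(x^*,\varepsilon)$ for every $\varepsilon>0$, so the defining property of a $\Delta$-point yields $\sup_{y\in S(x^*,\varepsilon)}\|x-y\|=2$. Likewise, a super $\Delta$-point is in particular a $\Delta$-point since slices containing $x$ are relatively weakly open, and hence a $\mathfrak{D}$-point as well.

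For the first assertion, let $X$ be an arbitrary infinite-dimensional Banach space. Either $X$ contains a copy of $\ell_1$, in which case Theorem~\ref{thm:all_inf_dim_renorm_delta}\ref{item:renorm2} supplies a renorming of $X$ admitting a $\Delta$-point, or $X$ does not contain $\ell_1$, in which case Rosenthal's $\ell_1$-theorem forces $X$ to fail the Schur property (otherwise every bounded sequence would possess a norm-Cauchy subsequence and $X$ would be finite-dimensional), so that Theorem~\ref{thm:all_inf_dim_renorm_delta}\ref{item:renorm1} supplies a renorming admitting a super $\Delta$-point. In either case, the renormed space $X$ carries a $\mathfrak{D}$-point, and applying Theorem~\ref{thm:D-point/weak*-Delta-points_implies_weak*-super-Delta-point} to this renorming produces a weak$^*$ super $\Delta$-point in $X^*$, as required.

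For the second assertion, the hypothesis that $X$ fails the Schur property allows us to invoke Theorem~\ref{thm:all_inf_dim_renorm_delta}\ref{item:renorm1} directly, producing a renorming in which $X$ admits a super $\Delta$-point. By the opening observation, this point is also a $\mathfrak{D}$-point, so Theorem~\ref{thm:D-point/weak*-Delta-points_implies_weak*-super-Delta-point} once more furnishes a weak$^*$ super $\Delta$-point in $X^*$ for the same renorming. There is no genuine obstacle to overcome here: the real work was done in Sections~\ref{sec:separable-dual-space}--\ref{sec:renorm-Banach-spaces} and in Theorem~\ref{thm:D-point/weak*-Delta-points_implies_weak*-super-Delta-point}, and the corollary amounts to stitching those results together via the elementary implication $\Delta\Rightarrow\mathfrak{D}$.
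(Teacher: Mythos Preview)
Your proposal is correct and is exactly the argument the paper intends: the corollary is stated immediately after Theorem~\ref{thm:D-point/weak*-Delta-points_implies_weak*-super-Delta-point} with the single sentence ``as a corollary of this result and Theorem~\ref{thm:all_inf_dim_renorm_delta}, we get the following,'' and you have simply spelled out that combination together with the obvious implication $\Delta\Rightarrow\mathfrak{D}$. The only cosmetic remark is that your parenthetical justification that ``no $\ell_1$ implies non-Schur'' is already recorded in the statement of Theorem~\ref{thm:all_inf_dim_renorm_delta}\ref{item:renorm1}, so you could cite it directly rather than rederive it.
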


Recall that a (dual) Banach space $X$ has the
(weak$^*$) \emph{Kadets property}
if the weak (respectively the weak$^*$)
and norm topology coincide on the unit sphere of $X$.
As pointed out in \cite[Section~3]{MPRZ}, it is clear that
(weak$^*$) super $\Delta$-points are incompatible with
being (weak$^*$) Kadets.
So observe that it immediately follows from
Theorem~\ref{thm:D-point/weak*-Delta-points_implies_weak*-super-Delta-point}
that if a Banach space $X$ contains a $\mathfrak{D}$-point,
or if its dual $X^*$ contains a weak$^*$-$\Delta$-point,
then $X^*$ fails to be weak$^*$ Kadets.
In particular, as weak$^*$ asymptotically uniformly convex duals satisfy a
uniform weak$^*$ Kadets property in the sense of \cite{Lancien95},
we immediately get the following improved version of
Theorem~\ref{thm:asymptotic-properties_Delta-points}.

\begin{cor}\label{cor:sharp_asymptotic-properties_Delta-points}
  Let $X$ be a Banach space.
  If $X$ is asymptotically uniformly smooth,
  then $X$ fails to contain $\mathfrak{D}$-points,
  and $X^*$ fails to contain weak$^*$ $\Delta$-points.
\end{cor}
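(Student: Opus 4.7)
The plan is to prove both statements by contradiction, using Theorem~\ref{thm:D-point/weak*-Delta-points_implies_weak*-super-Delta-point} to reduce the two hypotheses to a single object in the dual. Suppose that either $X$ contains a $\mathfrak{D}$-point or that $X^*$ contains a weak$^*$ $\Delta$-point. Then Theorem~\ref{thm:D-point/weak*-Delta-points_implies_weak*-super-Delta-point} yields a weak$^*$ super $\Delta$-point $x^*\in S_{X^*}$. Unravelling the definition and choosing a directed family of basic weak$^*$-neighbourhoods of $x^*$ in $B_{X^*}$, I would extract a net $(x_\alpha^*)\subset B_{X^*}$ with $x_\alpha^*\wsconv x^*$ and $\norm{x^* - x_\alpha^*}\to 2$. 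Since $\norm{x^*}=1$ and $\norm{x_\alpha^*}\leq 1$, the condition $\norm{x^* - x_\alpha^*}\to 2$ forces $\norm{x_\alpha^*}\to 1$, so after passing to a tail and renormalising $y_\alpha^* := x_\alpha^*/\norm{x_\alpha^*}$ one obtains a net on the unit sphere $S_{X^*}$ still satisfying $y_\alpha^*\wsconv x^*$ and $\norm{x^* - y_\alpha^*}\to 2$.

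The second ingredient is the standard duality that asymptotic uniform smoothness of $X$ is equivalent to weak$^*$ asymptotic uniform convexity of $X^*$. I would then invoke the fact, stated in the preamble to the corollary and going back to Lancien~\cite{Lancien95}, that every weak$^*$ AUC dual satisfies a (uniform) weak$^*$ Kadets property, that is, the weak$^*$ and the norm topologies agree on $S_{X^*}$. Applied to the net $(y_\alpha^*)$ this gives $\norm{x^* - y_\alpha^*}\to 0$, which directly contradicts $\norm{x^* - y_\alpha^*}\to 2$. This contradiction rules out both possibilities simultaneously and proves the corollary.

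The only potentially delicate point is the renormalisation step, but this is genuinely routine: the triangle inequality $2-\eps < \norm{x^*-x_\alpha^*}\leq 1+\norm{x_\alpha^*}$ gives $\norm{x_\alpha^*}\to 1$, after which $\norm{y_\alpha^* - x_\alpha^*} = 1-\norm{x_\alpha^*}\to 0$ ensures that replacing $x_\alpha^*$ by $y_\alpha^*$ preserves both the weak$^*$-convergence to $x^*$ and the norm distance $2$ to $x^*$. No further subtlety arises: the heavy lifting has all been done in Theorem~\ref{thm:D-point/weak*-Delta-points_implies_weak*-super-Delta-point}, and the corollary simply combines it with the classical asymptotic-geometry duality.
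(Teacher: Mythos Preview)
Your proposal is correct and follows essentially the same route as the paper: apply Theorem~\ref{thm:D-point/weak*-Delta-points_implies_weak*-super-Delta-point} to produce a weak$^*$ super $\Delta$-point in $X^*$, then observe that this is incompatible with the (uniform) weak$^*$ Kadets property enjoyed by weak$^*$ asymptotically uniformly convex duals via \cite{Lancien95}. The only difference is cosmetic: the paper cites \cite[Section~3]{MPRZ} for the incompatibility of weak$^*$ super $\Delta$-points with the weak$^*$ Kadets property, whereas you unpack this explicitly via the renormalisation step.
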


Also observe that we can clearly get something
more precise out of the proof of
Theorem~\ref{thm:D-point/weak*-Delta-points_implies_weak*-super-Delta-point}.
Indeed, looking back there, we can see that if a point
$x\in S_X$ is a $\mathfrak{D}$-point,
then the set $D(x)$ contains a weak$^*$ super $\Delta$-point.
So as a corollary, we get the following pointwise result
(see also the discussion following
Theorem~\ref{thm:asymptotic-properties_Delta-points}).

\begin{cor}\label{cor:asymptotically-smooth-points_not_D-points}
  No asymptotically smooth point is a $\mathfrak{D}$-point.
\end{cor}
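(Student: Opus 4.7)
The plan is to combine the refined form of Theorem~\ref{thm:D-point/weak*-Delta-points_implies_weak*-super-Delta-point} recalled just above (namely, that a $\mathfrak{D}$-point $x$ forces $D(x)$ to contain a weak$^*$ super $\Delta$-point) with a pointwise dualization of the standard duality between asymptotic uniform smoothness and weak$^*$ asymptotic uniform convexity. Suppose for contradiction that $x\in S_X$ is both asymptotically smooth and a $\mathfrak{D}$-point. By the refinement, some $x^*\in D(x)$ is a weak$^*$ super $\Delta$-point, so there is a net $(y_\alpha^*)\subset B_{X^*}$ with $y_\alpha^*\wsconv x^*$ and $\|x^*-y_\alpha^*\|\to 2$. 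In particular $y_\alpha^*(x)\to 1$, so $(y_\alpha^*)$ is eventually inside every weak$^*$-slice $S(x,\delta):=\{y^*\in B_{X^*}:y^*(x)>1-\delta\}$.

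The key step is to show that asymptotic smoothness at $x$ implies $\alpha(S(x,\delta))\to 0$ as $\delta\to 0$. Given $\eps>0$, pick $t>0$ with $\bar\rho(t,x)<\eps t$, witnessed by a closed finite-codimensional subspace $F\subseteq X$ such that $\|x+tz\|\leq 1+\eps t$ for every $z\in S_F$. For any $y^*\in S(x,\delta)$ and any $z\in S_F$ one has
\[
t\cdot y^*(z)=y^*(x+tz)-y^*(x)\leq (1+\eps t)-(1-\delta)=\eps t+\delta,
\]
hence $\|\restr{y^*}{F}\|\leq\eps+\delta/t$, i.e.\ $\dist(y^*,F^\perp)\leq\eps+\delta/t$. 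Since $F^\perp$ is finite-dimensional, splitting $y^*$ into an element of $F^\perp\cap 2B_{X^*}$ (norm-compact) plus a remainder of norm at most $\eps+\delta/t$ yields $\alpha(S(x,\delta))\leq 2(\eps+\delta/t)$, which can be made arbitrarily small.

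Fix $\eps\in(0,1)$ and $\delta>0$ small enough that $\alpha(S(x,\delta))<\eps$. Then $S(x,\delta)$ is covered by finitely many sets of norm-diameter at most $\eps$, so a cofinal subnet of $(y_\alpha^*)$ lies in a single such set. Fixing any $y_{\alpha_0}^*$ in this cofinal subnet and taking the weak$^*$ limit of $y_{\alpha'}^*-y_{\alpha_0}^*$ along the subnet (using weak$^*$ lower semicontinuity of the norm) yields $\|x^*-y_{\alpha_0}^*\|\leq\eps$, which contradicts $\|x^*-y_\alpha^*\|\to 2$ once $\alpha_0$ is chosen far enough in the subnet.

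The main obstacle is the second step: the classical AUS/weak$^*$-AUC duality is usually stated uniformly on the sphere, whereas here it must be established at the single asymptotically smooth point $x$, which requires carefully unwinding $\bar\rho(t,x)$ and controlling $\restr{y^*}{F}$ only pointwise. The remainder is a direct assembly of the refined theorem together with standard compactness manipulations in $(B_{X^*},w^*)$.
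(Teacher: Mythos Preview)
Your argument is correct and follows the same route as the paper's proof: both start from the refinement of Theorem~\ref{thm:D-point/weak*-Delta-points_implies_weak*-super-Delta-point} to obtain a weak$^*$ super $\Delta$-point $y^*\in D(x)$, both use that asymptotic smoothness at $x$ forces the Kuratowski measure of the weak$^*$ slices $S(x,\delta)$ to tend to $0$, and both conclude by incompatibility with the presence of $y^*$ in every such slice. The only difference is packaging: the paper obtains the second step by citing \cite[Corollary~3.4]{ALMP} and the third by citing \cite[Corollary~2.4]{VeeorgFunc} (which gives $\alpha(S)=2$ for every weak$^*$ slice containing a weak$^*$ $\Delta$-point), whereas you reprove both directly --- your second paragraph is precisely the computation behind \cite[Corollary~3.4]{ALMP}, and your third paragraph is an ad hoc version of the incompatibility. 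One small imprecision worth tightening: a cofinal \emph{subset} of a directed set need not itself be directed, so ``a cofinal subnet lies in a single such set'' should be rephrased; the cleanest fix is to observe that cofinality forces $x^*$ into the weak$^*$ closure of one of the covering sets $A_i$, and then weak$^*$ lower semicontinuity of the norm gives $\|x^*-z^*\|\leq\eps$ for any $z^*\in A_i$, which immediately contradicts $\|x^*-y_\alpha^*\|\to 2$.
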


\begin{proof}
  Let $X$ be a Banach space.
  If $x\in S_X$ is a $\mathfrak{D}$ point,
  then as observed in the previous discussion,
  the set $D(x)$ contains a weak$^*$ super $\Delta$-point $y^*$.
  As this point is also a weak$^*$ $\Delta$-point,
  it follows from \cite[Corollary~2.4]{VeeorgFunc} that $\alpha(S)=2$
  for every weak$^*$ slice $S$ of $B_{X^*}$ containing $y^*$.
  In particular, $\alpha(S(x, \delta)) = 2$ for every
  $\delta > 0$ since $y^*(x) = 1$.
  Then by \cite[Corollary~3.4]{ALMP},
  $x$ is not an asymptotically smooth point
  (as the Kuratowski measure of the weak$^*$ slices of the dual unit ball
  defined by an asymptotically smooth point goes to $0$ as
  $\delta$ goes to $0$).
\end{proof}

Finally, note that we also get the following $\mathfrak{D}$-version
of \cite[Corollary~4.6]{ALMP}.

\begin{cor}\label{cor:auc_refl_no_Dpts}
  If $X$ is asymptotically uniformly convex and reflexive,
  then neither $X$ nor $X^*$ contain $\mathfrak{D}$-points.
\end{cor}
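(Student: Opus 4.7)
The plan is to decompose the statement into its two halves and reduce each to earlier results in this section. The assertion for $X$ itself is essentially already recorded: any reflexive asymptotically uniformly convex space has Rolewicz' property $(\alpha)$, and the conclusion then follows at once from Corollary~\ref{cor:property-alpha_D-points}.

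For the assertion on $X^*$, the plan is to invoke asymptotic duality: $X$ is asymptotically uniformly convex if and only if $X^*$ is weak$^*$ asymptotically uniformly smooth, and under reflexivity weak$^*$ AUS of $X^*$ coincides with AUS of $X^*$ (since the weak$^*$ and weak topologies on $X^*$ agree, so the weak$^*$ modulus and the norm modulus are computed over the same families of subspaces). Once we know that $X^*$ is asymptotically uniformly smooth, Corollary~\ref{cor:sharp_asymptotic-properties_Delta-points} applied to the space $X^*$ immediately yields that $X^*$ contains no $\mathfrak{D}$-point.

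A slicker alternative route, more in the spirit of the present section, is to obtain the $X$ half from the $X^*$ half through Theorem~\ref{thm:D-point/weak*-Delta-points_implies_weak*-super-Delta-point}: if some $x\in S_X$ were a $\mathfrak{D}$-point, then $X^*$ would contain a weak$^*$ super $\Delta$-point $y^*$; by reflexivity the weak$^*$ slices of $B_{X^*}$ are exactly the slices, so $y^*$ would in fact be a super $\Delta$-point of $X^*$, hence a $\Delta$-point, hence a $\mathfrak{D}$-point of $X^*$, contradicting the conclusion already established for $X^*$.

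Either way, the only non-routine ingredient is the asymptotic duality used to pass from ``$X$ AUC and reflexive'' to ``$X^*$ AUS''; this is a well-known fact in asymptotic Banach space geometry, so there is no real obstacle. The corollary is a short package combining the duality result of Theorem~\ref{thm:D-point/weak*-Delta-points_implies_weak*-super-Delta-point} with the asymptotic-geometry consequences recalled at the start of the subsection.
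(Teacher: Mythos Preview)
Your proposal is correct and follows essentially the same approach as the paper: the paper handles $X$ via Corollary~\ref{cor:property-alpha_D-points} and $X^*$ via Corollary~\ref{cor:sharp_asymptotic-properties_Delta-points}, invoking without further comment that $X^*$ is asymptotically uniformly smooth---which is precisely the asymptotic duality step you spell out. Your alternative route for the $X$ half via Theorem~\ref{thm:D-point/weak*-Delta-points_implies_weak*-super-Delta-point} is a valid variation but not the one the paper takes.
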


\begin{proof}
  Since $X$ is reflexive and asymptotically uniformly convex,
  $X$ has no $\mathfrak{D}$-points by
  Corollary~\ref{cor:property-alpha_D-points}.
  Since $X^*$ is asymptotically uniformly smooth
  $X^*$ has no $\mathfrak{D}$-points by
  Corollary~\ref{cor:sharp_asymptotic-properties_Delta-points}.
\end{proof}

\subsection{Shrinking and boundedly complete unconditional bases}
\label{subsec:unconditional_bases}

We know from \cite{ALMT} that there exists
a Banach space $X$ with a $1$-unconditional basis
that admits a Daugavet point. This $X$ contains
copies of both $c_0$ and $\ell_1$.
In this section we show that this is no coincidence,
a consequence of our results is that if $X$ has
a shrinking or boundedly complete $1$-unconditional basis then
$X$ cannot contain $\Delta$-points
(see Corollary~\ref{cor:uncond_basis_c0l1} below
for the precise statement).

Instead of working with a basis we can work
with an approximating sequence of compact operators.
A kind of reverse bounded approximation property
will then prevent the existence of weak$^*$ super $\Delta$-points
in the dual in a similar way to \cite[Proposition~3.20]{MPRZ},
and as a consequence of
Theorem~\ref{thm:D-point/weak*-Delta-points_implies_weak*-super-Delta-point},
it will also prevent the existence of $\mathfrak{D}$-point in the space
and of weak$^*$ $\Delta$-points in the dual.
So here is the main theorem of this subsection.

\begin{thm}\label{thm:shrinking_RMAP_no_delta}
  Let $X$ be a Banach space.
  Assume that there exists a family
  $(T_\lambda)_{\lambda \in \Lambda}$ of compact operators on $X$
  with $\sup_{\lambda \in \Lambda} \norm{I_X - T_\lambda} < 2$,
  such that $T_\lambda^* \to I_{X^*}$ in the strong
  operator topology.
  Then $X$ contains no $\mathfrak{D}$-points,
  and $X^*$ contains no weak$^*$ $\Delta$-points.
\end{thm}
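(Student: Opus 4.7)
The plan is to apply the contrapositive of Theorem~\ref{thm:D-point/weak*-Delta-points_implies_weak*-super-Delta-point}: it suffices to prove that $X^*$ admits no weak$^*$ super $\Delta$-point, and both conclusions will then follow at once. So I fix $x^*\in S_{X^*}$ and aim to produce a weak$^*$-open neighborhood $W$ of $x^*$ in $B_{X^*}$ on which $\sup_{y^*\in W}\norm{x^*-y^*}<2$.

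Set $c:=\sup_{\lambda\in\Lambda}\norm{I_X-T_\lambda}<2$ and pick $\varepsilon>0$ with $c+2\varepsilon<2$. Since $T_\lambda^*\to I_{X^*}$ in the strong operator topology, I first choose $\lambda\in\Lambda$ with $\norm{x^*-T_\lambda^*x^*}<\varepsilon$. The key ingredient is the classical fact that, as $T_\lambda$ is compact, its adjoint $T_\lambda^*$ maps weak$^*$-convergent bounded nets to norm-convergent nets; equivalently $T_\lambda^*\colon (B_{X^*},w^*)\to(X^*,\norm{\cdot})$ is continuous. This is obtained by taking a finite $\varepsilon$-net $\{T_\lambda x_1,\ldots,T_\lambda x_n\}$ in the totally bounded set $T_\lambda(B_X)$, and using the identity $\norm{T_\lambda^*(y^*-x^*)}=\sup_{x\in B_X}|(y^*-x^*)(T_\lambda x)|$ to reduce control of the norm to the finitely many scalars $(y^*-x^*)(T_\lambda x_i)$. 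With this in hand, I take $W$ to be the weak$^*$-open neighborhood
\begin{equation*}
W:=\set{y^*\in B_{X^*}:\ |(y^*-x^*)(T_\lambda x_i)|<\varepsilon/3,\ i=1,\ldots,n}
\end{equation*}
of $x^*$, which by the above guarantees $\norm{T_\lambda^*y^*-T_\lambda^*x^*}<\varepsilon$ for all $y^*\in W$.

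Combining the three estimates via the triangle inequality and using the standard identity $\norm{I_{X^*}-T_\lambda^*}=\norm{I_X-T_\lambda}\leq c$, for every $y^*\in W$ I get
\begin{equation*}
\norm{x^*-y^*}\leq\norm{x^*-T_\lambda^*x^*}+\norm{T_\lambda^*x^*-T_\lambda^*y^*}+\norm{T_\lambda^*y^*-y^*}<\varepsilon+\varepsilon+c<2,
\end{equation*}
which contradicts the assumption that $x^*$ were a weak$^*$ super $\Delta$-point. The main subtlety is the weak$^*$-to-norm continuity of $T_\lambda^*$ on $B_{X^*}$, but as sketched above this is a routine consequence of compactness and total boundedness; all other steps are elementary triangle-inequality bookkeeping.
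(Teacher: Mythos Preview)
Your proof is correct and follows essentially the same route as the paper: reduce via Theorem~\ref{thm:D-point/weak*-Delta-points_implies_weak*-super-Delta-point} to excluding weak$^*$ super $\Delta$-points in $X^*$, then use the weak$^*$-to-norm continuity of $T_\lambda^*$ on $B_{X^*}$ (Schauder) together with the triangle-inequality split $\norm{x^*-y^*}\leq\norm{(I-T_\lambda^*)x^*}+\norm{T_\lambda^*(x^*-y^*)}+\norm{(I-T_\lambda^*)y^*}$. The paper phrases the conclusion in terms of nets rather than an explicit weak$^*$-neighborhood, and simply cites Schauder instead of sketching it, but the argument is the same; one cosmetic point is that your $\varepsilon$-net should be an $\varepsilon/3$-net (or the neighborhood radius adjusted) for the estimate $\norm{T_\lambda^*(y^*-x^*)}<\varepsilon$ to go through as written.
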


\begin{proof}[Proof of Theorem~\ref{thm:shrinking_RMAP_no_delta}]
  Let $\eps>0$ be such that
  \begin{equation*}
    \sup_{\lambda \in \Lambda} \norm{I - T_\lambda} + 4\eps \le 2.
  \end{equation*}

  By
  Theorem~\ref{thm:D-point/weak*-Delta-points_implies_weak*-super-Delta-point},
  it is enough to show that if a net
  $(x_\alpha^*)_{\alpha\in\mathcal{A}} \subset B_{X^*}$ converges
  weak$^*$ to some $y^* \in B_{X^*}$,
  then for every $\alpha_0 \in \mathcal{A}$,
  there exists $\alpha \succeq \alpha_0$ such that
  \begin{equation*}
    \|y^* - x_{\alpha}^*\| \le 2 - 2\eps.
  \end{equation*}

  Pick $\lambda \in \Lambda$ such that
  $\norm{y^* - T_\lambda^*y^*} \leq \eps$.
  By Schauder's theorem, $T_\lambda^*$ is continuous
  as a map from $(B_{X^*},w^*)$ to $(X^*,\|\cdot\|)$
  (see e.g. \cite[Theorem~6.26]{BowersKalton})
  and it follows that
  $(T_\lambda^*(x_\alpha^*))_{\alpha\in\mathcal{A}}$
  converges in norm to $T_\lambda^*(y^*)$.
  Thus there exists $\alpha_0 \in\mathcal{A}$ such that
  $\norm{T_\lambda^*(y^* - x_{\alpha}^*)} \leq  \eps$
  for every $\alpha\succeq \alpha_0$.

  Then for every $\alpha\succeq \alpha_0$, we get
  \begin{align*}
    \norm{y^* - x_\alpha^*}
    &\leq
    \norm{y^* - T_\lambda^*y^*}
    +
    \norm{T_\lambda^*(y^* - x_\alpha^*)}
    +
    \norm{x_\alpha^* - T_\lambda^*x_\alpha^*}
    \\
    &\leq
    \norm{I^* - T_\lambda^*} + 2\eps
    \\
    &\leq
    2 - 2\eps,
  \end{align*}
  and the conclusion follows.
\end{proof}

\begin{rem}
  In Theorem~\ref{thm:shrinking_RMAP_no_delta} we assume that
  $X^*$ satisfies a version of the bounded compact approximation
  property with conjugate operators. It is clear from
  the above proof that actually all we need is a family
  $\mathcal{A} \subset \mathcal{K}(X)$ such that
  $\sup_{T \in \mathcal{A}} \|I_X - T\| < 2$ and such that
  given $\varepsilon > 0$ and $x^* \in X^*$ there exists
  $T \in \mathcal{A}$ such that $\|x^* - T^* x^*\| < \varepsilon$.
\end{rem}

Let us now collect a few corollaries from
Theorem~\ref{thm:shrinking_RMAP_no_delta}.

\begin{cor}\label{cor:refl_ufdd_no_Dpts}
  Let $X$ be a reflexive Banach space with a shrinking basis
  with partial sum projections $(P_k)$.
  If $\sup_{k \in \N} \|I - P_k\| < 2$,
  then $X$ and $X^*$ contain no $\mathfrak{D}$-points.

  In particular, if $X$ is a reflexive Banach space with
  $k$-unconditional basis for $k<2$,
  then $X$ and $X^*$ contain no $\mathfrak{D}$-points.
\end{cor}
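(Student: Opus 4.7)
The plan is to apply Theorem~\ref{thm:shrinking_RMAP_no_delta} twice: first to $X$ with the family $(P_n)$ of partial sum projections, and then to $X^*$ with the partial sum projections of the dual basis. For $X$ itself: the $P_n$ are finite-rank, hence compact, the hypothesis furnishes $\sup_n \|I_X - P_n\| < 2$ directly, and the shrinking property of the basis is precisely the assertion that $P_n^* \to I_{X^*}$ in the strong operator topology. The theorem then applies and $X$ contains no $\mathfrak{D}$-points.

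For $X^*$, shrinkingness gives that $(e_n^*)$ is a Schauder basis of $X^*$, and its partial sum projections are exactly $Q_n := P_n^*$. These are compact (as conjugates of finite-rank operators), and
\begin{equation*}
\|I_{X^*} - Q_n\| = \|(I_X - P_n)^*\| = \|I_X - P_n\| < 2
\end{equation*}
uniformly in $n$. Reflexivity now enters: under the canonical identification $X^{**} = X$, the biadjoint $Q_n^* = P_n^{**}$ coincides with $P_n$, and $(P_n)$ converges strongly to $I_X$ by the very definition of a Schauder basis. Hence $Q_n^* \to I_{X^{**}}$ in the strong operator topology, and a second application of Theorem~\ref{thm:shrinking_RMAP_no_delta} yields that $X^*$ contains no $\mathfrak{D}$-points either.

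For the ``in particular'' statement, I would first invoke James' classical theorem: a Banach space with an unconditional basis is reflexive if and only if the basis is both shrinking and boundedly complete, so the $k$-unconditional basis of the reflexive $X$ is automatically shrinking. Next, a standard averaging argument bounds the coordinate projections: for any $A \subseteq \N$, writing $P_A = \frac{1}{2}(I + M_\varepsilon)$ where $M_\varepsilon$ is the sign-change operator with $\varepsilon_n = 1$ on $A$ and $-1$ on its complement (so that $\|M_\varepsilon\| \leq k$) gives $\|P_A\| \leq (1+k)/2$. Applied to $I_X - P_n = P_{\{n+1, n+2, \ldots\}}$, this yields $\sup_n \|I_X - P_n\| \leq (1+k)/2 < 2$ whenever $k < 2$, and the first part of the corollary applies.

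The result is essentially a packaging exercise on top of Theorem~\ref{thm:shrinking_RMAP_no_delta} and no step presents a substantial conceptual obstacle. The most delicate verification is probably the identity $Q_n^* = P_n$ under the reflexive identification, which reduces to the fact that partial sum projections respect the biorthogonal pairing together with $X^{**} = X$.
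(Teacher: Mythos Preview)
Your proof is correct and follows essentially the intended approach: apply Theorem~\ref{thm:shrinking_RMAP_no_delta} once to $X$ (using that shrinking means $P_n^*\to I_{X^*}$ strongly) and once to $X^*$ (using reflexivity to identify $Q_n^*=P_n^{**}$ with $P_n$, which converges strongly to $I_X$). One minor remark: in the ``in particular'' clause, the paper's convention for ``$k$-unconditional'' appears to be the suppression constant, so $\|I-P_n\|\leq k<2$ holds directly without the averaging detour through $M_\varepsilon$; your argument is still correct, just slightly more elaborate than necessary.
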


In \cite[Theorem~2.17]{ALMT} it was shown that Banach spaces
with a subsymmetric basis have no $\Delta$-points and this
can be applied to show that the Schlumprecht space has no
$\Delta$-points.
Unlike previous results in this direction
Corollary~\ref{cor:refl_ufdd_no_Dpts} applies also to
the Tsirelson space and many of its relatives.

From Theorem~\ref{thm:shrinking_RMAP_no_delta} and the classic results
from James on bases in Banach spaces, we also get the result announced
at start of this subsection.

\begin{cor}\label{cor:uncond_basis_c0l1}
  Let $X$ be a Banach space with
  shrinking $k$-unconditional basis for $k<2$,
  then $X$ contains no $\mathfrak{D}$-points
  and $X^*$ contains no weak$^*$ $\Delta$-points.

  Let $X$ be a Banach space with a monotone
  boundedly complete $k$-unconditional basis for $k<2$,
  then $X$ contains no $\Delta$-points.

  In particular, a Banach space with 1-unconditional basis
  and a $\Delta$-point contains a copy of $c_0$ and $\ell_1$.
\end{cor}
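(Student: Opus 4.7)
The plan is to derive Corollary~\ref{cor:uncond_basis_c0l1} directly from Theorem~\ref{thm:shrinking_RMAP_no_delta}, combined with the classical duality between shrinking and boundedly complete unconditional bases and James's characterization of these notions.

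For the first statement, I would take the partial sum projections $(P_n)_{n\in\N}$ associated with a shrinking $k$-unconditional basis $(e_n)$. Each $P_n$ is finite rank, hence compact. Since $I-P_n$ is itself a coordinate projection onto $\overline{\linsp}\set{e_i : i>n}$, the $k$-unconditionality gives $\norm{I-P_n}\leq k<2$ uniformly in $n$. The shrinkingness of $(e_n)$ is by definition equivalent to $P_n^* x^* \to x^*$ in norm for every $x^*\in X^*$, i.e.\ convergence of $(P_n^*)$ to $I_{X^*}$ in the strong operator topology. Theorem~\ref{thm:shrinking_RMAP_no_delta} applied to the family $(P_n)$ then delivers both conclusions.

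For the second statement, I would exploit the standard fact that a space with a monotone boundedly complete basis is canonically isometric to a dual space: namely $X\cong Y^*$, where $Y := \overline{\linsp}\set{e_n^* : n\in\N}\subset X^*$ and the dual basis $(e_n^*)$ is a shrinking $k$-unconditional basis of $Y$. Applying the first statement to $Y$ yields that its dual $X = Y^*$ has no weak$^*$ $\Delta$-points. Since every weak$^*$-slice of $B_X$ is a slice of $B_X$, any $\Delta$-point of $X$ would in particular be a weak$^*$ $\Delta$-point, so $X$ admits no $\Delta$-point.

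For the final assertion, a $1$-unconditional basis is automatically monotone; in James's dichotomy for unconditional bases, the basis is shrinking iff $\ell_1\not\hookrightarrow X$ and boundedly complete iff $c_0\not\hookrightarrow X$. A $\Delta$-point in $X$ rules out shrinkingness by the first statement and boundedly completeness by the second, giving copies of both $\ell_1$ and $c_0$ inside $X$. The only delicate ingredient is the duality argument in the second paragraph — one must check that the biorthogonal basis of a monotone boundedly complete $k$-unconditional basis is itself shrinking and $k$-unconditional so that the first statement does apply — but this is entirely classical, and no new idea beyond Theorem~\ref{thm:shrinking_RMAP_no_delta} is required.
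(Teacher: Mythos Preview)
Your proposal is correct and follows essentially the same route as the paper's own proof: apply Theorem~\ref{thm:shrinking_RMAP_no_delta} to the partial sum projections for the shrinking case, dualize via the classical identification $X\cong Z^*$ with $Z=\csp{}(e_n^*)$ for the monotone boundedly complete case, and finish with James's dichotomy for unconditional bases. The paper is slightly terser in the second step (it simply notes that the unconditionality constants of $(e_n)$ and $(e_n^*)$ agree and that ``the second part follows from the first''), whereas you spell out the passage from ``no weak$^*$ $\Delta$-points in $Y^*$'' to ``no $\Delta$-points in $X$'', but the argument is the same.
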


\begin{proof}
  The first part follows directly from
  Theorem~\ref{thm:shrinking_RMAP_no_delta}
  using the partial sum projections.

  If $X$ has a monotone boundedly complete basis $(e_n)$
  then the biorthogonal functionals $(e_n^*)$ is a shrinking
  basis for $Z = \csp{}(e_n^*)$ and $X$ is isometric to
  $Z^*$ (c.f. e.g \cite[Theorem~3.2.15]{AlbiacKalton}).
  The unconditionality constants for $(e_n)$ and $(e_n^*)$ are the
  same hence the second part follows from the first.

  Finally, if $(e_n)$ is an unconditional basis for $X$, then
  it fails to be shrinking if and only if $X$ contains
  a subspace isomorphic to $\ell_1$
  (c.f. e.g. \cite[Theorem~3.3.1]{AlbiacKalton})
  and
  it fails to be boundedly complete if and only if $X$
  contains a subspace isomorphic to $c_0$
  (c.f. e.g. \cite[Theorem~3.3.2]{AlbiacKalton}).
  Since a 1-unconditional basis is monotone the last part
  of the statement follows from the first two.
\end{proof}

\begin{rem}
  Again, note that those results are sharp, as
  every norm one element of $\ell_1$ with infinite support
  is a $\mathfrak{D}$-point
  (see \cite[Proposition~2.3]{AHLP})
  and every norm-one sequence in $c$ converging
  to 1 is a Daugavet point \cite[Theorem~3.4]{AHLP}.

  Also, note that if we view $\ell_1$ as the dual of $c$,
  then $\ell_1$ contains a weak$^*$ super $\Delta$-point
  by
  Theorem~\ref{thm:D-point/weak*-Delta-points_implies_weak*-super-Delta-point}.
  However, $\ell_1$ as the dual of $c_0$ contains no
  weak$^*$ $\Delta$-points by Corollary~\ref{cor:uncond_basis_c0l1}.
\end{rem}

We have one final corollary of this subsection.
Recall that $\mathfrak{D}$-points can be lifted from
subspaces.

\begin{cor}\label{cor:subsp_of_uncond_basis_no_D-pt}
  If $X$ is a subspace of a Banach space with
  a shrinking $k$-unconditional basis for $k<2$,
  then $X$ contains no $\mathfrak{D}$-points.
\end{cor}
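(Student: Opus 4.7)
The plan is to combine two results already established in the paper: the first is the passage property for $\mathfrak{D}$-points recorded right after Definition~\ref{defn:slice_diametral_points}, which says that if $X$ is a subspace of $Y$ and $x \in S_X$ is a $\mathfrak{D}$-point of $X$, then $x$ remains a $\mathfrak{D}$-point when viewed as an element of $Y$. The second is the first half of Corollary~\ref{cor:uncond_basis_c0l1}, which asserts that a Banach space $Y$ with a shrinking $k$-unconditional basis for $k<2$ contains no $\mathfrak{D}$-points.

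The argument then reduces to a one-line contrapositive. Suppose $X$ is a subspace of such a $Y$ and, for contradiction, that some $x \in S_X$ is a $\mathfrak{D}$-point of $X$. By the preservation property, $x \in S_Y$ is a $\mathfrak{D}$-point of $Y$, contradicting Corollary~\ref{cor:uncond_basis_c0l1}. Hence no such $x$ exists.

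The only step that deserves a second look is the preservation property itself, since $\mathfrak{D}$-points are defined through the set $D(x) \subseteq S_{X^*}$, which depends a priori on the ambient space. But this is exactly the observation the authors already made in the introduction: every slice $S(B_Y, y^*, \varepsilon)$ of $B_Y$ with $y^* \in D_Y(x)$ restricts to a slice $S(B_X, y^*|_X, \varepsilon)$ of $B_X$ containing $x$, with $y^*|_X \in D_X(x)$, so a $\mathfrak{D}$-point of the subspace forces diameter $2$ in the corresponding ambient slice as well. Thus there is no real obstacle; the corollary is genuinely a direct consequence of the two earlier results and amounts to noting that the ``no $\mathfrak{D}$-points'' conclusion is hereditary to subspaces.
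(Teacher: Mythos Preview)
Your proposal is correct and matches the paper's approach exactly: the paper presents this corollary without an explicit proof, prefacing it only with the remark ``Recall that $\mathfrak{D}$-points can be lifted from subspaces,'' which together with Corollary~\ref{cor:uncond_basis_c0l1} is precisely the argument you give. Your added verification of why the lifting property holds for $\mathfrak{D}$-points is a welcome elaboration of what the paper leaves implicit.
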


Let us note that there is no quotient version of
Corollary~\ref{cor:subsp_of_uncond_basis_no_D-pt}
since every separable Banach space is a quotient of $\ell_1$.
However, it is natural to ask the following.

\begin{quest}
  Can we conclude that
  $X^*$ contains no weak$^*$ $\Delta$-points
  if $X$ is a subspace of a Banach space with
  a shrinking $k$-unconditional basis for $k<2$?
\end{quest}

According to Cowell and Kalton \cite{MR2646086}
a separable Banach space $X$ has \emph{property $(au^*)$}
if
$\lim_{n \to \infty} \|x^* + x_n^*\| =
\lim_{n \to \infty} \|x^* - x_n^*\|$
whenever $x^* \in X^*$, $(x_n^*)_{n \geq 1}$ is a weak$^*$-null
sequence and both limits exist.
In particular, spaces with Kalton's property (M$^*$)
and spaces with a 1-unconditional finite dimensional decomposition
satisfy this property.

It was shown in \cite[Theorem~4.2]{MR2646086} that
a separable Banach space $X$ has property $(au^*)$
if and only if for every $\eps > 0$ we have that
$X$ is $(1+\eps)$-isomorphic to a subspace of
a Banach space $Y$ with a shrinking 1-unconditional basis.
This is in turn equivalent to the fact that for every
$\eps > 0$ we have that $X$ is isometric to a subspace
of a Banach space $Y$ with a shrinking
$(1+\eps)$-unconditional basis.
Hence by Corollary~\ref{cor:subsp_of_uncond_basis_no_D-pt}
separable Banach spaces with property $(au^*)$
have no $\mathfrak{D}$-points.

\subsection{Sequential super points and Hahn--Banach smooth spaces}
\label{subsubsec:Hahn-Banach-smooth-spaces}

Among the first non-reflexive examples of Banach spaces
with no $\Delta$-points were some M-embedded spaces
and their duals, e.g. $c_0$ and $\ell_1$,
$\mathcal{K}(\ell_2)$ and its dual,
and the Schreier space and its dual.
For the Schreier space and its dual this was shown in
\cite{ALM}, but it also follows from
Corollary~\ref{cor:uncond_basis_c0l1}.
The first two are covered by
Corollary~\ref{cor:sharp_asymptotic-properties_Delta-points}
and, in fact, both $c_0$ and $\mathcal{K}(\ell_2)$
have Kalton's property (M$^*$).
A not completely unnatural question to ask is the following:

\begin{quest}\label{quest:M-embedded}
  Let $X$ be a non-reflexive M-embedded Banach space.
  Do $X$ and its dual $X^*$ fail to contain $\Delta$-points?
\end{quest}

In this section we will show that the answer is positive
when $X^*$ is a Lipschitz-free space or
more generally when $X^*$ is Kadets--Klee.
However, the following example shows that in general
the answer is negative.

\begin{example}\label{exmp:non_refl_M_with_Delta}
  There exists a non-reflexive M-embedded Banach space $X$
  such that both $X$ and $X^*$ have a super $\Delta$-point.

  Let $Y := (\ell_2,\nnorm{\cdot})$ be the renorming
  of $\ell_2$ from Theorem~\ref{thm:l2_renorm_e1_delta_not_daugavet}.
  Then both $Y$ and $Y^*$ admit a super $\Delta$-point.
  Let $X := c_0(Y)$. Then both $X$ and $X^* = \ell_1(Y^*)$
  isometrically contain a subspace with a super $\Delta$-point.
  Since such points can be lifted to any superspace
  both $X$ and $X^*$ admit a super $\Delta$-point.
  Reflexive spaces are trivially M-embedded hence
  $X$ is M-embedded by \cite[Theorem~III.1.6]{HWW}.
\end{example}

Before we move on to Hahn--Banach smooth spaces in more detail,
let us point out that
Corollary~\ref{cor:sharp_asymptotic-properties_Delta-points} already
provides a positive answer to Question~\ref{quest:M-embedded} for
$X^* = \lipfree{M}$ where $M$ is a proper purely 1-unrectifiable metric space.
Indeed, we have the following result.

\begin{cor}\label{cor:proper_dual_no_delta}
  If $M$ is a proper metric space,
  then $\lip_0(M)$ is M-embedded and does not admit
  $\mathfrak{D}$-points.

  If, in addition, $M$ is a proper purely 1-unrectifiable metric space,
  then $\lipfree{M}$, the dual of $\lip_0(M)$,
  has no weak$^*$ $\Delta$-points.
\end{cor}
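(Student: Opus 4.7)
The plan is to address the three claims of the corollary separately, then tie the last two together through Corollary~\ref{cor:sharp_asymptotic-properties_Delta-points}. The M-embedding of $\lip_0(M)$ for proper $M$ is a known result of Dalet (and is essentially folklore in the Lipschitz-free context), so I would cite this directly rather than reprove it.

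For the absence of $\mathfrak{D}$-points in $\lip_0(M)$, the key input is already recorded in the paper just before Proposition~\ref{prop:FV_almost_square}: when $M$ is proper, $\lip_0(M)$ is $(1+\varepsilon)$-isometric to a subspace of $c_0$ for every $\varepsilon>0$. Since $c_0$ is asymptotically uniformly smooth and this property is inherited by subspaces and stable under $(1+\varepsilon)$-isomorphism (up to a perturbation of the modulus that vanishes as $\varepsilon\to 0$), $\lip_0(M)$ is itself asymptotically uniformly smooth. The first assertion of Corollary~\ref{cor:sharp_asymptotic-properties_Delta-points} then delivers the conclusion.

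For the second part, under the additional hypothesis that $M$ is proper and purely 1-unrectifiable, a theorem from the literature on Lipschitz-free spaces (in the spirit of the duality results of Aliaga--Perneck\'a--Petitjean--Prochazka) gives that $\lipfree{M}$ is canonically the dual of $\lip_0(M)$. Combined with the AUS property of $\lip_0(M)$ already established, the second assertion of Corollary~\ref{cor:sharp_asymptotic-properties_Delta-points}, which says that duals of AUS spaces contain no weak$^*$ $\Delta$-points, completes the argument.

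The only real obstacle is bookkeeping: pinning down the precise references for (a) the M-embedding of $\lip_0(M)$ when $M$ is proper, and (b) the identification $\lipfree{M}=\lip_0(M)^*$ under the purely 1-unrectifiable assumption. Once these are in hand, the deduction is entirely a matter of applying Corollary~\ref{cor:sharp_asymptotic-properties_Delta-points}; no new technical work is needed.
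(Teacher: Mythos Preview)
Your proposal is correct and follows essentially the same route as the paper's own proof: both use Dalet's $(1+\varepsilon)$-embedding of $\lip_0(M)$ into $c_0$ to obtain M-embeddedness (the paper makes the three-ball property explicit) and asymptotic uniform smoothness, then invoke Corollary~\ref{cor:sharp_asymptotic-properties_Delta-points} together with the identification $\lipfree{M}=\lip_0(M)^*$ for proper purely 1-unrectifiable $M$ from \cite{AGPP}. The only cosmetic difference is that the paper pins down the AUS step with a precise citation (\cite[Lemma~4.4.1]{CP}) rather than arguing informally about inheritance under $(1+\varepsilon)$-isomorphism.
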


\begin{proof}
  Let $M$ be a proper metric space.
  Dalet showed that, for any $\varepsilon > 0$,
  the space $\lip_0(M)$ is $(1+\varepsilon)$-isomorphic
  to a subspace of $c_0$ \cite[Lemma~3.9]{MR3376824}.
  From the three-ball property \cite[Theorem~I.2.2]{HWW}
  it follows that $\lip_0(M)$ is M-embedded.
  It also follows that $\lip_0(M)$ is asymptotically uniformly smooth
  (see e.g. \cite[Lemma~4.4.1]{CP}).
  By Corollary~\ref{cor:sharp_asymptotic-properties_Delta-points}
  we get that $\lip_0(M)$ does not admit $\mathfrak{D}$-points
  and its dual has no weak$^*$ $\Delta$-points.

  It is known that if $M$ is a proper metric space,
  then $\lipfree{M}$ is a dual space if and only if $M$ is
  purely 1-unrectifiable and, in that case,
  $\lip_0(M)$ is a predual \cite[Theorem~3.2]{AGPP}.
  Note that duals of M-embedded spaces are L-summands in their biduals
  and any Banach space has at most one predual that is M-embedded
  \cite[Proposition~IV.1.9]{HWW}.
\end{proof}

It was proved by Fabian and Godefroy \cite[Theorem~3]{FG} that every
M-embedded space is weakly compactly generated and Asplund.
In particular, the dual of any M-embedded space has the RNP.
In fact, every M-embedded space $X$ is
\emph{Hahn--Banach smooth} (see e.g.  \cite[Proposition~I.1.12]{HWW}),
meaning that every $x^*\in X^*$ has a unique norm-preserving extension
to $X^{**}$. 

Sullivan proved in \cite{Sullivan} that for a
Hahn--Banach smooth space $X$, the relative weak and weak$^*$
topologies on $B_{X^*}$ agree on $S_{X^*}$
(also see \cite[III.Lemma~2.14]{HWW}).
So a direct consequence of
Theorem~\ref{thm:D-point/weak*-Delta-points_implies_weak*-super-Delta-point}
is that if $X$ is Hahn--Banach smooth, and $X$ admits a
$\mathfrak{D}$-point or $X^*$ admits a weak$^*$ $\Delta$-point,
then $X^*$ admits a super $\Delta$-point.
In particular, $X^*$ fails the Kadets property.

Note that the sequential version of the Kadets property
(the Kadets--Klee property, see below for the definition) is not incompatible
with super $\Delta$-points in general since
there exists a Banach space with the Daugavet property
and the Schur property \cite{KW_Schur+Daugavet}.
However, it was pointed out in \cite[Remark~3.5]{MPRZ} that natural
sequential versions of those points would automatically prevent the
Schur property.
So let us introduce the following.

\begin{defn}
  Let $X$ be a Banach space.
  We say that
  \begin{enumerate}
  \item
    a point $x\in S_X$ is a \emph{sequential super $\Delta$-point}
    if there exists a sequence $(x_n)_{n\geq 1}$ in $S_X$
    that converges weakly to $x$ and such that $\norm{x-x_n}\to 2$.
  \item
    a point $x^*\in S_{X^*}$ is
    a \emph{weak$^*$ sequential super $\Delta$-point}
    if there exists a sequence $(x_n^*)_{n\geq 1}$ in $S_{X^*}$
    that converges weak$^*$ to $x^*$ and such that
    $\norm{x^*-x_n^*}\to 2$.
  \end{enumerate}
\end{defn}

Clearly, every (weak$^*$) sequential super $\Delta$-point is a
(weak$^*$) super $\Delta$-point,
and as we pointed out in the previous discussion,
the converse does not hold in general.

However, it is clear that weak$^*$ sequential super $\Delta$-points 
coincide with weak$^*$ super $\Delta$-points in duals of separable spaces.
With respect to the weak topology, recall that it is always possible by the results from
Rosenthal to extract from any given bounded weakly converging net 
in a space that does not contain $\ell_1$ a weakly converging 
sequence, so sequential super $\Delta$-point and super $\Delta$-points always coincide in this 
context. In particular the two notions are equivalent in Asplund spaces.

As mentioned, sequential super $\Delta$-points are incompatible with
the Schur property.
More precisely, they are incompatible with the Kadets--Klee property.
Recall that a (dual) Banach space $X$  is said to be (weak$^*$)
\emph{Kadets--Klee} if every for every sequence
$(x_n)_{n\geq 1}$ in $S_X$, and for every $x \in S_X$,
we have that $x_n\to x$ weakly (respectively weak$^*$)
if and only if $\norm{x-x_n}\to 0$.

Clearly, the existence of a (weak$^*$) sequential super $\Delta$-point
is incompatible with the (weak$^*$) Kadets--Klee property.

It turns out that
Theorem~\ref{thm:D-point/weak*-Delta-points_implies_weak*-super-Delta-point}
actually provides a weak$^*$ sequential super $\Delta$-point under the
assumption that the unit ball of the dual space is weak$^*$
sequentially compact.

\begin{thm}
  \label{thm:sequential_D-point/weak*-Delta-points_implies_weak*-super-Delta-point}
  Let $X$ be a Banach space such that
  $B_{X^*}$ is weak$^*$ sequentially compact.
  If $X$ contains a $\mathfrak{D}$-point, or if $X^*$ contains a
  weak$^*$ $\Delta$-point,
  then $X^*$ contains a sequential weak$^*$ super $\Delta$-point.
\end{thm}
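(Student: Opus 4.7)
The plan is to reduce to the case where $X^*$ contains a weak$^*$ $\Delta$-point, and then to adapt the second half of the proof of Theorem~\ref{thm:D-point/weak*-Delta-points_implies_weak*-super-Delta-point} by replacing the subnet extraction with a subsequence extraction, invoking the weak$^*$ sequential compactness of $B_{X^*}$. For the reduction, if $X$ contains a $\mathfrak{D}$-point, then by Theorem~\ref{thm:D-point/weak*-Delta-points_implies_weak*-super-Delta-point} the dual $X^*$ contains a weak$^*$ super $\Delta$-point. Since every weak$^*$ slice is a weak$^*$ open set, such a point is automatically a weak$^*$ $\Delta$-point. Hence without loss of generality we may assume that $X^*$ contains a weak$^*$ $\Delta$-point $x_0^*$.

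Starting from $x_0^*$, I would run the same recursive construction as in the second half of the proof of Theorem~\ref{thm:D-point/weak*-Delta-points_implies_weak*-super-Delta-point} to obtain sequences $(x_i)_{i\geq 1}\subset S_X$ and $(x_i^*)_{i\geq 1}\subset S_{X^*}$ satisfying $-x_i^*(x_i) > 1 - 2^{-(i+1)}$ for every $i\geq 1$ and $\sum_{i\in I} x_n^*(x_i) > |I| - 1$ for every finite $I\subseteq\{1,\ldots,n-1\}$. Next, rather than extracting a weak$^*$-convergent subnet, I would invoke the weak$^*$ sequential compactness of $B_{X^*}$ to extract a subsequence $(x_{n_j}^*)_{j\geq 1}$ weak$^*$-convergent to some $y^*\in B_{X^*}$. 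Letting $j\to\infty$ in the averaging inequality $\frac{1}{|I|}\sum_{i\in I} x_{n_j}^*(x_i) > 1 - \frac{1}{|I|}$, valid as soon as $n_j > \max I$, yields $y^*\left(\frac{1}{|I|}\sum_{i\in I} x_i\right)\geq 1 - \frac{1}{|I|}$ for every finite subset $I\subseteq\N$.

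The key step is then the construction of a further subsequence $(x_{m_k}^*)_{k\geq 1}$ of $(x_{n_j}^*)_{j\geq 1}$ along which $y^*(x_{m_k})\to 1$. Given $m_{k-1}$, I would take a block $I_k = \{n_{j_1},\ldots,n_{j_k}\}$ of $k$ consecutive indices of the subsequence $(n_j)$ with $n_{j_1} > m_{k-1}$, apply the averaging inequality to obtain $y^*\left(\frac{1}{k}\sum_{i\in I_k} x_i\right)\geq 1-\frac{1}{k}$, and then select $m_k\in I_k$ such that $y^*(x_{m_k}) > 1 - \frac{1}{k}$. Since $(x_{m_k}^*)_{k\geq 1}$ remains a subsequence of the weak$^*$-convergent sequence $(x_{n_j}^*)_{j\geq 1}$, it still converges weak$^*$ to $y^*$. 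Combining $y^*(x_{m_k})\to 1$ with $x_{m_k}^*(x_{m_k})\to -1$, which follows from the recursive construction, one obtains
\begin{equation*}
\|y^* - x_{m_k}^*\| \geq y^*(x_{m_k}) - x_{m_k}^*(x_{m_k}) \longrightarrow 2,
\end{equation*}
so $y^*$ is a weak$^*$ sequential super $\Delta$-point. The main technical obstacle is this diagonal-like second extraction, but the consecutive-block selection above resolves it cleanly without requiring any additional hypothesis on $X$.
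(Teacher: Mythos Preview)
Your proposal is correct and follows exactly the approach the paper takes: reduce the $\mathfrak{D}$-point case to the weak$^*$ $\Delta$-point case via Theorem~\ref{thm:D-point/weak*-Delta-points_implies_weak*-super-Delta-point}, then rerun the second half of that proof replacing subnet extractions by subsequence extractions using weak$^*$ sequential compactness. The paper itself leaves the details to the reader (``it is clear that we can use weak$^*$ sequential compactness to extract sequences instead of nets at each key step''), whereas you have actually spelled out the block-selection argument for the second extraction; the only cosmetic slip is that the limit inequality gives $y^*(x_{m_k})\geq 1-\tfrac{1}{k}$ rather than a strict inequality, which is of course immaterial for the conclusion.
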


\begin{proof}
  If $X$ contains a $\mathfrak{D}$-point,
  then by
  Theorem~\ref{thm:D-point/weak*-Delta-points_implies_weak*-super-Delta-point}
  $X^*$ admits a weak$^*$ super $\Delta$-point,
  hence a weak$^*$ $\Delta$-point,
  so it is sufficient to prove the second part of the statement.

  So let us assume that $X^*$ admits a weak$^*$ $\Delta$-point $x^*$.
  Looking back at the second part of the proof of
  Theorem~\ref{thm:D-point/weak*-Delta-points_implies_weak*-super-Delta-point},
  it is clear that we can use weak$^*$ sequentially compactness to
  extract sequences instead of nets at each key step, so that
  we end up with a sequential weak$^*$ super $\Delta$-point $y^*$.
  The conclusion follows.
\end{proof}

Stegall proved in \cite[Theorem~3.5]{Stegall81} that if a Banach space $X$
is weak Asplund, then the unit ball of $X^*$ is weak$^*$ sequentially compact.
So every Asplund space satisfies this property, and the same goes for
weakly compactly generated spaces by the results from Asplund's
seminal paper \cite{Asplund68}.

Smith and Sullivan proved in \cite{MR0458125} that
Hahn--Banach smooth spaces are Asplund,
so for Hahn--Banach smooth spaces, we get the following corollary.

\begin{cor}\label{cor:HBS_D-point_weak*-Delta_dual-seq-super-Delta}
  Let $X$ be a Hahn--Banach smooth space.
  If $X$ admits a $\mathfrak{D}$-point or if $X^*$ admits a weak$^*$
  $\Delta$-point, then $X^*$ contains a sequential super
  $\Delta$-point. In particular, $X^*$ fails to be Kadets--Klee.
\end{cor}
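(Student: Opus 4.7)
The plan is to chain together three results already at our disposal: Smith--Sullivan's theorem that Hahn--Banach smooth spaces are Asplund, the sequential extraction result from Theorem~\ref{thm:sequential_D-point/weak*-Delta-points_implies_weak*-super-Delta-point}, and Sullivan's result that the weak and weak$^\ast$ topologies coincide on $S_{X^\ast}$ when $X$ is Hahn--Banach smooth. The role of the Hahn--Banach smoothness hypothesis thus splits into two pieces: it ensures weak$^\ast$ sequential compactness of $B_{X^\ast}$ (via Asplundness) so that Theorem~\ref{thm:sequential_D-point/weak*-Delta-points_implies_weak*-super-Delta-point} applies, and it allows us to upgrade weak$^\ast$ convergence in $S_{X^\ast}$ to weak convergence.

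First, since $X$ is Hahn--Banach smooth, the Smith--Sullivan theorem from \cite{MR0458125} gives that $X$ is Asplund. By Stegall's theorem \cite[Theorem~3.5]{Stegall81} (cited in the paragraph preceding the statement), $B_{X^\ast}$ is then weak$^\ast$ sequentially compact. Under either of the hypotheses ($X$ admits a $\mathfrak{D}$-point, or $X^\ast$ admits a weak$^\ast$ $\Delta$-point), Theorem~\ref{thm:sequential_D-point/weak*-Delta-points_implies_weak*-super-Delta-point} then yields an element $y^\ast\in S_{X^\ast}$ and a sequence $(y_n^\ast)_{n\geq 1}\subset S_{X^\ast}$ with $y_n^\ast\xrightarrow{w^\ast} y^\ast$ and $\norm{y^\ast-y_n^\ast}\to 2$.

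The key upgrade is to replace weak$^\ast$ convergence by weak convergence. By Sullivan's result (recalled just before Definition of sequential super $\Delta$-point in the excerpt), for a Hahn--Banach smooth space $X$ the relative weak and weak$^\ast$ topologies on $B_{X^\ast}$ agree on $S_{X^\ast}$. Since $(y_n^\ast)$ and $y^\ast$ all lie in $S_{X^\ast}$ and $y_n^\ast\xrightarrow{w^\ast}y^\ast$, we conclude that in fact $y_n^\ast\to y^\ast$ weakly in $X^\ast$. Combined with $\norm{y^\ast-y_n^\ast}\to 2$, this exhibits $y^\ast$ as a sequential super $\Delta$-point of $X^\ast$.

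The final assertion is then immediate from the observation made just before the definition of sequential super $\Delta$-points in the excerpt: the existence of a sequential super $\Delta$-point in a Banach space is incompatible with the Kadets--Klee property, since along the witnessing sequence one has weak convergence to $y^\ast$ in $S_{X^\ast}$ but the norms $\norm{y^\ast-y_n^\ast}$ do not tend to $0$ (in fact they tend to $2$). Hence $X^\ast$ fails to be Kadets--Klee. There is no real obstacle here beyond keeping careful track of which topology the sequence converges in; the substance of the work has all been packaged into the previous theorem and the classical results being quoted.
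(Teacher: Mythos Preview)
Your proof is correct and matches the paper's intended argument exactly: the paper does not write out an explicit proof, but the surrounding text sets up precisely this chain (Smith--Sullivan gives Asplund, hence weak$^*$ sequential compactness via Stegall, then Theorem~\ref{thm:sequential_D-point/weak*-Delta-points_implies_weak*-super-Delta-point} yields a sequential weak$^*$ super $\Delta$-point, and Sullivan's coincidence of weak and weak$^*$ topologies on $S_{X^*}$ upgrades this to a sequential super $\Delta$-point).
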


In particular, note that in the reflexive setting,
we even get the following result.

\begin{cor}\label{cor:reflexive_Kadets-Klee}
  Let $X$ be a reflexive Banach space.
  If $X$ contains a $\mathfrak{D}$-point,
  then $X$ and $X^*$ both contain a sequential super $\Delta$-point.
  In particular, neither $X$ nor $X^*$ are Kadets--Klee.
\end{cor}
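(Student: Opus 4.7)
The plan is to bootstrap twice from Theorem~\ref{thm:sequential_D-point/weak*-Delta-points_implies_weak*-super-Delta-point}, exploiting the fact that reflexivity collapses the weak and weak$^*$ topologies both on $X$ and on $X^*$. First I would verify the hypothesis of that theorem: since $X$ is reflexive, so is $X^*$, and thus $B_{X^*}$ is weakly compact; under reflexivity the weak and weak$^*$ topologies on $X^*$ coincide, so $B_{X^*}$ is weak$^*$ sequentially compact by Eberlein--\v{S}mulian (equivalently, reflexive spaces are Asplund, and one can quote Corollary~\ref{cor:HBS_D-point_weak*-Delta_dual-seq-super-Delta}).

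Starting from a $\mathfrak{D}$-point of $X$, Theorem~\ref{thm:sequential_D-point/weak*-Delta-points_implies_weak*-super-Delta-point} then supplies a point $y^*\in S_{X^*}$ and a sequence $(y_n^*)\subset S_{X^*}$ that converges to $y^*$ weak$^*$ and satisfies $\|y^*-y_n^*\|\to 2$. By reflexivity, weak$^*$ convergence in $X^*$ (with respect to the predual $X$) coincides with weak convergence, so $y^*$ is already a sequential super $\Delta$-point of $X^*$. This handles the $X^*$ half of the statement.

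To obtain the same for $X$ itself, I would iterate. A sequential super $\Delta$-point is a fortiori a super $\Delta$-point, hence a $\Delta$-point, and therefore also a $\mathfrak{D}$-point, so $X^*$ contains a $\mathfrak{D}$-point. Since $X^*$ is reflexive, its bidual ball $B_{X^{**}}=B_X$ is weak$^*$ sequentially compact by the same argument as above. Applying Theorem~\ref{thm:sequential_D-point/weak*-Delta-points_implies_weak*-super-Delta-point} to $X^*$ in place of $X$ then produces a sequential weak$^*$ super $\Delta$-point in $X^{**}=X$; one final appeal to reflexivity, converting weak$^*$ on $X^{**}$ (induced by the predual $X^*$) to weak on $X$, promotes this to a genuine sequential super $\Delta$-point of $X$.

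The Kadets--Klee conclusion is then immediate from the definitions: a witnessing sequence $(x_n)\subset S_X$ converges weakly to $x\in S_X$ while $\|x-x_n\|\to 2\neq 0$, which rules out the Kadets--Klee property for $X$, and the analogous sequence in $S_{X^*}$ does the same for $X^*$. There is no real obstacle here; the only thing demanding care is keeping track at each step of which topology (weak or weak$^*$, on $X$ or on $X^*$) is in play, and reflexivity makes every such distinction vanish.
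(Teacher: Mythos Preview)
Your argument is correct and follows essentially the same bootstrapping scheme as the paper's proof. The only cosmetic difference is that the paper invokes Corollary~\ref{cor:HBS_D-point_weak*-Delta_dual-seq-super-Delta} directly (reflexive spaces are Hahn--Banach smooth, so the conversion from weak$^*$ to weak on $S_{X^*}$ is already built in there), whereas you go back to Theorem~\ref{thm:sequential_D-point/weak*-Delta-points_implies_weak*-super-Delta-point} and then use reflexivity explicitly to identify the weak and weak$^*$ topologies; both routes are equivalent and you even mention the alternative.
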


\begin{proof}
  As reflexive spaces are Hahn--Banach smooth, the existence of a
  $\mathfrak{D}$-point in $X$ implies the existence of a sequential
  super $\Delta$-point in $X^*$ by
  Corollary~\ref{cor:HBS_D-point_weak*-Delta_dual-seq-super-Delta}.
  But as super $\Delta$-points are also $\mathfrak{D}$-points,
  this in turns implies the existence of a sequential super
  $\Delta$-point in $X$.
\end{proof}

\begin{rem}
  In particular, note that this result shows that it is no coincidence
  that the example from Section~\ref{sec:superr-banach-space},
  which is the first example of a reflexive Banach space with
  a $\Delta$-point, in fact provides an example of a reflexive space
  with a super $\Delta$-point and a super $\Delta$-point in its dual.

  Also let us point out,
  in relation to the discussions from
  Subsection~\ref{subsec:Delta-duality_and_asymptotic-geometry},
  that it was proved by Montesinos \cite[Theorem 3]{Montesinos87} that
  a reflexive Banach space $X$ has property $(\alpha)$ if and only if
  it is Kadets--Klee
  (the latter being referred to there as
  ``property $(H)$ of Radon-Riesz").
  So Corollary~\ref{cor:reflexive_Kadets-Klee} can be seen as
  an improved version of Corollary~\ref{cor:auc_refl_no_Dpts}.
\end{rem}

Finally, we get some general results for Lipschitz-free spaces with
a Hahn--Banach smooth predual. For Lipschitz-free spaces the \RNP and the Schur property are
equivalent by \cite[Theorem~4.6]{AGPP}. Hence, as noted above,
since Hahn-Banach smooth spaces are Asplund, we get from
Corollary~\ref{cor:HBS_D-point_weak*-Delta_dual-seq-super-Delta} the following.

\begin{cor}\label{cor:free-spaces_HBS-preduals}
  Let $M$ be a metric space such that $\lipfree{M}$
  is a dual space with a Hahn--Banach smooth predual $Y$.
  Then $Y$ does not contain any $\mathfrak{D}$-point and
  $\mathcal{F}(M)$ does not contain any weak$^*$ $\Delta$-point.
\end{cor}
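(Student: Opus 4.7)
The plan is to argue by contradiction, combining Corollary~\ref{cor:HBS_D-point_weak*-Delta_dual-seq-super-Delta} with the Schur/RNP equivalence for Lipschitz-free spaces from \cite{AGPP}. Suppose that $Y$ contains a $\mathfrak{D}$-point, or that $\lipfree{M}$ contains a weak$^*$ $\Delta$-point. Since $Y$ is Hahn--Banach smooth by hypothesis, Corollary~\ref{cor:HBS_D-point_weak*-Delta_dual-seq-super-Delta} applies and yields that $\lipfree{M} = Y^*$ contains a sequential super $\Delta$-point; in particular, $\lipfree{M}$ fails the Kadets--Klee property.

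Next, I would note that Hahn--Banach smoothness of $Y$ forces $Y$ to be Asplund (by the result of Smith and Sullivan \cite{MR0458125} already recalled in Subsection~\ref{subsubsec:Hahn-Banach-smooth-spaces}), so its dual $\lipfree{M}$ has the Radon--Nikod\'{y}m property. For Lipschitz-free spaces, the RNP is equivalent to the Schur property by \cite[Theorem~4.6]{AGPP}, hence $\lipfree{M}$ has the Schur property.

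Finally, the Schur property trivially implies the Kadets--Klee property: any weakly convergent sequence in $S_{\lipfree{M}}$ converges in norm. This directly contradicts the existence of a sequential super $\Delta$-point produced in the first paragraph, and both conclusions of the statement follow simultaneously. No step involves any obstacle, since everything is essentially a bookkeeping combination of previously established results; the only subtlety is making sure both alternatives ($\mathfrak{D}$-point in $Y$ and weak$^*$ $\Delta$-point in $\lipfree{M}$) are handled at once, which is exactly what Corollary~\ref{cor:HBS_D-point_weak*-Delta_dual-seq-super-Delta} does.
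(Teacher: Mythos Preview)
Your proposal is correct and follows essentially the same approach as the paper: both argue by combining Corollary~\ref{cor:HBS_D-point_weak*-Delta_dual-seq-super-Delta} with the chain Hahn--Banach smooth $\Rightarrow$ Asplund $\Rightarrow$ dual has RNP $\Rightarrow$ (via \cite[Theorem~4.6]{AGPP}) $\lipfree{M}$ has the Schur property $\Rightarrow$ Kadets--Klee, which contradicts the existence of a sequential super $\Delta$-point. The paper states this reasoning in the paragraph preceding the corollary rather than as a formal proof, but the content is identical.
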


Let $\mathcal{M}$ be the metric space defined by Veeorg
that we studied in Section~\ref{sec:separable-dual-space}.
Since $\lipfree{\mathcal{M}}$ contains a Daugavet point, we get from
Corollary~\ref{cor:free-spaces_HBS-preduals}
that no predual of $\lipfree{\mathcal{M}}$ is Hahn--Banach smooth.
Similarly one can show that for the metric space $M$
in \cite[Example~4.2]{ANPP} the molecule $m_{0q}$
is a $\Delta$-point (but not a Daugavet point),
hence no predual of $\lipfree{M}$ is Hahn--Banach smooth.
In next section, we will study in more detail $\Delta$-points
in Lipschitz-free spaces. In particular, we will prove in Theorem~\ref{thm:delta_molecules} 
that for any two distinct points $x$ and $y$ in a metric space $M$, the molecule $m_{x,y}$
is a $\Delta$-point if and only if $x$ and $y$ are discretely connectable in $M$
(see Definition~\ref{defn:discretely_connectable}).
So let us point out right away that together with
this result, Corollary~\ref{cor:free-spaces_HBS-preduals} also provides new information
about the metric spaces $M$ such that the associated Lipschitz-free space
has a Hahn--Banach smooth predual.

\begin{cor}\label{cor:disconnected_metric_HBS_preduals}

  Let $M$ be a metric space such that $\lipfree{M}$
  is a dual space with a Hahn--Banach smooth predual $Y$.
  Then no two distinct points in $M$ are discretely connectable.
    
\end{cor}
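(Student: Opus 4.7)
The plan is to derive Corollary~\ref{cor:disconnected_metric_HBS_preduals} as an immediate combination of Corollary~\ref{cor:free-spaces_HBS-preduals} with the (forthcoming) metric characterization of $\Delta$-molecules in Theorem~\ref{thm:delta_molecules}, plus the trivial observation that in a dual space every $\Delta$-point is a weak$^*$ $\Delta$-point.

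I would argue by contraposition. Suppose there exist two distinct points $x, y \in M$ that are discretely connectable. By Theorem~\ref{thm:delta_molecules}, this discrete connectability is precisely the metric condition that characterizes $\Delta$-molecules, so the molecule $m_{xy} \in S_{\lipfree{M}}$ is a $\Delta$-point. Now invoke the hypothesis that $\lipfree{M}$ is a dual space (with predual $Y$): every weak$^*$-slice of $B_{\lipfree{M}}$ defined by a functional $f \in S_Y \subset S_{\Lip_0(M)}$ is, in particular, an ordinary slice of $B_{\lipfree{M}}$. Hence the $\Delta$-property for $m_{xy}$, which requires $\sup_{\mu \in S} \norm{m_{xy} - \mu} = 2$ for every slice $S \ni m_{xy}$, automatically yields the same equality for every weak$^*$-slice containing $m_{xy}$. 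Therefore $m_{xy}$ is a weak$^*$ $\Delta$-point of $\lipfree{M}$.

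This, however, directly contradicts Corollary~\ref{cor:free-spaces_HBS-preduals}, which, under the assumption that $\lipfree{M}$ admits a Hahn--Banach smooth predual, asserts that $\lipfree{M}$ contains no weak$^*$ $\Delta$-points. We conclude that no two distinct points of $M$ can be discretely connectable.

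There is essentially no obstacle here: the only nontrivial inputs are already in place, namely Corollary~\ref{cor:free-spaces_HBS-preduals} and Theorem~\ref{thm:delta_molecules}. The only thing one must be a little careful about is the direction of the implication between the two notions of $\Delta$-point in a dual space: since there are more slices than weak$^*$-slices, being a $\Delta$-point is a formally stronger condition than being a weak$^*$ $\Delta$-point, which is exactly what the argument uses.
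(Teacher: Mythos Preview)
Your proposal is correct and is exactly the argument the paper intends: the corollary is stated without proof in the paper, but the surrounding text makes clear it is meant to follow by combining Corollary~\ref{cor:free-spaces_HBS-preduals} with Theorem~\ref{thm:delta_molecules}, together with the trivial implication $\Delta$-point $\Rightarrow$ weak$^*$ $\Delta$-point that you spell out.
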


We started this subsection with an example showing that
non-reflexive M-embedded Banach spaces and their duals
can contain $\Delta$-points.
We end with the corresponding question for Daugavet-points.

\begin{quest}
  Does there exist a non-reflexive M-embedded space $X$
  such that $X$ or $X^*$ contains a Daugavet-point?
\end{quest}

\section{Metric characterization of
  \texorpdfstring{$\Delta$}{Delta}-molecules}
\label{sec:metr-char-delta-molecules}

In this last section we will focus on $\Delta$-points in
Lipschitz-free spaces.
Our main goal is to obtain a purely metric characterization of those
molecules $m_{xy}$ that are $\Delta$-points of $\lipfree{M}$ (see
Theorem~\ref{thm:delta_molecules}).
In \cite[Proposition~4.2]{JungRueda}, Jung and Rueda Zoca gave a
sufficient metric condition: that $x$ and $y$ be \emph{connectable},
that is, that they can be joined by Lipschitz paths in $M$ whose
length is arbitrarily close to $d(x,y)$.
In particular, if $x$ and $y$ are connected by a geodesic then
$m_{xy}$ is a $\Delta$-point.
Moreover, the converse is true when $M$ is compact under some
additional assumptions \cite[Theorem~4.13]{JungRueda}.
We will show below that these extra hypotheses are, in fact,
superfluous and the existence of a geodesic characterizes
$\Delta$-molecules for proper $M$
(see Corollary~\ref{cor:cmsp-no-delta-mpq}).
For general $M$, the notion of connectability is unnecessarily strong
and it may be relaxed to allow for discrete paths as follows.

\begin{defn}\label{defn:discretely_connectable}
  Let $x,y\in M$.
  Given $\varepsilon>0$, we say that $x$ and $y$ are
  \emph{$\varepsilon$-discretely connectable (in $M$)}
  if there exists a finite sequence of points
  $p_0,p_1,\ldots,p_n,p_{n+1}$ in $M$,
  where $p_0=x$ and $p_{n+1}=y$, with the following properties:
  \begin{enumerate}
  \item
    $d(p_i, p_{i+1}) < \varepsilon$ for each $i = 0,\ldots,n$, and
  \item
    $\displaystyle \sum_{i=0}^n d(p_i,p_{i+1}) < d(x,y) + \varepsilon$.
  \end{enumerate}
  We say that $x$ and $y$ are \emph{discretely connectable}
  if they are $\varepsilon$-discretely connectable for every $\varepsilon>0$.
\end{defn}

In the case where $M$ is proper, this notion
is still equivalent to the existence of geodesics.

\begin{prop}\label{prop:compact_connectability}
  If $M$ is a proper metric space and $x\neq y\in M$,
  then $x$ and $y$ are discretely connectable if and only if
  they are connected by a geodesic.
\end{prop}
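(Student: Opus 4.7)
The easy direction is immediate: if $\gamma:[0,L]\to M$ is a geodesic with $L=d(x,y)$, $\gamma(0)=x$, $\gamma(L)=y$, then partitioning $[0,L]$ into subintervals of length less than $\varepsilon$ and sampling $\gamma$ at the partition points produces an $\varepsilon$-discrete path realizing total length exactly $L$. So the substantive content is the forward implication: from a sequence of discrete paths produce an actual geodesic, and this is where properness will be used.

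The plan is to build the geodesic by a diagonal extraction. Set $L:=d(x,y)$. For each $n$, pick a $\tfrac{1}{n}$-discrete path $(p_0^{(n)}=x,\,p_1^{(n)},\ldots,p_{k_n+1}^{(n)}=y)$ and record the cumulative lengths $s_i^{(n)}:=\sum_{j<i}d(p_j^{(n)},p_{j+1}^{(n)})$, which lie in $[0,L+\tfrac{1}{n})$ and have consecutive gaps below $\tfrac{1}{n}$. The crucial lemma is the two-sided bound
\[
s_j^{(n)}-s_i^{(n)}-\tfrac{1}{n}\;<\;d(p_i^{(n)},p_j^{(n)})\;\le\;s_j^{(n)}-s_i^{(n)}\qquad(i\le j),
\]
where the upper inequality is just the triangle inequality along the discrete path and the lower one is obtained by writing $L\le d(x,p_i^{(n)})+d(p_i^{(n)},p_j^{(n)})+d(p_j^{(n)},y)$ and estimating the outer terms by $s_i^{(n)}$ and $s_{k_n+1}^{(n)}-s_j^{(n)}$ respectively, using $s_{k_n+1}^{(n)}<L+\tfrac{1}{n}$.

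Next, for every $t\in[0,L]$ I would pick an index $i(t,n)$ with $|s_{i(t,n)}^{(n)}-t|<\tfrac{1}{n}$ (which exists because the gaps are below $\tfrac{1}{n}$), enforcing $i(0,n)=0$ and $i(L,n)=k_n+1$, and set $q^{(n)}(t):=p_{i(t,n)}^{(n)}$. Every $q^{(n)}(t)$ lies in $\overline{B}(x,L+1)$, which is compact because $M$ is proper. Fixing a countable dense set $D\subseteq[0,L]$ containing $0$ and $L$, I extract by the standard diagonal argument a subsequence $(n_k)$ such that $q^{(n_k)}(t)$ converges for every $t\in D$; call the limit $\gamma(t)$. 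The key estimate, applied to the selections $q^{(n)}(s),q^{(n)}(t)$, yields $|d(q^{(n)}(s),q^{(n)}(t))-(t-s)|\le 3/n$, so passing to the limit gives $d(\gamma(s),\gamma(t))=|t-s|$ on $D$.

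The final step is to extend $\gamma$ from $D$ to $[0,L]$: this is automatic because $\gamma$ is an isometry on the dense subset $D$ of a complete target space (proper implies complete), hence it extends uniquely to an isometric embedding $\gamma:[0,L]\to M$ with $\gamma(0)=x$ and $\gamma(L)=y$. The main technical obstacle, and the only place properness really enters, is securing the compactness needed for the diagonal extraction; the two-sided length estimate above is what converts the purely combinatorial ``discretely connectable'' hypothesis into the quantitative isometry needed for the limit to be a true geodesic rather than merely a $1$-Lipschitz curve.
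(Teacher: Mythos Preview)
Your argument is correct and follows essentially the same route as the paper: sample each $\tfrac{1}{n}$-discrete path at cumulative-length parameters, use properness of $M$ to trap everything in the compact ball $\overline{B}(x,L+1)$, pass to limits, and verify that the resulting map $[0,L]\to M$ is an isometry via the same two-sided length estimate. The only cosmetic difference is that the paper takes limits along a free ultrafilter on $\mathbb{N}$ (so the geodesic is defined at every $a\in[0,L]$ in one stroke), whereas you do a diagonal extraction on a countable dense set and then extend by completeness; both are standard and interchangeable here.
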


\begin{proof}
  Denote $I = [0,d(x,y)] \subset \R$
  and let $\mathcal{U}$ be a free ultrafilter on $\mathbb{N}$.

  For every $n \in \mathbb{N}$ we can find
  $p_0^n, p_1^n, \ldots, p^n_{m(n)+1}$ in $M$ with
  $p_0^n = x$ and $p_{m(n)+1}^n = y$ with
  $d(p^n_i,p^n_{i+1}) < 1/n$ for $0\leq i\leq m(n)$ and
  \begin{equation*}
    \sum_{i=0}^{m(n)} d(p^n_i, p^n_{i+1}) < d(x,y) + \frac{1}{n}.
  \end{equation*}
  Fix $n \in \mathbb{N}$.
  For every $a \in I$ there exists $k = k_n(a)$ with $1 \le k \le m(n)+1$
  such that
  \begin{equation*}
    a < \sum_{i=0}^{k-1} d(p^n_i, p^n_{i+1}) \le a + \frac{1}{n}.
  \end{equation*}
  Define $p^n_a = p^n_k$. Then
  \begin{equation*}
    d(x,p^n_a) \le
    \sum_{i=0}^{k-1} d(p^n_i,p^n_{i+1}) \le a + \frac{1}{n}
  \end{equation*}
  and
  \begin{equation*}
    d(p^n_a,y) \le
    \sum_{i=k}^{m(n)} d(p^n_i,p^n_{i+1})
    =
    \sum_{i=0}^{m(n)}  d(p^n_i,p^n_{i+1})
    -
    \sum_{i=0}^{k-1}  d(p^n_i,p^n_{i+1})
    < d(x,y) + \frac{1}{n} - a,
  \end{equation*}
  so that
  \begin{equation*}
    p^n_a
    \in
    B\left(x, a + \frac{1}{n}\right)
    \cap
    B\left(y, d(x,y) - a +\frac{1}{n}\right).
  \end{equation*}
  Note that all points under consideration belong
  to the compact set $B(x,d(x,y)+1)$, so we can
  define $p_a = \lim_{\mathcal{U}} p^n_a$.
  In particular, we must have
  $x = p_0$ and $y = p_{d(x,y)}$.

  Let us check that the map $a\mapsto p_a$ from $I$ to $M$
  is an isometry.
  Let $a, b \in I$ with $a < b$. Given $\delta > 0$ choose
  $A \in \mathcal{U}$ such that $p^n_a \in B(p_a,\delta)$
  for all $n \in A$ and
  $B \in \mathcal{U}$ such that $p^n_b \in B(p_b,\delta)$
  for all $n \in B$.
  Let $C = A \cap B \in \mathcal{U}$.
  Since $\mathcal{U}$ is free we know that $C$ is infinite.
  Therefore we can choose $N \in C$ such that $1/N < \delta$.
  If $k_N(a) = k_N(b)$ then
  $p_a^N=p_b^N\in B(p_a,\delta)\cap B(p_b,\delta)$
  and $d(p_a,p_b)\leq 2\delta$.
  Otherwise $k_N(a) < k_N(b)$ and we have
  \begin{align*}
    d(p_a,p_b)
    &\le
    \delta + d(p_a^N,p_b^N) + \delta
    \le
    2\delta + \sum_{i=k_N(a)}^{k_N(b)-1} d(p_i^N,p_{i+1}^N) \\
    &=
    2\delta
    +
    \sum_{i=0}^{k_N(b)-1} d(p_i^N,p_{i+1}^N)
    -
    \sum_{i=0}^{k_N(a)-1} d(p_i^N,p_{i+1}^N)
    \\
    &<
    2\delta + b + \frac{1}N - a
    <
    b - a + 3\delta.
  \end{align*}
  Since $\delta > 0$ was arbitrary we get
  $d(p_a,p_b) \le |b-a|$ for all $a,b \in I$.
  Now, we also have
  \begin{equation*}
    d(x,y) \le d(x,p_a) + d(p_a,p_b) + d(p_b,y)
    \le (a-0) + (b-a) + (d(x,y) - b)
    = d(x,y)
  \end{equation*}
  so all inequalities must be equalities and
  $d(p_a,p_b) = |b-a|$ for all $a,b \in I$.
\end{proof}

The argument used in \cite[Proposition~4.2]{JungRueda} can be discretized to show that $m_{xy}$ is a $\Delta$-point whenever $x$ and $y$ are discretely connectable. With a slight variation of that argument, we get the following, more general result.

\begin{prop}\label{prop:discr_conn}
Let $\mu\in S_{\lipfree{M}}$. Suppose that, for every $\eta>0$, $\mu$ can be expressed as a series of molecules
\begin{equation}
\label{eq:series_of_molecules}
\mu = \sum_{k=1}^\infty a_k m_{x_ky_k} \quad \text{with} \quad \sum_{k=1}^\infty\abs{a_k}<1+\eta
\end{equation}
such that each pair $(x_k,y_k)$ is discretely connectable in $M$. Then $\mu$ is a $\Delta$-point in $\lipfree{M}$.
\end{prop}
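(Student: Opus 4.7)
The goal is, for each slice $S(f,\varepsilon_0)\subseteq B_{\lipfree{M}}$ containing $\mu$ and each $\varepsilon>0$, to produce a molecule $m_{pq}\in S(f,\varepsilon_0)$ with $\|\mu-m_{pq}\|>2-\varepsilon$; this will suffice for $\mu$ to be a $\Delta$-point. Writing $\zeta:=f(\mu)-(1-\varepsilon_0)>0$, my plan is to run a two-layer pigeonhole (first over the prescribed decomposition of $\mu$, then over a discrete path) in order to locate a good short molecule $m_{pq}$, and then to build a $1$-Lipschitz distinguishing function out of a norming functional of $\mu$ by applying a local cone correction that drives its value on $m_{pq}$ down to $-1$.

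For the first half, apply the hypothesis with $\eta>0$ small (to be fixed later) to write $\mu=\sum_k a_k m_{x_k y_k}$ with $\sum_k|a_k|<1+\eta$ and each $(x_k,y_k)$ discretely connectable. After flipping orientations I may assume $a_k\ge 0$. Pick $N$ so that $\|\mu-\mu_N\|<\eta$ for the truncation $\mu_N:=\sum_{k\le N} a_k m_{x_k y_k}$. Since the maximum exceeds the weighted average, some $k_0\le N$ satisfies
\[
f(m_{x_{k_0} y_{k_0}})\;\ge\;\frac{f(\mu_N)}{\sum_{k\le N}a_k}\;>\;\frac{f(\mu)-\eta}{1+\eta}\;>\;1-\varepsilon_0+\tfrac{\zeta}{2}
\]
once $\eta$ is small enough in terms of $\zeta$. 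Set $d_0:=d(x_{k_0},y_{k_0})$ and invoke the discrete connectability of $(x_{k_0},y_{k_0})$ with a parameter $\delta>0$: pick a discrete path $p_0=x_{k_0},\ldots,p_{n+1}=y_{k_0}$ with $s_i:=d(p_i,p_{i+1})<\delta$ and $\sum_i s_i<d_0+\delta$. A telescoping identity gives $\sum_i s_i f(m_{p_i p_{i+1}})=f(x_{k_0})-f(y_{k_0})>(1-\varepsilon_0+\tfrac{\zeta}{2})d_0$, and a second pigeonhole produces an index $i_0$ with $f(m_{p_{i_0}p_{i_0+1}})>1-\varepsilon_0+\tfrac{\zeta}{4}$ as soon as $\delta<\zeta d_0/4$. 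Set $p:=p_{i_0}$, $q:=p_{i_0+1}$; then $m_{pq}\in S(f,\varepsilon_0)$ and $d(p,q)<\delta$ can be made arbitrarily small.

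For the second half, use weak$^*$-compactness of $B_{\Lip_0(M)}$ to choose a norming functional $g_0\in B_{\Lip_0(M)}$ for $\mu$ (so $g_0(\mu)=1$), and set
\[
g(z):=\min\bigl(g_0(z),\;g_0(p)+d(z,p)-2d(p,q)\bigr),
\]
which is $1$-Lipschitz as a minimum of two $1$-Lipschitz functions. Using $g_0(q)\ge g_0(p)-d(p,q)$, one computes $g(p)=g_0(p)-2d(p,q)$ and $g(q)=g_0(p)-d(p,q)$, whence $g(m_{pq})=-1$ irrespective of $g_0$. Moreover $0\le g_0(z)-g(z)\le 2d(p,q)$ pointwise, which forces $|g_0(m_{xy})-g(m_{xy})|\le 2d(p,q)/d(x,y)$ on every molecule; setting $M_N:=\sum_{k\le N}a_k/d(x_k,y_k)<\infty$ this yields $|g(\mu_N)-g_0(\mu_N)|\le 2d(p,q)M_N$ and, combined with $\|\mu-\mu_N\|<\eta$, the bound $g(\mu)\ge 1-2\eta-2d(p,q)M_N$. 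Replacing $g$ by $g-g(0)$ if necessary (which does not affect evaluations on elements of $\lipfree{M}$) lands us in $\Lip_0(M)$. Choosing $\eta<\min(\zeta/4,\varepsilon/8)$ and $\delta<\min(\zeta d_0/4,\varepsilon/(8M_N))$ one concludes
\[
\|\mu-m_{pq}\|\;\ge\;g(\mu)-g(m_{pq})\;>\;(1-\varepsilon/2)+1\;=\;2-\varepsilon/2,
\]
which completes the proof.

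The main obstacle is producing $g$: one cannot simply perturb $f$, since $f$ is controlled on $\mu$ only up to $\varepsilon_0$, and a naive local bump cannot flip the value on $m_{pq}$ from $\approx+1$ to $-1$ while staying $1$-Lipschitz. Starting instead from a norming functional of $\mu$ (where there is a full unit of slack) and capping it with an inward-pointing slope-one cone based at $p$ of depth exactly $2d(p,q)$ bypasses the issue; the finiteness of $M_N$, which is what the \emph{finite} truncation $\mu_N$ buys us, is what keeps this local perturbation negligible on $\mu$.
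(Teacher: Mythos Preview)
Your proof is correct. The first half---the double pigeonhole producing a molecule $m_{pq}$ in the slice with $d(p,q)$ arbitrarily small---is essentially identical to the paper's argument. The difference lies in the second half. The paper, having found such $m_{uv}$ in $S$, invokes \cite[Theorem~2.6]{JungRueda} as a black box to conclude that $S$ contains elements at distance arbitrarily close to $2$ from $\mu$. You instead give a direct, self-contained argument: pick a norming functional $g_0$ for $\mu$, cap it with a depth-$2d(p,q)$ cone based at $p$, and use the finite truncation $\mu_N$ to control the damage done to $g_0(\mu)$ via the quantity $M_N=\sum_{k\le N}a_k/d(x_k,y_k)$. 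The order of choices ($\eta$, then the representation, then $N$ and $M_N$, then $\delta$) is consistent, and the pointwise bound $0\le g_0-g\le 2d(p,q)$ is exactly what makes the $M_N$-estimate go through. Your route is more elementary in that it avoids the external citation and makes the mechanism transparent; the paper's route is shorter and shows that the phenomenon factors through the general fact that ``short'' molecules in a slice already force diameter~$2$ there.
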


The hypothesis clearly holds if $\mu=m_{xy}$ where $x$ and $y$ are discretely connectable. More generally, it also holds if every pair of points in $\supp(\mu)\cup\set{0}$ is discretely connectable in $M$, as every $\mu\in S_{\lipfree{M}}$ admits an expression of the form \eqref{eq:series_of_molecules} (see e.g. \cite[Lemma~2.1]{AP20}).

\begin{proof}[Proof of Proposition~\ref{prop:discr_conn}]
  Let $S = \set{\nu \in B_{\lipfree{M}} : f(\nu) > 1-\alpha}$
  be a slice containing $\mu$,
  for some $f\in S_{\Lip_0(M)}$ and $\alpha > 0$.
  Fix $\eta > 0$ such that
  $f(\mu) > (1 - \alpha)(1 + \eta)$,
  and choose a representation of $\mu$ of the form \eqref{eq:series_of_molecules} where every pair $(x_k,y_k)$ is discretely connectable. We may assume that $a_k\geq 0$ for all $k$ by swapping $x_k$ with $y_k$ if needed.
  Then, by convexity, we must have $f(m_{x_ky_k}) > 1 - \alpha$
  for some $k$, i.e. there are $x = x_k,y = y_k$ in $M$ that are
  discretely connectable and such that $m_{xy}\in S$.
  
  Now fix $\delta>0$ such that $f(m_{xy})>(1-\alpha)(1+\delta)$,
  and let $\varepsilon < \delta \cdot d(x,y)$ be arbitrary.
  Choose a sequence of points $p_1,\ldots,p_n\in M$
  as in Definition~\ref{defn:discretely_connectable},
  and denote $p_0 = x$, $p_{n+1} = y$.
  Then we have
  \begin{align*}
    \max \set{f(m_{p_i,p_{i+1}}) : i=0,\ldots,n}
    &=
    \max\set{\frac{f(p_i) - f(p_{i+1})}{d(p_i,p_{i+1})} :
      i=0,\ldots,n} \\
    &\geq
    \frac{\sum\limits_{i=0}^n (f(p_i) - f(p_{i+1}))}
    {\sum\limits_{i=0}^n d(p_i,p_{i+1})} \\
    &>
    \frac{f(x)-f(y)}{d(x,y)+\varepsilon} \\
    &>
    \frac{f(m_{xy})}{1+\delta} > 1-\alpha .
  \end{align*}
  Therefore we may choose $u=p_k,v=p_{k+1}$ with $d(u,v)<\varepsilon$
  and $f(m_{uv}) > 1 -\alpha$, i.e. $m_{uv}\in S$.
  By \cite[Theorem~2.6]{JungRueda},
  it follows that $S$ contains elements whose distance to $\mu$
  is arbitrarily close to $2$, hence $\mu$ is a $\Delta$-point.
\end{proof}

Discrete connectability does, in fact, characterize $\Delta$-molecules in Lipschitz-free spaces. In order to prove this, we will now construct a family of alternative metrics on $M$ that provide
information about how ``well connected'' (in the sense of Definition
\ref{defn:discretely_connectable}) a given pair of points is, by reducing their distance whenever there is a partial discrete path between them. We define them precisely as the shortest possible distance when giving a preference to discrete paths with sufficiently small step.

Fix $\alpha \in (0,1)$.
For any $\varepsilon > 0$ and $x,y \in M$ we write
\begin{equation*}
  w_{\alpha,\varepsilon}(x,y)
  :=
  \begin{cases}
    d(x,y) & \text{, if $d(x,y)\geq\varepsilon$} \\
    (1-\alpha)d(x,y) & \text{, if $d(x,y)<\varepsilon$}
  \end{cases}
\end{equation*}
and
\begin{equation*}
  b_{\alpha,\varepsilon}(x,y)
  :=
  \inf\set{\sum_{i=0}^n w_{\alpha,\varepsilon}(p_i,p_{i+1}) \,:
    \, p_0,p_1,\ldots,p_{n+1}\in M, p_0=x, p_{n+1}=y} .
\end{equation*}
Note that $w_{\alpha,\varepsilon}$ and $b_{\alpha,\varepsilon}$
increase as $\varepsilon$ decreases, so we can also define
  \begin{equation*}
    b_\alpha(x,y)
    :=
    \sup_{\varepsilon>0} b_{\alpha,\varepsilon}(x,y)
    =
    \lim_{\varepsilon\to 0} b_{\alpha,\varepsilon}(x,y) .
  \end{equation*}

\begin{lem}\label{lem:properties_of_b}
  Fix $\alpha\in (0,1)$ and $\varepsilon>0$.
  \begin{enumerate}
  \item\label{item:prop_of_b_1}
    $b_\alpha$ and $b_{\alpha,\varepsilon}$ are bi-Lipschitz equivalent metrics on $M$;
  \item\label{item:prop_of_b_2}
    For any $x,y\in M$ we have
    \begin{equation*}
      (1-\alpha) d(x,y) \leq b_{\alpha,\varepsilon}(x,y)
      \leq b_\alpha(x,y) \leq d(x,y);
    \end{equation*}
  \item\label{item:prop_of_b_3}
    If $x,y\in M$ are not $\varepsilon$-discretely connectable,
    then
    \begin{equation*}
      b_{\alpha,\varepsilon}(x,y) \geq (1-\alpha)d(x,y) + \varepsilon\cdot\min\set{\alpha,1-\alpha} ;
    \end{equation*}
  \item\label{item:prop_of_b_4}
    Two points $x,y\in M$ are discretely connectable if and only if
    \begin{equation*}
      b_\alpha(x,y) = (1-\alpha)d(x,y).
    \end{equation*}
  \end{enumerate}
\end{lem}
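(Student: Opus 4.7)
My plan is to handle items \ref{item:prop_of_b_2}, \ref{item:prop_of_b_1}, \ref{item:prop_of_b_3}, \ref{item:prop_of_b_4} in that order, since the chain of inequalities in \ref{item:prop_of_b_2} underpins everything else and \ref{item:prop_of_b_3} does the real work for \ref{item:prop_of_b_4}.

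For \ref{item:prop_of_b_2}, the upper bound $b_\alpha(x,y)\leq d(x,y)$ comes from the trivial one-step path $p_0=x$, $p_1=y$, which gives $b_{\alpha,\varepsilon}(x,y)\leq w_{\alpha,\varepsilon}(x,y)\leq d(x,y)$ for every $\varepsilon>0$. The middle inequality is immediate from the definition of $b_\alpha$ as a supremum. For the lower bound, any path $p_0,\ldots,p_{n+1}$ from $x$ to $y$ satisfies $w_{\alpha,\varepsilon}(p_i,p_{i+1})\geq (1-\alpha)d(p_i,p_{i+1})$ termwise, so the triangle inequality for $d$ yields $\sum w_{\alpha,\varepsilon}(p_i,p_{i+1})\geq (1-\alpha)d(x,y)$, and then take infimum.

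For \ref{item:prop_of_b_1}, symmetry of $b_{\alpha,\varepsilon}$ is clear, and the triangle inequality follows by concatenating a path from $x$ to $y$ with a path from $y$ to $z$. Non-degeneracy and the full bi-Lipschitz equivalence with $d$ are then immediate from \ref{item:prop_of_b_2}, since $(1-\alpha)d\leq b_{\alpha,\varepsilon}\leq b_\alpha\leq d$. In particular, $b_\alpha$ is also a metric (supremum of a bounded family of metrics dominated by $d$).

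The main technical point is \ref{item:prop_of_b_3}. Given any path $p_0,\ldots,p_{n+1}$ from $x$ to $y$, split the index set $\{0,\ldots,n\}$ into $I=\{i : d(p_i,p_{i+1})<\varepsilon\}$ and $J=\{i : d(p_i,p_{i+1})\geq\varepsilon\}$. Then
\begin{equation*}
\sum_{i=0}^n w_{\alpha,\varepsilon}(p_i,p_{i+1})=(1-\alpha)\sum_{i\in I}d(p_i,p_{i+1})+\sum_{i\in J}d(p_i,p_{i+1}).
\end{equation*}
If $J\neq\varnothing$, rewriting the second sum as $(1-\alpha)\sum_J d+\alpha\sum_J d$ and using $\sum_J d(p_i,p_{i+1})\geq\varepsilon$ gives a total bound of at least $(1-\alpha)d(x,y)+\alpha\varepsilon$. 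If $J=\varnothing$, then all steps are shorter than $\varepsilon$, so failure of $\varepsilon$-discrete connectability forces $\sum_i d(p_i,p_{i+1})\geq d(x,y)+\varepsilon$, giving a total bound of $(1-\alpha)(d(x,y)+\varepsilon)$. Taking the minimum of the two cases and the infimum over paths yields the claim. I expect this case split to be the main (but not difficult) obstacle.

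Finally, \ref{item:prop_of_b_4} follows quickly. If $x,y$ are discretely connectable, then for any $\varepsilon_0>0$ and any $\varepsilon'\in(0,\varepsilon_0)$ we can produce a path witnessing $b_{\alpha,\varepsilon_0}(x,y)\leq(1-\alpha)(d(x,y)+\varepsilon')$ (its steps are below $\varepsilon'<\varepsilon_0$, so all weights are scaled by $1-\alpha$); letting $\varepsilon'\to 0$ and combining with \ref{item:prop_of_b_2} gives $b_{\alpha,\varepsilon_0}(x,y)=(1-\alpha)d(x,y)$ for every $\varepsilon_0$, hence $b_\alpha(x,y)=(1-\alpha)d(x,y)$. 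Conversely, if $x,y$ fail to be $\varepsilon$-discretely connectable for some $\varepsilon>0$, then by \ref{item:prop_of_b_3} we have $b_\alpha(x,y)\geq b_{\alpha,\varepsilon}(x,y)\geq(1-\alpha)d(x,y)+\varepsilon\min\set{\alpha,1-\alpha}>(1-\alpha)d(x,y)$, contradicting the equality.
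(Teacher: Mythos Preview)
Your proposal is correct and follows essentially the same route as the paper: the same termwise bound for \ref{item:prop_of_b_2}, the same concatenation argument for the triangle inequality in \ref{item:prop_of_b_1}, the same two-case estimate (some step $\geq\varepsilon$ versus all steps $<\varepsilon$) for \ref{item:prop_of_b_3}, and the same use of \ref{item:prop_of_b_3} plus an explicit short-step path for \ref{item:prop_of_b_4}. The only cosmetic difference is that in \ref{item:prop_of_b_3} you split on whether $J=\varnothing$ while the paper splits on which clause of Definition~\ref{defn:discretely_connectable} fails; these are equivalent, and your observation that $b_\alpha$ is a metric as a supremum of metrics is a clean alternative to the paper's passage to the limit.
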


\begin{proof}
  For any $x,y\in M$ we have
  $b_{\alpha,\varepsilon}(x,y) \leq w_{\alpha,\varepsilon}(x,y) \leq d(x,y)$.
  Notice also that for any finite sequence
  $p_0 = x, p_1, \ldots, p_n, p_{n+1} = y$ in $M$
  we have
  \begin{equation*}
    \sum_{i=0}^n w_{\alpha,\varepsilon}(p_i,p_{i+1})
    \geq
    \sum_{i=0}^n (1-\alpha)d(p_i,p_{i+1})
    \geq
    (1 - \alpha)d(x,y) .
  \end{equation*}
  and hence $b_{\alpha,\varepsilon}(x,y)\geq (1-\alpha)d(x,y)$.
  This proves \ref{item:prop_of_b_2} for $b_{\alpha,\varepsilon}$
  and thus also for $b_\alpha$ by taking limits.

  It is clear that $w_{\alpha,\varepsilon}$ and
  $b_{\alpha,\varepsilon}$ are symmetric.
  For any $x,y,z \in M$ and $\delta > 0$,
  we may find two finite sequences
  $p_1, \ldots, p_n$ and $p'_1, \ldots, p'_m$
  of points in $M$ such that
  \begin{align*}
    w_{\alpha,\varepsilon}(x,p_1) + w_{\alpha,\varepsilon}(p_1,p_2)
    + \cdots + w_{\alpha,\varepsilon}(p_n,z)
    &<
    b_{\alpha,\varepsilon}(x,z)+\delta \\
    w_{\alpha,\varepsilon}(z,p'_1) + w_{\alpha,\varepsilon}(p'_1,p'_2)
    + \cdots + w_{\alpha,\varepsilon}(p'_m,y)
    &<
    b_{\alpha,\varepsilon}(z,y)+\delta
  \end{align*}
  and therefore
  \begin{align*}
    b_{\alpha,\varepsilon}(x,y)
    &\leq
    w_{\alpha,\varepsilon}(x,p_1) + \cdots + w_{\alpha,\varepsilon}(p_n,z)
    +
    w_{\alpha,\varepsilon}(z,p'_1) + \cdots + w_{\alpha,\varepsilon}(p'_m,y) \\
    &<
    b_{\alpha,\varepsilon}(x,z) + b_{\alpha,\varepsilon}(z,y) + 2\delta .
  \end{align*}
  Letting $\delta \to 0$ yields the triangle inequality
  for $b_{\alpha,\varepsilon}$.
  Together with \ref{item:prop_of_b_2} this shows that
  $b_{\alpha,\varepsilon}$ is an equivalent metric on $M$,
  and letting $\varepsilon \to 0$ we get \ref{item:prop_of_b_1}.

  For part \ref{item:prop_of_b_3},
  the assumption is that for any finite sequence
  $p_0 = x, p_1, \ldots, p_{n+1} = y$ in $M$
  at least one of the two following statements holds:
  \begin{enumerate}[label={(\alph*)}]
  \item\label{item:prop_of_b_3_1}
    $\sum_{i=0}^n d(p_i,p_{i+1})
    \geq
    d(x,y)+\varepsilon$.
  \item\label{item:prop_of_b_3_2}
    $d(p_k,p_{k+1}) \geq \varepsilon$
    for some $k \in \set{0, \ldots, n}$.
  \end{enumerate}
  In case \ref{item:prop_of_b_3_1}, we have
  \begin{equation*}
    \sum_{i=0}^n w_{\alpha,\varepsilon}(p_i,p_{i+1})
    \geq
    \sum_{i=0}^n (1 - \alpha) d(p_i,p_{i+1})
    \geq
    (1-\alpha) (d(x,y)+\varepsilon)
    \geq
    (1-\alpha) d(x,y) + (1-\alpha)\varepsilon .
  \end{equation*}
  In case \ref{item:prop_of_b_3_2}, we have
  \begin{align*}
    \sum_{i=0}^n w_{\alpha,\varepsilon}(p_i,p_{i+1})
    &\geq
    (1 - \alpha)
    \sum_{i=0}^{k-1} d(p_i,p_{i+1})
    +
    d(p_k,p_{k+1})
    +
    (1-\alpha) \sum_{i=k+1}^n d(p_i,p_{i+1}) \\
    &=
    (1-\alpha)
    \sum_{i=0}^n d(p_i,p_{i+1})
    +
    \alpha d(p_k,p_{k+1}) \\
    &\geq
    (1-\alpha) d(x,y) + \alpha\varepsilon .
  \end{align*}
  Taking the infimum over all choices of $p_i$ yields
  \ref{item:prop_of_b_3}.

  Finally, one of the implications in \ref{item:prop_of_b_4}
  is given by \ref{item:prop_of_b_3}.
  For the converse,
  assume that $x$ and $y$ are discretely connectable.
  Let $\varepsilon' > 0$ and $\delta\in(0,\varepsilon')$.
  Then one may find finitely many points
  $p_0 = x, p_1, \ldots, p_n, p_{n+1} = y$ in $M$
  such that $d(p_i, p_{i+1}) < \delta$ and
  $\sum_{i=0}^n d(p_i,p_{i+1}) < d(x,y) + \delta$.
  Thus
  \begin{equation*}
    b_{\alpha,\varepsilon'} (x,y)
    \leq
    \sum_{i=0}^n w_{\alpha,\varepsilon'} (p_i, p_{i+1})
    =
    (1-\alpha) \sum_{i=0}^n d(p_i,p_{i+1})
    <
    (1-\alpha)(d(x,y) + \delta) .
  \end{equation*}
  Letting $\delta \to 0$ followed by $\varepsilon' \to 0$
  yields $b_\alpha(x,y)\leq (1-\alpha)d(x,y)$, and
  an appeal to \ref{item:prop_of_b_2} ends the proof.
\end{proof}

Note that Lemma~\ref{lem:properties_of_b}~\ref{item:prop_of_b_2}
implies that $\lipfree{M,d}$ and $\lipfree{M,b_\alpha}$
are linearly isomorphic completions of $\mathrm{span}\,\delta(M)$,
and in particular $\|\cdot\|_{\lipfree{M,b_\alpha}}$
is an equivalent norm on $\lipfree{M}$.
The same holds for $b_{\alpha,\varepsilon}$ in place of $b_\alpha$.
We have
\begin{equation*}
  (1-\alpha)\|\mu\|_{\mathcal{F}(M)}
  \le
  \|\mu\|_{\mathcal{F}(M,b_{\alpha,\varepsilon})}
  \le
  \|\mu\|_{\mathcal{F}(M,b_\alpha)}
  \le
  \|\mu\|_{\mathcal{F}(M)}
\end{equation*}
for $\mu\in\lipfree{M}$, and similarly
\begin{equation*}
  (1-\alpha)B_{\Lip_0(M,d)}
  \subseteq
  B_{\Lip_0(M,b_{\alpha,\varepsilon})}
  \subseteq
  B_{\Lip_0(M,b_\alpha)}
  \subseteq
  B_{\Lip_0(M,d)} .
\end{equation*}
We also have the following.

\begin{lem}\label{lem:norm_b_limit}
  For any $\mu\in\lipfree{M}$ and $\alpha\in (0,1)$ we have
  \begin{equation*}
  \lim_{\varepsilon\to 0}\norm{\mu}_{\lipfree{M,b_{\alpha,\varepsilon}}} = \norm{\mu}_{\lipfree{M,b_\alpha}} .
  \end{equation*}
\end{lem}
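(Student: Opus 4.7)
The plan is to reduce the problem to the case of finitely supported $\mu$, where the convergence $b_{\alpha,\varepsilon}(x,y)\to b_\alpha(x,y)$ on individual pairs can be turned into convergence of Lipschitz constants, and then pass to arbitrary $\mu$ by density. First observe that since $b_{\alpha,\varepsilon}\leq b_\alpha$ pointwise, we have the inclusion $B_{\Lip_0(M,b_{\alpha,\varepsilon})}\subseteq B_{\Lip_0(M,b_\alpha)}$, and therefore $\norm{\mu}_{\lipfree{M,b_{\alpha,\varepsilon}}}\leq\norm{\mu}_{\lipfree{M,b_\alpha}}$. Moreover, $b_{\alpha,\varepsilon}$ is monotone increasing as $\varepsilon$ decreases, so $\varepsilon\mapsto\norm{\mu}_{\lipfree{M,b_{\alpha,\varepsilon}}}$ is itself monotone increasing as $\varepsilon\to 0$. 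In particular, the limit exists and is bounded above by $\norm{\mu}_{\lipfree{M,b_\alpha}}$, so it remains to prove the reverse inequality $\liminf_{\varepsilon\to 0}\norm{\mu}_{\lipfree{M,b_{\alpha,\varepsilon}}}\geq \norm{\mu}_{\lipfree{M,b_\alpha}}$.

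Assume first that $\mu=\sum_{i=1}^n a_i\delta(x_i)$ is a non-zero finitely supported element, and set $K:=\set{0,x_1,\ldots,x_n}$. Given $\eta\in(0,\norm{\mu}_{\lipfree{M,b_\alpha}})$, choose $f\in\Lip_0(M)$ with $\sup_{x\neq y}\abs{f(x)-f(y)}/b_\alpha(x,y)\leq 1$ and $\duality{\mu,f}>\norm{\mu}_{\lipfree{M,b_\alpha}}-\eta$. Since $K$ is finite and $b_{\alpha,\varepsilon}\to b_\alpha$ pointwise, the quantity
\begin{equation*}
L_\varepsilon:=\max_{x\neq y\in K}\frac{\abs{f(x)-f(y)}}{b_{\alpha,\varepsilon}(x,y)}
\end{equation*}
is the maximum of finitely many convergent expressions, and so converges to $\max_{x\neq y\in K}\abs{f(x)-f(y)}/b_\alpha(x,y)\leq 1$ as $\varepsilon\to 0$. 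For $\varepsilon$ small enough, $L_\varepsilon>0$, so the McShane extension theorem applied in the metric space $(M,b_{\alpha,\varepsilon})$ produces $\tilde{f}_\varepsilon\in\Lip_0(M,b_{\alpha,\varepsilon})$ that agrees with $f$ on $K$ and has Lipschitz constant exactly $L_\varepsilon$. Since $\supp(\mu)\cup\set{0}\subseteq K$, we then get
\begin{equation*}
\norm{\mu}_{\lipfree{M,b_{\alpha,\varepsilon}}}\geq L_\varepsilon^{-1}\duality{\mu,\tilde{f}_\varepsilon}=L_\varepsilon^{-1}\duality{\mu,f}\longrightarrow\duality{\mu,f}>\norm{\mu}_{\lipfree{M,b_\alpha}}-\eta
\end{equation*}
as $\varepsilon\to 0$, and letting $\eta\to 0$ finishes the finitely supported case.

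The general case follows by a density argument using the uniform norm equivalence already recalled before the lemma. Given $\mu\in\lipfree{M}$ and $\eta>0$, pick a finitely supported $\nu$ with $\norm{\mu-\nu}_{\lipfree{M}}<\eta$; then both $\norm{\mu-\nu}_{\lipfree{M,b_\alpha}}$ and $\norm{\mu-\nu}_{\lipfree{M,b_{\alpha,\varepsilon}}}$ are at most $\eta$, and the finitely supported case yields $\abs{\norm{\nu}_{\lipfree{M,b_{\alpha,\varepsilon}}}-\norm{\nu}_{\lipfree{M,b_\alpha}}}<\eta$ for $\varepsilon$ small enough. A standard three-$\eta$ argument then gives $\abs{\norm{\mu}_{\lipfree{M,b_{\alpha,\varepsilon}}}-\norm{\mu}_{\lipfree{M,b_\alpha}}}<3\eta$ for small $\varepsilon$, as required. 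The only delicate step is the convergence $L_\varepsilon\to\max_{x\neq y\in K}\abs{f(x)-f(y)}/b_\alpha(x,y)$, but since $K$ is finite this is just the continuity of the maximum of finitely many convergent sequences; everything else is routine.
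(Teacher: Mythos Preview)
Your proof is correct, with one small slip: the displayed convergence $L_\varepsilon^{-1}\duality{\mu,f}\to\duality{\mu,f}$ is not literally true, since you only know $L_\varepsilon\to L:=\max_{x\neq y\in K}\abs{f(x)-f(y)}/b_\alpha(x,y)\leq 1$, not $L_\varepsilon\to 1$. But this does not affect the argument, because $L\leq 1$ and $\duality{\mu,f}>0$ give $\liminf_{\varepsilon\to 0}L_\varepsilon^{-1}\duality{\mu,f}=L^{-1}\duality{\mu,f}\geq\duality{\mu,f}$, which is all you need. You might simply replace the arrow by this one-line observation.

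Your approach differs from the paper's in the finitely supported step. The paper works on the free-space side: it uses the optimal representation of a finitely supported element as a convex combination of molecules (Weaver, Proposition~3.16) in $\lipfree{M,b_{\alpha,\varepsilon}}$, and then compares the coefficients under the change of metric $b_{\alpha,\varepsilon}\to b_\alpha$. You instead work on the dual side: take a near-norming $f\in B_{\Lip_0(M,b_\alpha)}$, control its $b_{\alpha,\varepsilon}$-Lipschitz constant on the finite set $K$, and extend by McShane. Both routes exploit exactly the same finiteness (pointwise convergence $b_{\alpha,\varepsilon}\to b_\alpha$ becomes uniform on pairs from $K$), and the density reduction to the general case is identical. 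Your version is arguably more elementary, since McShane extension is more basic than the optimal-representation result the paper invokes; the paper's version is perhaps more in keeping with the primal, molecule-based techniques used elsewhere in that section.
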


\begin{proof}
Suppose first that $\mu$ has finite support and put $S=\supp(\mu)\cup\set{0}$. Let $\eta>0$. Then, since $S$ is finite, we can find $\varepsilon_0>0$ such that $b_\alpha(x,y)\leq (1+\eta)b_{\alpha,\varepsilon}(x,y)$ for all $x,y\in S$ and all $\varepsilon\in (0,\varepsilon_0)$. For any such $\varepsilon$ we can, by e.g. \cite[Proposition~3.16]{Weaver2}, write $\mu$ as a finite sum of molecules in $\lipfree{M,b_{\alpha,\varepsilon}}$ in the form
\begin{equation*}
\mu = \sum_{k=1}^n a_k\frac{\delta(x_k)-\delta(y_k)}{b_{\alpha,\varepsilon}(x_k,y_k)}
\end{equation*}
where $x_k\neq y_k\in S$ and $\sum_k\abs{a_k}=\norm{\mu}_{\lipfree{M,b_{\alpha,\varepsilon}}}$. This implies
\begin{align*}
\norm{\mu}_{\lipfree{M,b_{\alpha,\varepsilon}}} \leq \norm{\mu}_{\lipfree{M,b_\alpha}} &= \norm{\sum_{k=1}^n a_k \frac{b_\alpha(x_k,y_k)}{b_{\alpha,\varepsilon}(x_k,y_k)} \frac{\delta(x_k)-\delta(y_k)}{b_\alpha(x_k,y_k)}}_{\lipfree{M,b_\alpha}} \\
&\leq \sum_{k=1}^n \abs{a_k} \frac{b_\alpha(x_k,y_k)}{b_{\alpha,\varepsilon}(x_k,y_k)} \leq (1+\eta)\norm{\mu}_{\lipfree{M,b_{\alpha,\varepsilon}}}
\end{align*}
for $\varepsilon<\varepsilon_0$. Thus the lemma holds for this $\mu$.

Now let $\mu\in\lipfree{M}$ be arbitrary, and take $\eta>0$. Find $\nu\in\lipfree{M}$ with finite support and such that $\norm{\mu-\nu}_{\lipfree{M,b_\alpha}}\leq\eta$. Then $\norm{\nu}_{\lipfree{M,b_\alpha}}\leq\norm{\nu}_{\lipfree{M,b_{\alpha,\varepsilon}}}+\eta$ when $\varepsilon$ is small enough, by the previous paragraph. For such $\varepsilon$ we have
\begin{align*}
\norm{\mu}_{\lipfree{M,b_{\alpha,\varepsilon}}} \leq \norm{\mu}_{\lipfree{M,b_\alpha}} &\leq \norm{\nu}_{\lipfree{M,b_\alpha}} + \eta \\
&\leq \norm{\nu}_{\lipfree{M,b_{\alpha,\varepsilon}}}+2\eta \\
&\leq \norm{\mu}_{\lipfree{M,b_{\alpha,\varepsilon}}} + \norm{\mu-\nu}_{\lipfree{M,b_{\alpha,\varepsilon}}} + 2\eta \\
&\leq \norm{\mu}_{\lipfree{M,b_{\alpha,\varepsilon}}} + \norm{\mu-\nu}_{\lipfree{M,b_\alpha}} + 2\eta \\
&\leq \norm{\mu}_{\lipfree{M,b_{\alpha,\varepsilon}}} + 3\eta .
\end{align*}
So the lemma also holds for this $\mu$.
\end{proof}

The relevance of the next lemma lies in the fact that condition~\ref{item:norm_b_1} characterizes $\Delta$-points when $\mu$ is a molecule (see \cite[Theorem~4.7]{JungRueda}) or, more generally, a finitely supported element of $S_{\lipfree{M}}$ (see \cite[Theorem~4.4]{VeeorgStudia}).

\begin{lem}\label{lem:norm_b}
  Let $\mu\in S_{\lipfree{M}}$.
  Then the following are equivalent:
  \begin{enumerate}
  \item\label{item:norm_b_1}
    Every slice of $B_{\lipfree{M}}$ that contains $\mu$ also
    contains molecules $m_{uv}$ for arbitrarily small $d(u,v)$.
  \item\label{item:norm_b_2}
    $\norm{\mu}_{\lipfree{M,b_{\alpha,\varepsilon}}} = 1-\alpha$
    for all $\alpha \in (0,1)$ and all $\varepsilon>0$.
  \item\label{item:norm_b_3}
    $\norm{\mu}_{\lipfree{M,b_\alpha}} = 1-\alpha$
    for all $\alpha \in (0,1)$.
  \end{enumerate}
\end{lem}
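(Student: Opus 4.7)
The plan is to establish (2) $\Leftrightarrow$ (3) directly from Lemma~\ref{lem:norm_b_limit}, and then to derive (1) $\Leftrightarrow$ (2) via a duality argument, the bridge being an explicit description of the unit ball of $\Lip_0(M, b_{\alpha,\varepsilon})$.

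For (2) $\Leftrightarrow$ (3), the sandwich inequalities stated just before Lemma~\ref{lem:norm_b_limit} give $1 - \alpha \leq \|\mu\|_{\lipfree{M, b_{\alpha,\varepsilon}}} \leq \|\mu\|_{\lipfree{M, b_\alpha}} \leq 1$ for every $\alpha \in (0,1)$ and $\varepsilon > 0$. If (2) holds then Lemma~\ref{lem:norm_b_limit} gives $\|\mu\|_{\lipfree{M, b_\alpha}} = \lim_{\varepsilon \to 0}\|\mu\|_{\lipfree{M, b_{\alpha,\varepsilon}}} = 1 - \alpha$, yielding (3), and conversely (3) forces $\|\mu\|_{\lipfree{M, b_{\alpha,\varepsilon}}} = 1 - \alpha$ by the sandwich.

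The bulk of the work is (1) $\Leftrightarrow$ (2). The key observation is that $f \in B_{\Lip_0(M, b_{\alpha,\varepsilon})}$ if and only if $|f(x) - f(y)| \leq w_{\alpha,\varepsilon}(x,y)$ for every $x, y \in M$: one direction follows from $b_{\alpha,\varepsilon} \leq w_{\alpha,\varepsilon}$, and the other by a telescoping estimate along any finite sequence nearly realizing the infimum in the definition of $b_{\alpha,\varepsilon}$. Writing
\begin{equation*}
L_\varepsilon(f) := \sup\{f(m_{uv}) : 0 < d(u,v) < \varepsilon\},
\end{equation*}
and noting $L_\varepsilon(f) = \sup\{|f(m_{uv})| : 0 < d(u,v) < \varepsilon\}$ by symmetry in $u,v$, this characterization becomes $B_{\Lip_0(M, b_{\alpha,\varepsilon})} = \{f \in B_{\Lip_0(M)} : L_\varepsilon(f) \leq 1-\alpha\}$. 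By duality,
\begin{equation*}
\|\mu\|_{\lipfree{M, b_{\alpha,\varepsilon}}} = \sup\{f(\mu) : f \in B_{\Lip_0(M)},\ L_\varepsilon(f) \leq 1 - \alpha\}.
\end{equation*}

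With this in hand both implications are straightforward. For (2) $\Rightarrow$ (1): given a slice $\{\nu : g(\nu) > 1 - \alpha\}$ containing $\mu$ (so $g \in S_{\Lip_0(M)}$ with $g(\mu) > 1-\alpha$), if $L_\varepsilon(g) \leq 1-\alpha$ then $g \in B_{\Lip_0(M, b_{\alpha,\varepsilon})}$ and (2) yields $g(\mu) \leq 1-\alpha$, a contradiction; hence $L_\varepsilon(g) > 1-\alpha$, which by the definition of supremum produces $u,v$ with $d(u,v) < \varepsilon$ and $m_{uv}$ in the slice. For (1) $\Rightarrow$ (2): assuming $\|\mu\|_{\lipfree{M, b_{\alpha,\varepsilon}}} > 1-\alpha$, pick $f \in B_{\Lip_0(M, b_{\alpha,\varepsilon})}$ with $f(\mu) > 1-\alpha$, rescale to $g := f/\lipnorm{f} \in S_{\Lip_0(M)}$ (note $\lipnorm{f} \geq f(\mu) > 0$ since $\|\mu\|_{\lipfree{M}} = 1$), and apply (1) to the slices $\{\nu : g(\nu) > g(\mu) - \delta\}$ as $\delta \to 0^+$ to obtain $L_\varepsilon(g) \geq g(\mu)$; positive homogeneity then gives $L_\varepsilon(f) = \lipnorm{f} \cdot L_\varepsilon(g) \geq f(\mu) > 1-\alpha$, contradicting $L_\varepsilon(f) \leq 1-\alpha$. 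The main technical obstacle is the description of $B_{\Lip_0(M, b_{\alpha,\varepsilon})}$; after that, the equivalence reduces to routine duality.
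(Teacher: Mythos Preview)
Your proof is correct. The organization differs from the paper's, and the difference is worth noting. The paper proves a cycle \ref{item:norm_b_1} $\Rightarrow$ \ref{item:norm_b_2} $\Rightarrow$ \ref{item:norm_b_3} $\Rightarrow$ \ref{item:norm_b_1}; its step \ref{item:norm_b_3} $\Rightarrow$ \ref{item:norm_b_1} works by showing $\lipnorm{f}_{\Lip_0(M,b_\alpha)}>1$, extracting a pair $x,y$ with $f(x)-f(y)>b_{\alpha,\varepsilon}(x,y)$, choosing a near-optimal chain $p_0,\ldots,p_{n+1}$ from the definition of $b_{\alpha,\varepsilon}$, and then splitting the edges into short ($d<\varepsilon$) and long ($d\geq\varepsilon$) to locate a short edge on which $f$ exceeds $1-\alpha$. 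You instead isolate the identity
\[
B_{\Lip_0(M,b_{\alpha,\varepsilon})}=\{f\in B_{\Lip_0(M)}:\ L_\varepsilon(f)\leq 1-\alpha\}
\]
as a standalone fact (via the telescoping argument), after which both directions of \ref{item:norm_b_1} $\Leftrightarrow$ \ref{item:norm_b_2} become one-line duality computations. This is the same telescoping idea repackaged, but the repackaging pays: your \ref{item:norm_b_2} $\Rightarrow$ \ref{item:norm_b_1} avoids the paper's case split $I_1,I_2$ entirely, and the characterization of $B_{\Lip_0(M,b_{\alpha,\varepsilon})}$ is a clean statement of independent interest.

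Two small remarks. In your \ref{item:norm_b_2} $\Rightarrow$ \ref{item:norm_b_1} you tacitly use the given slice width $\alpha$ as the parameter in \ref{item:norm_b_2}, which requires $\alpha\in(0,1)$; the paper invokes \cite[Lemma~2.1]{zbMATH02168839} to reduce to this case, and you should too. In your \ref{item:norm_b_1} $\Rightarrow$ \ref{item:norm_b_2}, the rescaling $g=f/\lipnorm{f}$ and the $\delta\to 0$ limit are unnecessary: since a slice is just a nonempty intersection with an open half-space, \ref{item:norm_b_1} applies directly to $\{\nu:f(\nu)>1-\alpha\}$ and gives $L_\varepsilon(f)>1-\alpha$ in one step (this is how the paper argues \ref{item:norm_b_1} $\Rightarrow$ \ref{item:norm_b_2}).
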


\begin{proof}
  \ref{item:norm_b_1} $\Rightarrow$ \ref{item:norm_b_2}.
  Assume that \ref{item:norm_b_1} holds and suppose
  $\|\mu\|_{\lipfree{M,b_{\alpha,\varepsilon}}} > 1 - \alpha$
  for some $\varepsilon > 0$ and $\alpha \in (0,1)$.
  Then there exists
  $h \in S_{\Lip_0(M,b_{\alpha,\varepsilon})} \subset B_{\Lip_0(M)}$
  such that $h(\mu) > 1 - \alpha$.
  By assumption there exists a molecule $m_{uv}$
  in $\mathcal{F}(M)$ such that $h(m_{uv}) > 1 - \alpha$
  and $d(u,v) < \varepsilon$.
  Thus
  \begin{equation*}
    b_{\alpha,\varepsilon}(u,v) \ge h(u)-h(v)> (1-\alpha)d(u,v)
    = w_{\alpha,\varepsilon}(u,v) \ge b_{\alpha,\varepsilon}(u,v) .
  \end{equation*}
  This contradiction proves \ref{item:norm_b_2}.

  \ref{item:norm_b_2} $\Rightarrow$ \ref{item:norm_b_3}
  follows from Lemma~\ref{lem:norm_b_limit}.

  \ref{item:norm_b_3} $\Rightarrow$ \ref{item:norm_b_1}.
  Assume that \ref{item:norm_b_3} holds and
  fix $\varepsilon > 0$
  and a slice $S(f, \alpha)$
  such that $\mu\in S(f,\alpha)$,
  where $f \in S_{\Lip_0(M)}$ and $\alpha > 0$.
  By \cite[Lemma 2.1]{zbMATH02168839}, we may assume $\alpha\in (0,1)$.
  From \ref{item:norm_b_3} we have
  \begin{equation*}
    \|f\|_{\Lip_0(M,b_\alpha)}
    \ge
    \frac{f(\mu)}{\|\mu\|_{\lipfree{M,b_\alpha}}}
    >
    \frac{1 - \alpha}{1 - \alpha}
    =
    1.
  \end{equation*}
  Thus there exist $x,y\in M$ such that
  \begin{equation*}
    f(x)-f(y) > b_\alpha(x,y) \geq b_{\alpha,\varepsilon}(x,y).
  \end{equation*}
  By the definition of $b_{\alpha,\varepsilon}(x,y)$ we can find
  $p_0, p_1, \ldots, p_{n+1}\in M$
  such that $p_0 = x, p_{n+1} = y$ and
  \begin{equation*}
    f(x) - f(y)
    >
    \sum_{i=0}^n w_{\alpha,\varepsilon}(p_i,p_{i+1}).
  \end{equation*}
  Let
  $I_1 = \big\{ i \in\{0,\ldots,n\} :
  d(p_i,p_{i+1}) < \varepsilon \big\}$
  and let
  $I_2=\{0,\ldots,n\}\setminus I_1$.
  Then
  \begin{align*}
    (1-\alpha)
    \sum_{i\in I_1} d(p_i,p_{i+1}) + \sum_{i\in I_2} d(p_i,p_{i+1})
    &=
    \sum_{i=0}^n w_{\alpha,\varepsilon}(p_i,p_{i+1})\\
    &<
    f(x)-f(y)\\
    &=
    \sum_{i=0}^n \big( f(p_i) - f(p_{i+1}) \big)\\
    &\le
    \sum_{i\in I_1} \big( f(p_i) - f(p_{i+1} ) \big)
    +
    \sum_{i\in I_2} d(p_i,p_{i+1})
  \end{align*}
  and therefore there exists $i\in I_1$ such that
  $f(p_i) - f(p_{i+1}) > (1 - \alpha) d(p_i,p_{i+1})$.
  Since we have also $d(p_i,p_{i+1}) < \varepsilon$,
  we conclude that \ref{item:norm_b_1} holds with $u=p_i$, $v=p_{i+1}$.
\end{proof}

We are now in a position to prove our characterization of $\Delta$-molecules.

\begin{thm}\label{thm:delta_molecules}
  Let $x\neq y\in M$.
  Then $m_{xy}$ is a $\Delta$-point of $\lipfree{M}$
  if and only if
  $x$ and $y$ are discretely connectable in $M$.
\end{thm}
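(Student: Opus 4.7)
My plan is to derive Theorem~\ref{thm:delta_molecules} as an almost immediate consequence of the lemmas already assembled, by splitting it into the two implications and feeding each into the appropriate tool from the preceding pages.

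For the sufficiency direction, assume $x$ and $y$ are discretely connectable. I would apply Proposition~\ref{prop:discr_conn} to $\mu = m_{xy}$ with the trivial one-term representation $\mu = 1\cdot m_{xy}$: this has $\sum_k|a_k| = 1 < 1+\eta$ for every $\eta>0$, and the only pair in play, namely $(x,y)$, is discretely connectable by hypothesis. The proposition then delivers directly that $m_{xy}$ is a $\Delta$-point of $\lipfree{M}$.

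For the necessity direction, suppose $m_{xy}$ is a $\Delta$-point. By \cite[Theorem~4.7]{JungRueda} (the slice characterization of $\Delta$-molecules), every slice of $B_{\lipfree{M}}$ containing $m_{xy}$ also contains molecules $m_{uv}$ with $d(u,v)$ arbitrarily small, i.e.\ condition \ref{item:norm_b_1} of Lemma~\ref{lem:norm_b} is satisfied. The implication \ref{item:norm_b_1}$\Rightarrow$\ref{item:norm_b_3} of that lemma then gives
\begin{equation*}
  \norm{m_{xy}}_{\lipfree{M,b_\alpha}} = 1-\alpha \qquad \text{for every } \alpha\in(0,1).
\end{equation*}
Since $\norm{\delta(x)-\delta(y)}_{\lipfree{M,b_\alpha}} = b_\alpha(x,y)$, the left-hand side equals $b_\alpha(x,y)/d(x,y)$, so we obtain $b_\alpha(x,y) = (1-\alpha)d(x,y)$. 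An application of Lemma~\ref{lem:properties_of_b}~\ref{item:prop_of_b_4} then forces $x$ and $y$ to be discretely connectable in $M$.

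Essentially all of the substantive work has already been done in the auxiliary constructions: building the equivalent metrics $b_{\alpha,\varepsilon}$ and $b_\alpha$, proving the continuity of the norm in $\varepsilon$ (Lemma~\ref{lem:norm_b_limit}), and linking the slice-level condition to the norm $\norm{\cdot}_{\lipfree{M,b_\alpha}}$ (Lemma~\ref{lem:norm_b}). Once those are in place, there is no remaining obstacle; the theorem is obtained purely by chaining together Proposition~\ref{prop:discr_conn} with the lemmas above, combined with the Jung--Rueda slice characterization of $\Delta$-molecules. In particular, no new metric or Banach-space argument is required at this final step.
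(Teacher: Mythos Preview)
Your proposal is correct and follows essentially the same route as the paper's own proof: one direction is Proposition~\ref{prop:discr_conn} applied to the trivial representation of $m_{xy}$, and the other chains \cite[Theorem~4.7]{JungRueda} through Lemma~\ref{lem:norm_b} (\ref{item:norm_b_1}$\Rightarrow$\ref{item:norm_b_3}) to obtain $b_\alpha(x,y)=(1-\alpha)d(x,y)$ and then invokes Lemma~\ref{lem:properties_of_b}~\ref{item:prop_of_b_4}.
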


\begin{proof}
  One implication follows immediately from Proposition~\ref{prop:discr_conn}.
  For the converse, suppose that $m_{xy}$ is a $\Delta$-point
  and fix $\alpha\in (0,1)$.
  Then $\mu=m_{xy}$ satisfies condition~\ref{item:norm_b_1}
  from Lemma~\ref{lem:norm_b} by \cite[Theorem~4.7]{JungRueda}, so it
  also satisfies \ref{item:norm_b_3} and
  \begin{equation*}
    1-\alpha
    =
    \norm{m_{xy}}_{\lipfree{M,b_\alpha}}
    =
    \norm{\frac{\delta(x)-\delta(y)}{d(x,y)}}_{\lipfree{M,b_\alpha}}
    =
    \frac{b_\alpha(x,y)}{d(x,y)}.
  \end{equation*}
  That is, $b_\alpha(x,y)=(1-\alpha)d(x,y)$.
  Now Lemma~\ref{lem:properties_of_b}~\ref{item:prop_of_b_4}
  shows that $x$ and $y$ are discretely connectable.
\end{proof}

\begin{cor}\label{cor:cmsp-no-delta-mpq}
  Let $M$ be a proper metric space and $x\neq y\in M$.
  Then $m_{xy}$ is a $\Delta$-point of $\lipfree{M}$
  if and only if
  $x$ and $y$ are connected with a geodesic.
\end{cor}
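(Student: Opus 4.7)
The plan is to derive this corollary by simply concatenating two results that are already available in the paper. The first is Theorem~\ref{thm:delta_molecules}, which characterizes $\Delta$-molecules in $\lipfree{M}$ (for arbitrary metric spaces $M$) as exactly those $m_{xy}$ for which $x$ and $y$ are discretely connectable. The second is Proposition~\ref{prop:compact_connectability}, which says that when $M$ is proper, discrete connectability of two distinct points is equivalent to the existence of a geodesic joining them.

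So the proof is essentially one line: assuming $M$ is proper and $x\neq y$, Theorem~\ref{thm:delta_molecules} gives the equivalence
\begin{equation*}
  m_{xy} \text{ is a } \Delta\text{-point of } \lipfree{M}
  \iff
  x \text{ and } y \text{ are discretely connectable in } M,
\end{equation*}
and Proposition~\ref{prop:compact_connectability} upgrades the right-hand side to the existence of an isometric embedding of $[0,d(x,y)]$ into $M$ sending $0$ to $x$ and $d(x,y)$ to $y$, i.e. a geodesic between $x$ and $y$. Chaining the two equivalences gives the corollary.

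There is no real obstacle here, since both inputs have already been established. The only thing worth pointing out in the write-up is why properness is used exactly once, and only to turn a discrete approximate path into an honest geodesic: a sequence of finite $\varepsilon$-discrete paths between $x$ and $y$ lives in the closed ball $B(x,d(x,y)+1)$, which is compact by properness, so an ultrafilter (or diagonal) argument as in Proposition~\ref{prop:compact_connectability} extracts the limiting isometric parametrization. Conversely, a geodesic trivially yields $\varepsilon$-discrete paths for every $\varepsilon>0$ by sampling it finely enough, so discrete connectability is automatic in either direction of this second equivalence.
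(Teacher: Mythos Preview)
Your proposal is correct and matches the paper's approach exactly: the corollary is stated immediately after Theorem~\ref{thm:delta_molecules} with no proof, because it follows at once by combining that theorem with Proposition~\ref{prop:compact_connectability}, just as you describe.
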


By \cite[Corollary~4.5]{VeeorgStudia}, every convex sum (finite or infinite) of $\Delta$-molecules of $\lipfree{M}$ is again a $\Delta$-point. It is then natural to ask whether the converse holds.
Note that the question only makes sense for elements of $S_{\lipfree{M}}$ that are actually convex sums of molecules, which is not all of them in general (see \cite[Section 4]{APS}).
This question was raised explicitly in \cite[Problem~3]{VeeorgStudia} for those $\Delta$-points $\mu\in S_{\lipfree{M}}$ with finite support. In that case we have:
\begin{itemize}
\item $\mu$ can always be written as a convex sum of molecules (see e.g. \cite[Proposition~3.16]{Weaver2}), and
\item $\mu$ also satisfies  property~\ref{item:norm_b_1} from Lemma~\ref{lem:norm_b}, by \cite[Theorem~4.4]{VeeorgStudia}.
\end{itemize}
The techniques developed in this section allow us to answer the question in the positive.

\begin{thm}[cf. {\cite[Problem 3]{VeeorgStudia}}]
  \label{thm:delta_finite_conv_comb}
  Suppose that $\mu\in S_{\lipfree{M}}$
  is a $\Delta$-point with finite support.
  Then $\mu$ can be written as a finite convex combination
  of $\Delta$-molecules in $S_{\lipfree{M}}$.
\end{thm}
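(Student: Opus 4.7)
The plan is to combine the alternative metric $b_\alpha$ from Lemma~\ref{lem:properties_of_b} with Weaver's optimal molecular representation for Lipschitz-free spaces over finite metric spaces. Set $S := \supp(\mu) \cup \set{0}$, a finite subset of $M$. Since $\mu$ is a finitely supported $\Delta$-point, by \cite[Theorem~4.4]{VeeorgStudia} it satisfies condition~\ref{item:norm_b_1} of Lemma~\ref{lem:norm_b}, so for any fixed $\alpha \in (0,1)$ we have $\norm{\mu}_{\lipfree{(M, b_\alpha)}} = 1 - \alpha$. Since $(S, b_\alpha|_S)$ embeds isometrically as a pointed subset of $(M, b_\alpha)$, the induced inclusion $\lipfree{(S, b_\alpha|_S)} \hookrightarrow \lipfree{(M, b_\alpha)}$ is an isometry, and therefore
$$
\norm{\mu}_{\lipfree{(S, b_\alpha|_S)}} = 1-\alpha.
$$

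Because $S$ is finite, I would apply Weaver's molecular formula (e.g.\ \cite[Proposition~3.16]{Weaver2}) in the finite space $(S, b_\alpha|_S)$ to obtain an optimal representation
$$
\mu = \sum_{k=1}^n c_k \cdot \frac{\delta(x_k) - \delta(y_k)}{b_\alpha(x_k, y_k)}
$$
with $c_k > 0$, $x_k \neq y_k$ in $S$, and $\sum_k c_k = 1 - \alpha$. Rewriting in terms of the original $d$-molecules gives $\mu = \sum_k \lambda_k m_{x_k y_k}$ with $\lambda_k := c_k \, d(x_k, y_k) / b_\alpha(x_k, y_k)$. From the sandwich $(1-\alpha)d \leq b_\alpha \leq d$ in Lemma~\ref{lem:properties_of_b}\ref{item:prop_of_b_2} we read off $c_k \leq \lambda_k \leq c_k/(1-\alpha)$, so $\sum_k \lambda_k \leq 1$. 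On the other hand, the very same identity expresses $\mu$ as a combination of unit molecules in $\lipfree{M}$, so $\sum_k \lambda_k \geq \norm{\mu}_{\lipfree{M}} = 1$.

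The crucial step is the squeeze: both inequalities are now equalities, which forces $\lambda_k = c_k/(1-\alpha)$, i.e.\ $b_\alpha(x_k, y_k) = (1-\alpha) d(x_k, y_k)$, for every $k$. By Lemma~\ref{lem:properties_of_b}\ref{item:prop_of_b_4} each pair $x_k, y_k$ is then discretely connectable in $M$, and by Theorem~\ref{thm:delta_molecules} each $m_{x_k y_k}$ is a $\Delta$-point of $\lipfree{M}$. Since $\sum_k \lambda_k = 1$ with $\lambda_k > 0$, this exhibits $\mu$ as a finite convex combination of $\Delta$-molecules, as required. The main obstacle is ensuring that the Weaver representation in the auxiliary metric $b_\alpha$ really transfers cleanly back to the original metric; this is exactly what the sandwich argument achieves, and it hinges on the precise value $\norm{\mu}_{\lipfree{(M, b_\alpha)}} = 1-\alpha$ delivered by Lemma~\ref{lem:norm_b}.
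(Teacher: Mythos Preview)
Your proof is correct and follows essentially the same route as the paper: both use Lemma~\ref{lem:norm_b} together with \cite[Theorem~4.4]{VeeorgStudia} to get $\norm{\mu}_{\lipfree{M,b_\alpha}}=1-\alpha$, take an optimal molecular representation of $\mu$ with respect to $b_\alpha$ via \cite[Proposition~3.16]{Weaver2}, and then run the same squeeze against $\norm{\mu}_{\lipfree{M,d}}=1$ to force $b_\alpha(x_k,y_k)=(1-\alpha)d(x_k,y_k)$ for every $k$. The only cosmetic differences are your explicit passage to the finite subspace $(S,b_\alpha|_S)$ and your normalization $\sum_k c_k=1-\alpha$ versus the paper's $\sum_i\lambda_i=1$ for $\mu/(1-\alpha)$.
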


\begin{proof}
  Fix $\alpha\in (0,1)$.
  By Lemma~\ref{lem:norm_b} and \cite[Theorem~4.4]{VeeorgStudia}
  we have
  $\norm{\mu}_{\lipfree{M,b_\alpha}} = 1-\alpha$.
  Thus, since $\mu$ is a finitely supported element of
  $\lipfree{M,b_\alpha}$,
  we may write $\mu/\norm{\mu}_{\lipfree{M,b_\alpha}}$ as a finite convex combination
  of $b_\alpha$-molecules (e.g. by \cite[Proposition~3.16]{Weaver2}).
  That is,
  \begin{equation*}
    \frac{\mu}{1-\alpha}
    =
    \sum_{i=1}^n \lambda_i \frac{\delta(x_i)-\delta(y_i)}{b_\alpha(x_i,y_i)}
  \end{equation*}
  for some $x_i \neq y_i \in M$
  and $\lambda_i > 0$ such that
  $\sum_{i=1}^n \lambda_i = 1$.
  Then
  \begin{align*}
    1
    =
    \norm{\mu}_{\lipfree{M,d}}
    &=
    (1-\alpha) \norm{\sum_{i=1}^n \lambda_i
      \frac{d(x_i, y_i)}{b_\alpha(x_i,y_i)}
      \frac{\delta(x_i) - \delta(y_i)}{d(x_i,y_i)}
    }_{\lipfree{M,d}} \\
    &\leq
    \sum_{i=1}^n \lambda_i\cdot (1-\alpha) \frac{d(x_i,y_i)}{b_\alpha(x_i,y_i)}
    \leq
    \sum_{i=1}^n \lambda_i\cdot\frac{1-\alpha}{1-\alpha}
    =
    1
  \end{align*}
  and so all inequalities are actually equalities.
  In particular $\mu=\sum_{i=1}^n \lambda_i m_{x_iy_i}$
  is a finite convex combination of molecules such that
  $b_\alpha(x_i,y_i) = (1 - \alpha)d(x_i,y_i)$ for all $i$.
  By Lemma~\ref{lem:properties_of_b}~\ref{item:prop_of_b_4}
  and Theorem~\ref{thm:delta_molecules},
  this means that each $m_{x_iy_i}$ is a $\Delta$-point.
\end{proof}

We finish by remarking that the existence of $\Delta$-points in
$\lipfree{M}$ does not necessarily imply the existence of
$\Delta$-molecules in general.
For instance, if $M$ is the Smith--Volterra--Cantor set then
$\lipfree{M}$ is isometric to $L_1\oplus_1\ell_1$
by the proof of \cite[Corollary 3.4]{Godard},
which admits $\Delta$-points since $L_1$ does.
However, $M$ is compact and totally disconnected,
so $\lipfree{M}$ cannot contain $\Delta$-molecules.
In particular, the converse of Proposition \ref{prop:discr_conn} does not hold.

\section*{Acknowledgments}
\label{sec:ack}

This work was supported by: \begin{itemize}

  \item the AURORA mobility programme (project
  number: 309597) from the Norwegian Research Council and the ``PHC
  Aurora'' program (project number: 45391PF), funded by the French
  Ministry for Europe and Foreign Affairs, the French Ministry for
  Higher Education, Research and Innovation;

  \item the Grant BEST/2021/080 funded by the Generalitat Valenciana, Spain, 
  and by Grant PID2021-122126NB-C33 
  funded by MCIN/AEI/10.13039/501100011033 and by 
  ``ERDF A way of making Europe'';

  \item the Estonian Research Council grants PRG 1901 and SJD58;

  \item the French ANR project No. ANR-20-CE40-0006.
  
  \end{itemize}

Parts of this research were conducted while
T. A. Abrahamsen, R. J. Aliaga and V. Lima visited
the Laboratoire de Math\'ematiques de Besan\c{c}on in 2021 and
while R. J. Aliaga visited the
Institute of Mathematics and Statistics at the University of Tartu 
in 2022, for which they wish to express their gratitude.

\newcommand{\etalchar}[1]{$^{#1}$}
\providecommand{\bysame}{\leavevmode\hbox to3em{\hrulefill}\thinspace}
\providecommand{\MR}{\relax\ifhmode\unskip\space\fi MR }
\providecommand{\MRhref}[2]{%
  \href{http://www.ams.org/mathscinet-getitem?mr=#1}{#2}
}
\providecommand{\href}[2]{#2}

\end{document}